\newcommand{\lvt}{\left|\kern-1.35pt\left|\kern-1.3pt\left|}
\newcommand{\rvt}{\right|\kern-1.3pt\right|\kern-1.35pt\right|}
\renewcommand*\env@matrix[1][*\c@MaxMatrixCols c]{%
 \hskip -\arraycolsep
 \let\@ifnextchar\new@ifnextchar
 \array{#1}}
\theoremstyle{plain}
\newtheorem{teo}{Theorem}[section]
\newtheorem{coro}[teo]{Corollary}
\newtheorem{lemma}[teo]{Lemma}
\newtheorem{pro}[teo]{Proposition}
\theoremstyle{defi}
\theoremstyle{remark}
\newtheorem{rem}[teo]{Remark}
\newcommand{\ii}{\operatorname{i}}
\renewcommand{\d}{\operatorname{d}}
\renewcommand{\Re}{\operatorname{Re}}
\newcommand{\C}{\mathbb{C}}
\newcommand{\R}{\mathbb{R}}
\newcommand{\sz}[1]{\left| \vec{#1} \right|}
\DeclareRobustCommand{\gaussk}{\DOTSB\gaussk@\slimits@}
\newcommand{\gaussk@}{\mathop{\vphantom{\sum}\mathpalette\bigcal@{K}}}
\newcommand{\bigcal@}[2]{%
 \vcenter{\m@th
 \sbox\z@{\( #1\sum\)}%
 \dimen@=\dimexpr\ht\z@+\dp\z@
 \hbox{\resizebox{!}{0.8\dimen@}{\( \mathcal{K}\)}}%
 }%
}
\newcommand{\cfracplus}{\mathbin{\cfracplus@}}
\newcommand{\cfracplus@}{%
 \sbox\z@{\( \dfrac{1}{1} \)}%
 \sbox\tw@{\( + \)}%
 \raisebox{\dimexpr\dp\tw@-\dp\z@\relax}{\(+\)}%
}
\newcommand{\cfracdots}{\mathord{\cfracdots@}}
\newcommand{\cfracdots@}{%
 \sbox\z@{\(\dfrac{1}{1}\)}%
 \sbox\tw@{\(+\)}%
 \raisebox{\dimexpr\dp\tw@-\dp\z@\relax}{\( \cdots\)}%
}
\newcommand*{\relrelbarsep}{.386ex}
\newcommand*{\relrelbar}{%
 \mathrel{%
 \mathpalette\@relrelbar\relrelbarsep
 }%
}
\newcommand*{\@relrelbar}[2]{%
 \raise#2\hbox to 0pt{\( \m@th#1\relbar\( \hss}%
 \lower#2\hbox{\( \m@th#1\relbar\)}%
}
\providecommand*{\rightrightarrowsfill@}{%
 \arrowfill@\relrelbar\relrelbar\rightrightarrows
}
\providecommand*{\leftleftarrowsfill@}{%
 \arrowfill@\leftleftarrows\relrelbar\relrelbar
}
\providecommand*{\xrightrightarrows}[2][]{%
 \ext@arrow 0359\rightrightarrowsfill@{#1}{#2}%
}
\providecommand*{\xleftleftarrows}[2][]{%
 \ext@arrow 3095\leftleftarrowsfill@{#1}{#2}%
}
\newcommand*\pFqskip{8mu}
\newcommand*\pFq{\begingroup
 \catcode`\,\active
 \def ,{\mskip\pFqskip\relax}%
 \dopFq
}
\def\dopFq#1#2#3#4#5{%
 {}_{#1}F_{#2}\biggl[\genfrac..{0pt}{}{#3}{#4};#5\biggr]%
 \endgroup
}
\newcommand{\KF}[5]{F^{#1}_{#2}\left[{#3\atop #4}\Bigg\vert #5\right]}
\tikzstyle{block} = [draw, rectangle, 
\title[Integral and hypergeometric representations of multiple orthogonality]{Integral and hypergeometric representations \\
for multiple orthogonal polynomials 
}
\subjclass{42C05,33C45,33C47}
\keywords{Discrete multiple orthogonal polynomials, hypergeometric series, Kampé de Fériet series, Hahn, contour integral, Mellin transform, integral representation}
\author[Branquinho]{Amílcar Branquinho\( ^1\)}
\address{\( ^1\)CMUC, Departamento de Matem\'atica,
Universidade de Coimbra, Largo D. Dinis, 3000-143 Coimbra, Portugal}
\email{\(^1\)ajplb@mat.uc.pt}
\author[Díaz]{Juan EF Díaz\(^{2}\)}
\address{\(^2\)CIDMA, Departamento de Matemática, Universidade de Aveiro, 3810-193 Aveiro, Portugal}
\email{\(^2\)juan.enri@ua.pt}
\author[Foulquié]{Ana Foulquié-Moreno\(^3\)}
\address{\(^3\)CIDMA, Departamento de Matemática, Universidade de Aveiro, 3810-193 Aveiro, Portugal}
\email{\(^3\)foulquie@ua.pt}
\author[Mañas]{Manuel Mañas\(^4\)}
\address{\(^4\)Departamento de Física Teórica, Universidad Complutense de Madrid, 28040-Madrid, Spain}
\email{\(^4\)manuel.manas@ucm.es}
\author[Wolfs]{Thomas Wolfs\(^5\)}
\address{\(^5\)Department of Mathematics, KU Leuven, 3001 Leuven, Belgium}
\email{\(^5\)thomas.wolfs@kuleuven.be}
\begin{document}

\maketitle

\begin{abstract}
This paper addresses two primary objectives in the realm of classical multiple orthogonal polynomials with an arbitrary number of weights. Firstly, it establishes new and explicit hypergeometric expressions for type I Hahn multiple orthogonal polynomials.
Secondly, 
applying the residue theorem and the Mellin transform, the paper derives contour integral representations for several families of orthogonal polynomials. Specifically, it presents contour integral formulas for both type I and type II multiple orthogonal polynomials in the Laguerre of the first kind, Jacobi--Piñeiro, and Hahn families. The evaluation of these integrals leads to explicit hypergeometric representations.
\end{abstract}

\tableofcontents

\section{Introduction}

The first four coauthors embarked on a program to find explicit hypergeometric expressions for both the continuous and discrete versions of classical multiple orthogonal polynomials. Initially, in \cite{HahnI}, they identified all families under the Hahn polynomials in the multiple Askey scheme, except for the Hermite family, for the more simple case of two weights. This included the Jacobi--Piñeiro, Laguerre of the first and second kinds, Meixner of the first and second kinds, Kravchuk, and Charlier polynomials. The focus then shifted to an arbitrary number of weights. Formulas for Jacobi--Piñeiro and Laguerre of the first kind were presented in~\cite{HS:JP-L1}, while those for Laguerre of the second kind and Hermite multiple orthogonal polynomials of type~I were given in \cite{CMOPI}. Notably, explicit expressions for type I Hermite multiple orthogonal polynomials were previously unknown even for two weights. Additionally, the recursion coefficients for all these cases were determined explicitly.

However, finding the Hahn multiple orthogonal polynomials of type I for an arbitrary number of weights proved elusive for the initial group of coauthors. A recent shift in perspective led to the breakthrough presented in this paper: for the first time, explicit hypergeometric expressions for the type I Hahn polynomials are provided.

Simultaneously, the last coauthor, as part of his PhD thesis supervised by Walter Van Assche, developed a contour integral technique based on the residue theorem and the Mellin transform to derive families of multiple orthogonal polynomials of types I and II \cite{VAWolfs,Wolfs}. Using this technique, he independently found contour integral representations for the Hahn multiple orthogonal polynomials that led to the same hypergeometric expressions for the type I Hahn polynomials. It also enabled him to obtain hypergeometric expressions for the type II Hahn polynomials different than those in \cite{AskeyII}.

At the Arno Kuijlaars 60th anniversary meeting in Leuven, discussions among several coauthors revealed that  had independently arrived at the same novel result using different perspectives and methods. Encouraged by Professor Van Assche, we decided to combine these developments and present both techniques side by side.

The paper is structured as follows.
We conclude this introduction by presenting some preliminary facts essential for the development of this study. Specifically, we discuss fundamentals of multiple orthogonal polynomials, hypergeometric-type series, the Mellin transform, and certain aspects related to the Askey scheme in the context of multiple orthogonal polynomials. 
Section \ref{S:Hahn} focuses on the motivation and proof of one of the main results of this paper, Theorem \ref{HahnTypeITheorem}. This result provides for the first time explicit hypergeometric expressions for the type I Hahn multiple orthogonal polynomials.
In Section \ref{S: Integral}, we 
apply
the residue theorem and the Mellin transform to derive, in Theorems~\ref{HI_IR} and~\ref{HII_IR}, contour integral expressions for the Laguerre of the first kind, Jacobi--Piñeiro, and Hahn multiple orthogonal polynomials of both types~I and~II. These integral representations constitute another key result of the paper and lead to the hypergeometric explicit representations discussed earlier.

\subsection{Multiple Orthogonal Polynomials}
For the upcoming discussion, we require some basics about discrete multiple orthogonal polynomials \cite[Chapter~23.6]{Ismail}, see also \cite{nikishin_sorokin}. We begin by examining a system characterized by an arbitrary number, say \( p \), of weight functions, denoted as
\begin{align*}
w_1, \ldots, w_p: \Delta \subseteq \mathbb{R} \rightarrow \mathbb{R}_{\geq 0} .
\end{align*}In certain cases, a sequence of type II polynomials \( B_{\vec{n}} \) may exist, which are monic with \(\deg{B_{\vec{n}}} \leqslant |\vec{n}|\), satisfying the orthogonality relations 
\begin{align}
 \label{ortogonalidadTipoIIContinua}
\begin{aligned}
	 \int_{\Delta} x^j B_{\vec{n}}(x) w_i(x) \, \d x &= 0, & j &\in \{0, \ldots, n_i - 1\}, & i &\in \{1, \ldots, p\}.
\end{aligned}
\end{align}
Additionally, there are \( p \) sequences of type I polynomials, denoted as \( A^{(1)}_{\vec{n}}, \ldots, A^{(p)}_{\vec{n}} \), with \(\deg A^{(i)}_{\vec{n}} \leqslant n_i - 1\), meeting the orthogonality conditions
\begin{align}
 \label{ortogonalidadTipoIContinua}
 \sum_{i = 1}^p \int_{\Delta} x^j A^{(i)}_{\vec{n}}(x) w_i(x) \, \d x =
 \begin{cases}
 0, & \text{if } j \in \{0, \ldots, |\vec{n}| - 2\}, \\
 1, & \text{if } j = |\vec{n}| - 1.
 \end{cases}
\end{align}
In this context, \(\vec{n} = (n_1, \ldots, n_p) \in \mathbb{N}^p_0\) and \(|\vec{n}| \coloneqq n_1 + \cdots + n_p\). For this paper, we will use the notation \(\mathbb{N} \coloneqq \{1, 2, 3, \ldots\}\) and \(\mathbb{N}_0 \coloneqq \{0\} \cup \mathbb{N}\).

These conditions imply that the polynomials are uniquely defined and satisfy the biorthogonality conditions:
\begin{align*}
 \sum_{i = 1}^p \int_{ \Delta} B_{\vec{n}}(x) A^{(i)}_{\vec{m}}(x) w_i(x) \, \d x =
 \begin{cases}
 0, & \text{if } m_i \leqslant n_i, \; i \in \{1, \ldots, p\}, \\
 1, & \text{if } |\vec{m}| = |\vec{n}| + 1, \\
 0, & \text{if } |\vec{n}| + 1 < |\vec{m}|.
 \end{cases}
\end{align*}
Analogously, we can consider a system of discrete weight functions \(w_1,\ldots,w_p:\Delta\subseteq\mathbb Z\rightarrow\mathbb R_\geq\), leading to the discrete orthogonality relations
\begin{align}
 \nonumber
 \sum_{k\in\Delta} k^j B_{\vec{n}}(k) w_i(k) &= 0, \quad j \in \{0, \ldots, n_i - 1\}, \quad i \in \{1, \ldots, p\},
 \\
 \label{ortogonalidadTipoIDiscreta}
 \sum_{i = 1}^p \sum_{k\in\Delta} k^j A^{(i)}_{\vec{n}}(k) w_i(k) & =
 \begin{cases}
 0, & \text{if } j \in \{0, \ldots, |\vec{n}| - 2\}, \\
 1, & \text{if } j = |\vec{n}| - 1,
 \end{cases}
 \\
\nonumber
 \sum_{i = 1}^p \sum_{k\in \Delta} B_{\vec{n}}(k) A^{(i)}_{\vec{m}}(k) w_i(k) & =
 \begin{cases}
 0, & \text{if } m_i \leqslant n_i, \; i \in \{1, \ldots, p\}, \\
 1, & \text{if } |\vec{m}| = |\vec{n}| + 1, \\
 0, & \text{if } |\vec{n}| + 1 < |\vec{m}|.
 \end{cases}
\end{align}

\subsection{Hypergeometric Functions}

It's important to recall that, in many cases, these polynomials can be represented using generalized hypergeometric series \cite{andrews}:
\begin{align*}
 \pFq{p}{q}{a_1, \ldots, a_p}{\alpha_1, \ldots, \alpha_q}{x} \coloneqq \sum_{l=0}^{\infty} \dfrac{(a_1)_l \cdots (a_p)_l}{(\alpha_1)_l \cdots (\alpha_q)_l} \dfrac{x^l}{l!}.
\end{align*}
In other cases, it is necessary to go a step further and use the Kampé de Fériet series \cite{andrews, slater, Srivastava}:
\begin{multline}
 \label{KF}
 \KF{p:r;s}{q:n;k}{(a_1, \ldots, a_p):(b_1, \ldots, b_r);(c_1, \ldots, c_s)}{(\alpha_1, \ldots, \alpha_q):(\beta_1, \ldots, \beta_n);(\gamma_1, \ldots, \gamma_k)}{x, y}
\\
 \coloneqq \sum_{l=0}^{\infty} \sum_{m=0}^{\infty} \dfrac{(a_1)_{l+m} \cdots (a_p)_{l+m}}{(\alpha_1)_{l+m} \cdots (\alpha_q)_{l+m}} \dfrac{(b_1)_l \cdots (b_r)_l}{(\beta_1)_l \cdots (\beta_n)_l} \dfrac{(c_1)_m \cdots (c_s)_m}{(\gamma_1)_m \cdots (\gamma_k)_m} \dfrac{x^l}{l!} \dfrac{y^m}{m!}.
\end{multline}
Here, \((x)_n\), \(x \in \mathbb{C}\) and \(n \in \mathbb{N}_0\), known as the Pochhammer symbol, is defined as:
\begin{align*}
 (x)_n = \dfrac{\Gamma(x+n)}{\Gamma(x)} = \begin{cases}
 x (x+1) \cdots (x+n-1), & \text{if } n \in \mathbb{N}, \\
 1, & \text{if } n = 0.
 \end{cases}
\end{align*}
Recall that for \(\operatorname{Re} (z)>0\), the Eulerian gamma function has the integral representation:
\begin{align*}
	\Gamma (z)=\int _{0}^{\infty }t^{z-1}\operatorname e^{-t} \, \d t.
\end{align*}
The gamma function is subsequently defined as the analytic continuation of this integral to a meromorphic function that is analytic across the entire complex plane, except at zero and the negative integers, where it exhibits simple poles. The asymptotic Stirling formula for the  gamma function 
\begin{equation}\label{eq:Stirling}
\Gamma (z)=\sqrt {\frac {2\pi }{z}}
\left(\frac {z}{\operatorname e}\right)^z
\left(1+\operatorname{O}\left({\frac {1}{z}}\right)\right).
\end{equation}
is valid for  $|z|\to \infty $ in the sector  $|\arg(z)| < \pi - \epsilon$, $\epsilon>0$.

The Chu--Vandermode formula
\begin{align} \label{Chu-Van}
	(a+b)_n = \sum_{k=0}^n \binom{n}{k} (a)_k (b)_{n-k} ,
\end{align}
will be relevant later.

To establish the results related to the type I polynomials, we will rely on several results involving these functions. In order to deduce the guess for the main result, we will need the Kummer transformation for the \({}_3F_2\) function:
\begin{align}
 \label{Kummer}
 \pFq{3}{2}{a_1, a_2, a_3}{b_1, b_2}{1} 
 = \dfrac{\Gamma(b_2) \Gamma(b_1 + b_2 - a_1 - a_2 - a_3)}{\Gamma(b_2 - a_1) \Gamma(b_1 + b_2 - a_2 - a_3)} \, \pFq{3}{2}{a_1, b_1 - a_2, b_1 - a_3}{b_1, b_1 + b_2 - a_2 - a_3}{1}, 
\end{align}
with \( \Re(b_2), \Re(b_1 + b_2 - a_1 - a_2 - a_3) > 0 \),
as well as the following reduction formula due to Rakha and Rathie, cf.~\cite[equation 3.20]{Rakha-Rathie}:
\begin{align}
 \label{RR}
 \KF{2:1;2}{2:0;1}{\alpha, \lambda: \epsilon; \beta - \epsilon, \gamma}{\beta, \mu: --; \delta}{1, 1} = \dfrac{\Gamma(\mu) \Gamma(\mu - \alpha - \lambda)}{\Gamma(\mu - \alpha) \Gamma(\mu - \lambda)} \, \pFq{4}{3}{\alpha, \lambda, \beta - \epsilon, \delta - \gamma}{\beta, \delta, 1 - \mu + \alpha + \lambda}{1},
\end{align}
which was already used to prove the main result in \cite{HahnI}.

Finally, the proof of the main result of this paper, which generalizes the one in \cite{HahnI}, will rely on the following reformulated version (cf. \cite[Lemma 1]{HS:JP-L1}) of a theorem by Karp and Prilepkina \cite[Theorem 2.2]{KP}.

\begin{lemma}\label{lemma:KP}
 Let \(a \in \mathbb{C}\), \(\vec{f} = (f_1, \ldots, f_r) \in \mathbb{C}^r\), \(\vec{b} = (b_1, \ldots, b_l) \in \mathbb{C}^l\) with all components distinct, \(\vec{m} = (m_1, \ldots, m_r) \in \mathbb{N}^r\), and \(\vec{k} = (k_1, \ldots, k_l) \in \mathbb{N}^l\). If \(\Re(k_1 + \cdots + k_l - a - m_1 - \cdots - m_r) > 0\), then
 \begin{multline}
 \label{KPTheoremReformulated}
 \pFq{r+l+1}{r+l}{a, f_1 + m_1, \ldots, f_r + m_r, b_1, \ldots, b_l}{f_1, \ldots, f_r, b_1 + k_1, \ldots, b_l + k_l}{1} \\
 = \Gamma(1 - a) \dfrac{(b_1)_{k_1} \cdots (b_l)_{k_l}}{(f_1)_{m_1} \cdots (f_r)_{m_r}} \sum_{q=1}^{l} \dfrac{(-1)^{k_q-1} \prod_{j=1}^{r} (\tilde{f}_j - m_j)_{m_j} \Gamma(b_q + k_q - 1)}{(k_q - 1)! \prod_{j=1, j \neq q}^{l} (b_j - b_q - k_q + 1)_{k_j} \Gamma(b_q + k_q - a)} \\
 \times \pFq{r+l+1}{r+l}{-k_q + 1, -b_q - k_q + 1 + a, \tilde{f}_1, \ldots, \tilde{f}_r, \vec{b}^{\ast q} - (b_q + k_q - 1) \vec{e}_{l-1}}{-b_q - k_q + 2, \tilde{f}_1 - m_1, \ldots, \tilde{f}_r - m_r, \vec{b}^{\ast q} + \vec{k}^{\ast q} - (b_q + k_q - 1) \vec{e}_{l-1}}{1},
 \end{multline}
 with \(\tilde{f}_j \coloneq f_j - b_q + 1 - k_q + m_j\), \(j = 1, \ldots, r\).
\end{lemma}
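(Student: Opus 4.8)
The plan is to prove the identity by splitting the general term of the left-hand series according to the partial–fraction decomposition of its rational factor, and summing the two resulting pieces by the Chu--Vandermonde/Karlsson--Minton vanishing phenomenon and by Gauss's ${}_2F_1$ summation respectively; this is exactly the mechanism underlying \cite[Theorem~2.2]{KP}, so the argument can equivalently be read as the specialisation of that theorem to the present parameters. Throughout I would take the $b_i$ in general position, so that the numbers $b_i+t$ with $0\le t\le k_i-1$, $1\le i\le l$, are pairwise distinct and have positive real parts, the general case following by continuity; and I would first establish the identity on the strip $\Re(a)<1$, where all the manipulations below converge, and afterwards extend it to the whole region $\Re(k_1+\dots+k_l-a-m_1-\dots-m_r)>0$ by analytic continuation in $a$.

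First I would rewrite the summand. From $(f_j+m_j)_n/(f_j)_n=(f_j+n)_{m_j}/(f_j)_{m_j}$ and $(b_i)_n/(b_i+k_i)_n=(b_i)_{k_i}/(b_i+n)_{k_i}$ the left-hand side of \eqref{KPTheoremReformulated} equals
\[
\frac{(b_1)_{k_1}\cdots(b_l)_{k_l}}{(f_1)_{m_1}\cdots(f_r)_{m_r}}\sum_{n\ge0}\frac{(a)_n}{n!}\,\frac{\prod_{j=1}^r(f_j+n)_{m_j}}{\prod_{i=1}^l(b_i+n)_{k_i}}.
\]
Then I would expand $n\mapsto \prod_j(f_j+n)_{m_j}\big/\prod_i(b_i+n)_{k_i}$ into partial fractions: a polynomial $P(n)$ of degree $m_1+\dots+m_r-k_1-\dots-k_l$ (absent when this is negative) plus $\sum_{q=1}^l\sum_{t=0}^{k_q-1}A_{q,t}/(b_q+t+n)$, where $A_{q,t}$ is an explicit quotient of Pochhammer symbols and factorials — the factor $(-1)^t\,t!\,(k_q-1-t)!$ coming from the remaining linear factors of $\prod_i(b_i+n)_{k_i}$ at $n=-(b_q+t)$ being responsible for the sign $(-1)^{k_q-1}$ in the statement.

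The two pieces are summed separately. For the polynomial part I would use that, expanding $P$ in falling factorials, $\sum_{n\ge0}\frac{(a)_n}{n!}\,n(n-1)\cdots(n-j+1)=(a)_j\sum_{m\ge0}\frac{(a+j)_m}{m!}=(a)_j\lim_{z\to1^-}(1-z)^{-(a+j)}=0$ whenever $\Re(a+j)<0$; since the hypothesis forces $\Re(a+m_1+\dots+m_r-k_1-\dots-k_l)<0$, the whole polynomial part contributes $0$. For each simple–pole piece the Euler integral gives $\sum_{n\ge0}\frac{(a)_n}{n!\,(b_q+t+n)}=\int_0^1 x^{b_q+t-1}(1-x)^{-a}\,\d x=\Gamma(1-a)\,\Gamma(b_q+t)/\Gamma(b_q+t+1-a)$, i.e. Gauss's ${}_2F_1(1)$ summation. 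Substituting the explicit $A_{q,t}$, pulling out $\Gamma(1-a)$, and reindexing $t\mapsto u:=k_q-1-t$, I would check that the $u=0$ summand is exactly the prefactor multiplying the $q$-th hypergeometric function in \eqref{KPTheoremReformulated} — this is where $\tilde f_j=f_j-b_q-k_q+1+m_j$ enters, through $\tilde f_j-m_j=f_j-b_q-k_q+1$ — and that the ratio of consecutive summands reproduces the term ratio of the ${}_{r+l+1}F_{r+l}$ on the right, the upper parameter $-k_q+1$ accounting for the truncation of the $u$-sum at $k_q-1$ and the vectors $\vec b^{\ast q}$, $\vec k^{\ast q}$, $\vec e_{l-1}$ collecting the contributions of the remaining poles $b_i+t$, $i\neq q$. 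Summing over $q$ yields \eqref{KPTheoremReformulated}.

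The hard part is bookkeeping rather than ideas: keeping track of the numerous Gamma-quotients, of the signs produced by reversing products of consecutive integers in the residues, and of the shifted-parameter vectors through the reindexing, and verifying that the two term ratios coincide identically. A secondary point needing care is the legitimacy of the splitting and of the termwise summation — the polynomial part converges only by virtue of the hypothesis, while the simple-pole series converge only for $\Re(a)<1$ — which is why the identity is first proved on that strip and then continued in $a$; here one should note that the apparent poles of $\Gamma(1-a)$ at the positive integers are cancelled by Chu--Vandermonde zeros of the terminating hypergeometric factors, so that the right-hand side is in fact regular throughout the stated region.
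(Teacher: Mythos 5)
Your proposal is correct, but note that the paper does not actually prove this lemma: it is imported verbatim as a reformulation of Karp--Prilepkina \cite[Theorem 2.2]{KP} via \cite[Lemma 1]{HS:JP-L1}, so there is no in-paper argument to compare against. What you have written is a self-contained derivation along the standard lines of the Karp--Prilepkina proof, and the mechanism is sound: the Pochhammer rewritings \((f_j+m_j)_n/(f_j)_n=(f_j+n)_{m_j}/(f_j)_{m_j}\) and \((b_i)_n/(b_i+k_i)_n=(b_i)_{k_i}/(b_i+n)_{k_i}\) are correct; the polynomial part of the partial-fraction decomposition is killed exactly by the hypothesis \(\Re(k_1+\cdots+k_l-a-m_1-\cdots-m_r)>0\) via \({}_1F_0(a+j;-;1)=0\); the residue at \(n=-(b_q+t)\) is \((-1)^t\prod_j(f_j-b_q-t)_{m_j}\big/\bigl(t!\,(k_q-1-t)!\prod_{i\neq q}(b_i-b_q-t)_{k_i}\bigr)\); and after the reindexing \(t=k_q-1-u\) the \(u=0\) term reproduces the stated prefactor (via \(\tilde f_j-m_j=f_j-b_q-k_q+1\)) while the term ratio \(T_u/T_0\) assembles precisely the parameters of the \({}_{r+l+1}F_{r+l}\) in \eqref{KPTheoremReformulated} — in particular \(\Gamma(b_q+k_q-1-u)/\Gamma(b_q+k_q-1)=(-1)^u/(-b_q-k_q+2)_u\) and \(\Gamma(b_q+k_q-a)/\Gamma(b_q+k_q-u-a)=(-1)^u(-b_q-k_q+1+a)_u\) give the remaining lower and upper parameters with cancelling signs. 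Two small points of care: Gauss's summation for the pole pieces only requires \(\Re(1-a)>0\) (positivity of \(\Re(b_q+t)\) is needed only if you insist on the Euler-integral form), and the final continuation in \(a\) past \(\Re(a)=1\) is only relevant when \(\sum k_i-\sum m_j>1\); in the application in this paper \(a=-n_1+1\) is a nonpositive integer, so the series terminates and these issues do not arise. For the purposes of this paper you could equally well have stopped at the citation, but your sketch is a valid and complete route to the result.
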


\begin{rem}
 Notice that here it has been introduced the following notation. With the unit vector
represented~as
 \begin{align*}
 \vec{e}_p\coloneq (1,\ldots,1)\in\mathbb R^p ,
 \end{align*}
given a vector \(\vec{a}\in\mathbb R^p\), then
\(\vec{a}^{\ast q}\in\mathbb R^{p-1}\) means that the \(q\)-th component has been removed. This notation will often appear along the rest of the text.
\end{rem}

\subsection{Mellin Transform} 
As observed in \cite{VAWolfs,Wolfs}, the Mellin transform serves as a useful tool in the study of multiple orthogonal polynomials. The Mellin transform of a function \( f \in L^1(\mathbb{R}_{\geqslant 0}) \) is given by
\begin{align*}
(\mathcal{M} f)(s) = \int_0^\infty f(x) x^{s-1} \, \d x.
\end{align*}
Typically, it is analytic in a strip \(\{s\in\C \mid a<\operatorname{Re}(s)<b\}\). In that case, for any \(c\in (a,b) \), there exists an inverse transform, namely
\begin{align*}
	f(x) = \int_{c- \ii \infty}^{c+ \ii \infty} (\mathcal{M} f)(s) x^{-s} \frac{\operatorname d s}{2 \pi \ii },\quad x\in\R_{\geq 0}.
\end{align*}
Some standard examples are the Mellin transform of a gamma density on \((0,\infty)\),
\begin{align}\label{eq:Mellin_exp}
	(\mathcal {M}\operatorname{e}^{-x})(s) = \Gamma(s),
\end{align}
and the Mellin transform of a beta density on \((0,1)\),
\begin{equation}\label{eq:Mellin_beta}
	(\mathcal{M}(1-x)^\beta \chi_{(0,1)}(x))(s) = \frac{\Gamma(s)\Gamma(\beta+1)}{\Gamma(s+\beta+1)}.
\end{equation}

In the discrete setting, where the underlying measure \(\nu_N=\sum_{k=0}^N \delta_k\) has mass points on a lattice
\( \mathcal{L}_N = \{0,\ldots,N\}\), we may view the transform
\begin{align*}
 (\mathcal{M}_N f)(s) = \int_0^\infty f(x) \frac{\Gamma(x+s)}{\Gamma(x+1)} \operatorname d \nu_N(x) ,
\end{align*}
as the analogue of the Mellin transform. Its inverse is described in the result below. 

\begin{teo} \label{DMT_INV}
 Let \(\mathcal{C}\subset\C\) be an anticlockwise contour enclosing once \([-N,0]\). Then, 
\begin{align*}
f(x) = \int_{\mathcal{C}} (\mathcal{M}_N f)(s) \frac{\Gamma(x+1)}{\Gamma(x+s+1)} \frac{ \operatorname d s }{2 \pi \ii },\quad x\in\mathcal{L}_N.
\end{align*}
\end{teo}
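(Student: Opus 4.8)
The plan is to substitute the definition of $\mathcal M_N f$ into the right-hand side, exchange the (finite) sum with the contour integral, and reduce everything to one elementary kernel integral. Writing $x=m$ with $m\in\mathcal L_N$, and using that $\nu_N$ is supported on $\{0,\dots,N\}$, the right-hand side becomes
\begin{align*}
\sum_{k=0}^{N}\frac{f(k)}{k!}\,\Gamma(m+1)\,I_{k,m},\qquad I_{k,m}\coloneqq\int_{\mathcal C}\frac{\Gamma(k+s)}{\Gamma(m+s+1)}\,\frac{\d s}{2\pi\ii},
\end{align*}
the interchange being legitimate since the sum has finitely many terms. So it suffices to show $I_{k,m}=\delta_{k,m}$ for all $k,m\in\{0,\dots,N\}$: granting this, the right-hand side collapses to $f(m)\,\Gamma(m+1)/m!=f(m)$, which is the assertion.

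To evaluate $I_{k,m}$ I would split into two cases. If $k>m$, then $\Gamma(k+s)/\Gamma(m+s+1)=(s+m+1)(s+m+2)\cdots(s+k-1)$ is a polynomial in $s$, hence entire, and its integral around the closed contour $\mathcal C$ is $0$. If $k\le m$, then
\begin{align*}
\frac{\Gamma(k+s)}{\Gamma(m+s+1)}=\frac{1}{(s+k)(s+k+1)\cdots(s+m)}
\end{align*}
is rational with only simple poles, located at $s=-k,-k-1,\dots,-m$; because $0\le k\le m\le N$, all of them lie in $[-N,0]$ and hence inside $\mathcal C$, so by the residue theorem $I_{k,m}$ equals the sum of the corresponding residues. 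A short computation (splitting the product in the denominator into its factors with $l<j$ and $l>j$ when extracting the residue at the pole $s=-j$) shows that the residue at $s=-(k+i)$ equals $(-1)^{i}/\bigl(i!\,(m-k-i)!\bigr)$, whence
\begin{align*}
I_{k,m}=\frac{1}{(m-k)!}\sum_{i=0}^{m-k}(-1)^{i}\binom{m-k}{i}=\frac{(1-1)^{m-k}}{(m-k)!},
\end{align*}
which is $1$ if $m=k$ and $0$ if $m>k$. Hence $I_{k,m}=\delta_{k,m}$, as needed.

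There is essentially no analytic difficulty here — the discrete Mellin ``integral'' is a finite sum, so no convergence or interchange issue arises — and the only real step is the residue evaluation. The mild obstacle I expect there is sign bookkeeping: correctly pulling the factor $(-1)^{i}$ out of each residue and then recognising the alternating sum as the binomial expansion of $(1-1)^{m-k}$. I would also make a point of noting where the hypothesis is used: ``$\mathcal C$ encloses $[-N,0]$ once'' is exactly what guarantees that, for every admissible pair $(k,m)$, the entire pole set $\{-k,\dots,-m\}$ of the kernel lies inside $\mathcal C$ with winding number one; if only part of $[-N,0]$ were enclosed the cancellation above would break down. A more conceptual route would be to recognise $\Gamma(k+s)/\Gamma(m+s+1)$ as a Beta-type kernel and invoke~\eqref{eq:Mellin_beta} together with classical Mellin inversion, but the residue computation is shorter and fully self-contained.
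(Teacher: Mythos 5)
Your proposal is correct and follows essentially the same route as the paper: substitute the definition of $\mathcal{M}_N f$, apply the residue theorem to the kernel $\Gamma(k+s)/\Gamma(x+s+1)$ at its simple poles in $[-N,0]$, and collapse the alternating residue sum via the binomial expansion of $(1-1)^{x-k}$. The only difference is organizational — you isolate the kernel integral $I_{k,m}$ and prove $I_{k,m}=\delta_{k,m}$, whereas the paper writes out the double sum first — but the computation is identical.
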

\begin{proof}
 By definition, we have
\begin{align*}
 (\mathcal{M}_N f)(s) = \sum_{k=0}^N \frac{f(k)}{k!} \Gamma(s+k).
 \end{align*}
Observe that the ratio \[\frac{\Gamma(s+k) }{\Gamma(x+s+1)}=\frac{(s)_k }{(s)_{x+1}}\] has no poles if \(k\geq x+1\) and has simple poles at \(s\in\{k,\ldots,x\}\) if \(k\leq x\). The residue theorem therefore implies that
\begin{align*}
 \int_{\mathcal{C}} (\mathcal{M}_N f)(s) \frac{\Gamma(x+1)}{\Gamma(x+s+1)} \frac{ \operatorname d s }{2 \pi \ii } = \sum_{k=0}^x \sum_{l=k}^x \frac{f(k)}{k!} \frac{(-1)^{l-k}}{(l-k)!} \frac{x!}{(x-l)!}. 
 \end{align*}
It then remains to note that
\begin{align*}
 \sum_{l=k}^x \frac{(-1)^{l-k}}{(l-k)!} \frac{x!}{(x-l)!} = \sum_{l=0}^{x-k} \frac{(-1)^l}{l!} \frac{x!}{(x-k-l)!} = \frac{x!}{(x-k)!} (1-1)^{x-k} = k! \delta_{k,x},
 \end{align*}
in order to obtain the desired result.
\end{proof}

\subsection{Askey Scheme}

Here is a summary of the previously known results that we aim to expand upon in this paper.

\subsubsection{Laguerre of the First Kind Family} \label{S:Laguerre of the first kind}
The Laguerre weights are given by:
\begin{align*}
\begin{aligned}	w_{i}(x;\alpha_i) &= \operatorname{e}^{-x}x^{\alpha_i}, & 
		i &\in \{1, \ldots, p\}, & \Delta &= [0, \infty).
\end{aligned}
\end{align*}
Here, \(\vec{\alpha} = (\alpha_1, \ldots, \alpha_p)\) with \(\alpha_1, \ldots, \alpha_p > -1\). To establish an AT system, we require that \(\alpha_i - \alpha_j \notin \mathbb{Z}\) for \(i \neq j\).

The monic type II polynomials admit two equivalent expressions, as shown in \cite[\S 4]{AskeyII}:
\begin{align}
	\nonumber
	& \hspace{.4cm}
	L_{\vec{n}}(x) = (-1)^{|\vec{n}|}\prod_{q=1}^p (\alpha_q+1)_{n_q} \sum_{l_1=0}^{n_1} \cdots \sum_{l_p=0}^{n_p}
	\prod_{q=1}^p \dfrac{(-n_q)_{l_q}}{l_q!}
	\dfrac{(\alpha_1+n_1+1)_{l_2+\cdots+l_p} \cdots (\alpha_{p-1}+n_{p-1}+1)_{l_p}}{(\alpha_1+1)_{l_1+\cdots+l_p} \cdots (\alpha_p+1)_{l_p}} x^{l_1+\cdots+l_p},\\
	\label{LaguerreFKTypeIIWeighted}
	& \hspace{.55cm} L_{\vec{n}}(x) = (-1)^{|\vec{n}|}\prod_{q=1}^p (\alpha_q+1)_{n_q} \operatorname{e}^x \,\pFq{p}{p}{\vec{\alpha}+\vec{n}+\vec{e}_p}{\vec{\alpha}+\vec{e}_p}{-x}.
\end{align}
The type I polynomials were recently found in \cite{HS:JP-L1}:
\begin{align}
	\label{LaguerreITypeI}
	L^{(i)}_{\vec{n}}(x) &= (-1)^{|\vec{n}|-1}\dfrac{1}{(n_i-1)!\prod_{k=1, k \neq i}^{p} (\alpha_k-\alpha_i)_{n_k} \Gamma(\alpha_i+1)} \,\pFq{p}{p}{-n_i+1, (\alpha_i+1)\vec{e}_{p-1} - \vec{\alpha}^{\ast i} - \vec{n}^{\ast i}}{\alpha_i+1, (\alpha_i+1)\vec{e}_{p-1} - \vec{\alpha}^{\ast i}}{x} .
\end{align}

\subsubsection{Jacobi--Piñeiro Family}\label{S:Jacobi-Piñeiro}

Consider a system of Jacobi weights:
\begin{align*}
\begin{aligned}
 	w_{i}(x;\alpha_i,\beta)&=x^{\alpha_i}(1-x)^\beta, & i&\in\{1,\ldots,p\},& \Delta&=[0,1] .
 \end{aligned}
\end{align*}
Here \( \vec{\alpha}=(\alpha_1,\ldots,\alpha_p)\) with \( \alpha_1,\ldots,\alpha_p,\beta>-1\). To establish an AT system, we require that \( \alpha_i-\alpha_j\not\in\mathbb Z\) for \(i\neq j\).

The monic Jacobi--Piñeiro multiple orthogonal polynomials of type II admit both equivalent expressions, as shown in \cite[\S 3]{AskeyII}:
\begin{align}
\label{JPTypeII}
 P_{\vec{n}}(x)=&\begin{multlined}[t][.85\textwidth]
 	(-1)^{|\vec{n}|}\prod_{q=1}^p\dfrac{(\alpha_q+1)_{n_q}}{(\alpha_q+\beta+|\vec{n}|+1)_{n_q}}\sum_{l_1=0}^{n_1}\cdots\sum_{l_p=0}^{n_p}\,\prod_{q=1}^p
\dfrac{(-n_q)_{l_q}}{l_q!}\\\times 
 	\dfrac{(\alpha_1+n_1+1)_{l_2+\cdots+l_p}\cdots(\alpha_{p-1}+n_{p-1}+1)_{l_p}}{(\alpha_1+\beta+n_1+1)_{l_2+\cdots+l_p}\cdots(\alpha_{p-1}+\beta+n_1+\cdots+n_{p-1}+1)_{l_p}} \\\times \dfrac{(\alpha_1+\beta+n_1+1)_{l_1+\cdots+l_p}\cdots(\alpha_{p}+\beta+|\vec{n}|+1)_{l_p}}{(\alpha_1+1)_{l_1+\cdots+l_p}\cdots(\alpha_{p}+1)_{l_p}}\, x^{l_1+\cdots+l_p},
 \end{multlined}\\
\label{JPTypeIIWeighted}
 P_{\vec{n}}(x)=&(-1)^{|\vec{n}|}\prod_{q=1}^p\dfrac{(\alpha_q+1)_{n_q}}{(\alpha_q+\beta+|\vec{n}|+1)_{n_q}}(1-x)^{-\beta}\,\pFq{p+1}{p}{-|\vec{n}|-\beta,\vec{\alpha}+\vec{n}+\vec{e}_p}{\vec{\alpha}+\vec{e}_p}{x}.
\end{align}
In \cite{HS:JP-L1}, it was proved that the type I Jacobi--Piñeiro multiple orthogonal polynomials are, for \( i \in \{ 1, \ldots, p \} \) given by:
\begin{multline}
\label{JPTypeI}
 P^{(i)}_{\vec{n}} (x)
 =(-1)^{|\vec{n}|-1}\dfrac{\prod_{k=1}^{p}(\alpha_k+\beta+|\vec{n}|)_{n_k}}{(n_i-1)!\prod_{k=1,k\neq i}^{p}(\alpha_k-\alpha_i)_{n_k}}
\dfrac{\Gamma(\alpha_i+\beta+|\vec{n}|)}{\Gamma(\beta+|\vec{n}|)\Gamma(\alpha_i+1)}
 \\\times \pFq{p+1}{p}{-n_i+1,\alpha_i+\beta+|\vec{n}|, (\alpha_i+1)\vec{e}_{p-1}-\vec{\alpha}^{\ast i}-\vec{n}^{\ast i}}{\alpha_i+1,(\alpha_i+1)\vec{e}_{p-1}-\vec{\alpha}^{\ast i}}{x}.
\end{multline}

\subsubsection{Hahn Family}\label{S:Hahn_0}

This family arises from considering a system of discrete weights of the form:
\begin{align}
	\label{WeightsHahn}
\begin{aligned}
		w_{i}(x; \alpha_i, \beta, N) &= \frac{\Gamma(\alpha_i + x + 1)}{\Gamma(\alpha_i + 1) \Gamma(x + 1)} \frac{\Gamma(\beta + N - x + 1)}{\Gamma(\beta + 1) \Gamma(N - x + 1)}, & i & \in \{1, \ldots, p\}, & \Delta &= \{0, \ldots, N\}.
\end{aligned}
\end{align}
Here, \(\vec{\alpha} = (\alpha_1, \ldots, \alpha_p)\) with \(\alpha_1, \ldots, \alpha_p, \beta > -1\), \( |\vec{n}| \leq N \in \mathbb{N} \), and to establish an AT system, we require that \(\alpha_i - \alpha_j \notin \mathbb{Z}\) for \(i \neq j\).
The corresponding monic type II polynomials were first found for \(p = 2\) in \cite{Arvesu} and then generalized for \(p \geq 2\) in \cite{AskeyII}. 
These are given by:
\begin{multline}
	\label{HahnTypeII}
	Q_{\vec{n}}(x) = \prod_{q=1}^p \dfrac{(\alpha_q + 1)_{n_q}}{(\alpha_q + \beta + |\vec{n}| + 1)_{n_q}} 
	\\
\times\sum_{l_1 = 0}^{n_1} \cdots \sum_{l_p = 0}^{n_p} \prod_{q=1}^p \dfrac{(-n_q)_{l_q}}{l_q!} \frac{(-N)_{|\vec{n}|}}{(-N)_{l_1 + \cdots + l_p}} 
	\dfrac{(\alpha_1 + \beta + n_1 + 1)_{l_1 + \cdots + l_p} \cdots (\alpha_p + \beta + |\vec{n}| + 1)_{l_p}}{(\alpha_1 + 1)_{l_1 + \cdots + l_p} \cdots (\alpha_p + 1)_{l_p}} \\
	\times \dfrac{(\alpha_1 + n_1 + 1)_{l_2 + \cdots + l_p} \cdots (\alpha_{p-1} + n_{p-1} + 1)_{l_p}}{(\alpha_1 + \beta + n_1 + 1)_{l_2 + \cdots + l_p} \cdots (\alpha_{p-1} + \beta + n_1 + \cdots + n_{p-1} + 1)_{l_p}} (-x)_{l_1 + \cdots + l_p}.
\end{multline}
In \cite[\S 2]{HahnI}, it was proven that for \(p = 2\), \(i \in \{1, 2\}\), the type I polynomials can be expressed through a Kampé de Fériet function as:
\begin{multline}
	\label{HahnTypeI2measures}
	Q_{(n_1, n_2)}^{(i)}(x) 
	 = 
		\frac{(-1)^{n_i - 1} (N + 1 - n_1 - n_2)! (n_1 + n_2 - 2)!}{(n_1 - 1)! (n_2 - 1)! (\beta + 1)_{n_1 + n_2 - 1} (\alpha_i + \beta + n_1 + n_2 + n_i)_{N + 1 - n_1 - n_2}} \frac{(\hat{\alpha}_i + \beta + \hat{n}_i + 1)_{n_1 + n_2 - 1}}{(\alpha_i - \hat{\alpha}_i - \hat{n}_i + 1)_{n_1 + n_2 - 1}} \\
		\times \KF{2:1;3}{2:0;2}{(-n_i + 1, -N): (\hat{\alpha}_i - \alpha_i - n_i + 1); (\alpha_i + \beta + n_1 + n_2, \alpha_i - \hat{\alpha}_i - \hat{n}_i + 1, -x)}{(-n_1 - n_2 + 2, \hat{\alpha}_i + \beta + \hat{n}_i + 1): --; (\alpha_i + 1, -N)}{1, 1} ,
\end{multline}
where \(\hat{\alpha}_i \coloneq \delta_{i, 2} \alpha_1 + \delta_{i, 1} \alpha_2\) and \(\hat{n}_i \coloneq \delta_{i, 2} n_1 + \delta_{i, 1} n_2\).

\section{Hypergeometric Representations: Hahn Type I Polynomials}\label{S:Hahn}

The main aim of this section is to generalize the previous result of the Hahn type I polynomials for an arbitrary number of weight functions. To achieve this, we will reformulate the previous expression in a more convenient way. From the definition of the Kampé de Fériet series \eqref{KF}, expression \eqref{HahnTypeI2measures} can be written as
\begin{multline*}
	Q_{(n_1, n_2)}^{(i)}(x) = 
		\frac{(-1)^{n_i - 1} (N + 1 - n_1 - n_2)! (n_1 + n_2 - 2)!}{(n_1 - 1)! (n_2 - 1)! (\beta + 1)_{n_1 + n_2 - 1} (\alpha_i + \beta + n_1 + n_2 + n_i)_{N + 1 - n_1 - n_2}} \frac{(\hat{\alpha}_i + \beta + \hat{n}_i + 1)_{n_1 + n_2 - 1}}{(\alpha_i - \hat{\alpha}_i - \hat{n}_i + 1)_{n_1 + n_2 - 1}} \\
		\times \sum_{l = 0}^{n_i - 1} \frac{(-n_i + 1)_l (\alpha_i + \beta + n_1 + n_2)_l (\alpha_i - \hat{\alpha}_i - \hat{n}_i + 1)_l (-x)_l}{l! (-n_1 - n_2 + 2)_l (\hat{\alpha}_i + \beta + \hat{n}_i + 1)_l (\alpha_i + 1)_l} \, \pFq{3}{2}{-n_i + 1 + l, -N + l, \hat{\alpha}_i - \alpha_i - n_i + 1}{-n_1 - n_2 + 2 + l, \hat{\alpha}_i + \beta + \hat{n}_i + 1 + l}{1}.
\end{multline*}
Now, the previous \({}_3F_2\) function can be rewritten using Kummer's formula \eqref{Kummer} as follows:
\begin{multline*}
	\pFq{3}{2}{-n_i + 1 + l, -N + l, \hat{\alpha}_i - \alpha_i - n_i + 1}{-n_1 - n_2 + 2 + l, \hat{\alpha}_i + \beta + \hat{n}_i + 1 + l}{1} \\
	= \frac{(\alpha_i + \beta + N + 2 + l)_{n_i - 1 - l}}{(\hat{\alpha}_i + \beta + \hat{n}_i + 1 + l)_{n_i - 1 - l}} \, \pFq{3}{2}{-n_i + 1 + l, -n_1 - n_2 + 2 + N, \alpha_i - \hat{\alpha}_i - \hat{n}_i + 1 + l}{-n_1 - n_2 + 2 + l, \alpha_i + \beta + N + 2 + l}{1}.
\end{multline*}
By replacing this new \({}_3F_2\) function into the previous expression and simplifying, we find that the Hahn polynomials can be written as:
\begin{multline*}
	Q_{(n_1, n_2)}^{(i)}(x) = \dfrac{(-1)^{n_i - 1} (N + 1 - n_1 - n_2)! (n_1 + n_2 - 2)!
	(\alpha_1 + \beta + n_1 + n_2)_{n_1} (\alpha_2 + \beta + n_1 + n_2)_{n_2}
	}{
	(n_1 - 1)! (n_2 - 1)! (\beta + 1)_{n_1 + n_2 - 1} (\alpha_i + \beta + n_1 + n_2)_{N + 2 - n_1 - n_2}
	(\alpha_i - \hat{\alpha}_i - \hat{n}_i + 1)_{n_1 + n_2 - 1}} \\
	\times \underbrace{\KF{2:1;2}{2:0;1}{(-n_i + 1, \alpha_i - \hat{\alpha}_i - \hat{n}_i + 1) : (-n_1 - n_2 + 2 + N); (\alpha_i + \beta + n_1 + n_2, -x)}{(-n_1 - n_2 + 2, \alpha_i + \beta + N + 2): --; (\alpha_i + 1)}{1, 1}}_{\dfrac{(\alpha_i - \hat{\alpha}_i + 1)_{n_i - 1}}{(\hat{n}_i)_{n_i - 1}} \, \pFq{4}{3}{-n_i + 1, \alpha_i + \beta + n_1 + n_2, \alpha_i - \hat{\alpha}_i - \hat{n}_i + 1, x + \alpha_i + 1}{\alpha_i + 1, \alpha_i - \hat{\alpha}_i + 1, \alpha_i + \beta + N + 2}{1} \text{ by } \eqref{RR}}.
\end{multline*}
Simplifying, we arrive at the following formula:
\begin{multline*}
	Q_{(n_1, n_2)}^{(i)}(x) = \dfrac{(-1)^{n_1 + n_2 - 1} (N + 1 - n_1 - n_2)!}{(n_i - 1)! (\beta + 1)_{n_1 + n_2 - 1} (\alpha_i + \beta + n_1 + n_2)_{N + 2 - n_1 - n_2}} \dfrac{(\alpha_1 + \beta + n_1 + n_2)_{n_1} (\alpha_2 + \beta + n_1 + n_2)_{n_2}}{(\hat{\alpha}_i - \alpha_i)_{\hat{n}_i}} \\
	\times \pFq{4}{3}{-n_i + 1, \alpha_i + \beta + n_1 + n_2, \alpha_i - \hat{\alpha}_i - \hat{n}_i + 1, x + \alpha_i + 1}{\alpha_i + 1, \alpha_i - \hat{\alpha}_i + 1, \alpha_i + \beta + N + 2}{1}.
\end{multline*}
This simplified formula is easier to generalize than \eqref{HahnTypeI2measures}, leading to the desired generalization of the Hahn type I polynomials for an arbitrary number of weight functions.

\begin{teo}
 \label{HahnTypeITheorem}
The Hahn multiple orthogonal polynomials of type I for an arbitrary number 
\( p\) of weights are expressed~as:
 \begin{align}
 \label{HahnTypeI}
 Q_{\vec{n}}^{(i)}(x)&=\begin{multlined}[t][.8\textwidth]
 \dfrac{(-1)^{|\vec{n}|-1}(N+1-|\vec{n}|)!}{(n_i-1)!(\beta+1)_{|\vec{n}|-1}(\alpha_i+\beta+|\vec{n}|)_{N+2-|\vec{n}|}}\dfrac{\prod_{k=1}^p({\alpha}_k+\beta+|\vec{n}|)_{n_k}}{\prod_{k=1,k\neq i}^p({\alpha}_k-{\alpha}_i)_{{n}_k}}\\
 \times\pFq{p+2}{p+1}{-n_i+1,\alpha_i+\beta+|\vec{n}|,(\alpha_i+1)\vec{e}_{p-1}-\vec{\alpha}^{\ast i}-\vec{n}^{\ast i},x+\alpha_i+1}{\alpha_i+1,(\alpha_i+1)\vec{e}_{p-1}-\vec{\alpha}^{\ast i},\alpha_i+\beta+N+2}{1}
 \end{multlined}\\
 & =\sum_{l=0}^{n_i-1}C_{\vec{n}}^{(i),l}\,(x+\alpha_i+1)_l,
 \end{align}
 with
 \begin{multline}
 \label{coefHahnTypeI}
 C_{\vec{n}}^{(i),l}\coloneq\dfrac{(-1)^{|\vec{n}|-1}(N+1-|\vec{n}|)!}{(n_i-1)!(\beta+1)_{|\vec{n}|-1}(\alpha_i+\beta+|\vec{n}|)_{N+2-|\vec{n}|}}
 \\
 \times
 \dfrac{\prod_{k=1}^p({\alpha}_k+\beta+|\vec{n}|)_{n_k}}{\prod_{k=1,k\neq i}^p({\alpha}_k-{\alpha}_i)_{{n}_k}}
 \dfrac{(-n_i+1)_l(\alpha_i+\beta+|\vec{n}|)_l}{l!(\alpha_i+1)_l(\alpha_i+\beta+N+2)_l}
 \prod_{k=1,k\neq i}^p\dfrac{(\alpha_i-{\alpha}_k-{n}_k+1)_l}{(\alpha_i-{\alpha}_k+1)_l}.
 \end{multline}
\end{teo}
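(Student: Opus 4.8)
The plan is to verify directly that the polynomials $Q_{\vec n}^{(i)}$ defined by \eqref{HahnTypeI} satisfy the type I orthogonality relations \eqref{ortogonalidadTipoIDiscreta} for the Hahn weights \eqref{WeightsHahn}; by uniqueness of type I multiple orthogonal polynomials this pins them down. First I would fix the expansion \eqref{coefHahnTypeI} as the working definition, so that $Q_{\vec n}^{(i)}(x)=\sum_{l=0}^{n_i-1}C_{\vec n}^{(i),l}(x+\alpha_i+1)_l$; in particular $\deg Q_{\vec n}^{(i)}\le n_i-1$, which is the degree constraint. The equality of this expansion with the ${}_{p+2}F_{p+1}$ in \eqref{HahnTypeI} is just the definition of the hypergeometric series together with the identification of the coefficient of $(x+\alpha_i+1)_l$, since $(x+\alpha_i+1)$ enters the series only through the Pochhammer $(x+\alpha_i+1)_l$; this is a bookkeeping step.

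The core computation is to evaluate, for $j\in\{0,\dots,|\vec n|-1\}$,
\[
S_j\coloneq\sum_{i=1}^p\sum_{k=0}^N k^j\, Q_{\vec n}^{(i)}(k)\, w_i(k;\alpha_i,\beta,N)
\]
and show it equals $\delta_{j,|\vec n|-1}$. The natural move is to replace the monomial basis $k^j$ by the basis of falling factorials $(-k)_j$ (equivalently $\binom{k}{j}$), which is triangular, so it suffices to treat $j\in\{0,\dots,|\vec n|-1\}$ with $k^j$ replaced by $(-x)_j$. For each fixed $i$, interchange the sums over $k$ and over $l$ (the summation index in the $(x+\alpha_i+1)_l$ expansion); the inner sum over $k\in\{0,\dots,N\}$ of $(-k)_j\,(k+\alpha_i+1)_l\, w_i(k)$ is a terminating Chu–Vandermonde/Saalschütz-type sum (a ${}_3F_2$ or ${}_2F_1$ at $1$) that evaluates in closed form in terms of gamma functions — this is where \eqref{Chu-Van}, or a Saalschütz summation, does the work. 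After this, $S_j$ becomes a double sum over $i$ and $l$ of explicit ratios of Pochhammer symbols, which I would recognize as a Karp–Prilepkina-type evaluation: the sum over $l$ for fixed $i$ is a ${}_{p+1}F_p$ (or ${}_{p+2}F_{p+1}$) at $1$ to which Lemma \ref{lemma:KP} applies, collapsing it to a sum of lower-order hypergeometric terms, and then the outer sum over $i$ telescopes/cancels by a partial-fraction identity in the variables $\alpha_1,\dots,\alpha_p$ (the $\prod_{k\neq i}(\alpha_k-\alpha_i)$ denominators are exactly the residues of a rational function, so $\sum_i(\text{residue})$ gives $0$ for $j<|\vec n|-1$ and $1$ for $j=|\vec n|-1$).

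An alternative, and probably cleaner, route — which also explains where \eqref{HahnTypeI} comes from rather than merely checking it — is induction on $p$: assume the formula for $p-1$ weights (with the $p=2$ base case supplied by \eqref{HahnTypeI2measures}, whose reduction to the ${}_4F_3$ form is carried out in the displayed computation just before the theorem), and use a contiguous/limiting relation or a known reduction (letting one $n_k\to 0$, or peeling off one weight) to reduce the $p$-weight orthogonality conditions to the $(p-1)$-weight ones, with the new ${}_{p+2}F_{p+1}$ produced from the old ${}_{p+1}F_p$ by the same Kummer \eqref{Kummer} plus Rakha–Rathie \eqref{RR} manipulation shown for $p=2$. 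The main obstacle in either approach is the combinatorial bookkeeping: correctly tracking the many Pochhammer factors through the interchange of summations and through the application of Lemma \ref{lemma:KP}, and verifying that the residue/partial-fraction cancellation over $i$ produces exactly $\delta_{j,|\vec n|-1}$ and not merely a constant. I expect the Chu–Vandermonde evaluation of the $k$-sum and the final partial-fraction identity over $i$ to be the two places where sign errors and off-by-one shifts in the parameters are most likely, so those are the steps I would write out in full detail.
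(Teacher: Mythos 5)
Your first route is essentially the paper's proof: verify the type I orthogonality relations \eqref{ortogonalidadTipoIDiscreta} directly, after trading the power basis for a Pochhammer basis so that the inner sum over \(k\) collapses by Chu--Vandermonde \eqref{Chu-Van}, and then establish the resulting hypergeometric summation identity at argument \(1\) via Lemma \ref{lemma:KP}. The paper works with the basis \(\{(\beta+N-k+1)_j\}\) rather than your \(\{(-k)_j\}\); both are triangular and both make the \(k\)-sum a Chu--Vandermonde sum, after which each summand becomes an explicit \({}_{p+2}F_{p+1}\) evaluated at \(1\). So the strategy and the key lemma are correctly identified.

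The one genuine gap is in how you close the argument. The denominators attached to each summand are the Pochhammer products \(\prod_{k\neq i}(\alpha_k-\alpha_i)_{n_k}\), not the simple products \(\prod_{k\neq i}(\alpha_k-\alpha_i)\), so the sum over \(i\) is not literally a sum of residues of a rational function, and no elementary partial-fraction (Lagrange-interpolation) identity finishes the proof. What the paper actually does is apply Lemma \ref{lemma:KP} \emph{once}, to the \(i=1\) summand, with the identification \(a=-n_1+1\), \((f_1,f_2)=(\alpha_1+\beta+N+2,\alpha_1+\beta+2+j)\), \((m_1,m_2)=(j,|\vec{n}|-2-j)\), \(b_q=\alpha_1-\alpha_{q+1}-n_{q+1}+1\), \(k_q=n_{q+1}\); the right-hand side of \eqref{KPTheoremReformulated} is then recognized, term by term in \(q\), as exactly minus the \(i=q+1\) summand, which gives the vanishing for \(j\le|\vec{n}|-2\). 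The case \(j=|\vec{n}|-1\) requires a different parameter identification (the extra entry \(b=\alpha_1+\beta+|\vec{n}|\) with \(k=1\) is adjoined to the \(\vec{b}\)-list), and the corresponding extra term in \eqref{KPTheoremReformulated} is precisely what produces the value \((-1)^{|\vec{n}|-1}\), consistent with \((\beta+N-k+1)_{|\vec{n}|-1}=(-1)^{|\vec{n}|-1}k^{|\vec{n}|-1}+\cdots\). This identification of Karp--Prilepkina's output with the other \(p-1\) summands is the heart of the proof and is the step your outline leaves unproved; your alternative induction-on-\(p\) route is not what the paper does and would still need a mechanism of this kind.
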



\begin{proof}
We need to demonstrate that the aforementioned polynomials satisfy the orthogonality relations \eqref{ortogonalidadTipoIDiscreta} with respect to the weight functions \eqref{WeightsHahn}.
However, we will 
use the Pochhammer basis \(\{(\beta + N - k + 1)_j\}\) instead of the power basis \(\{k^j\}\), as it proves to be more convenient, as will be shown. First, let us find a more suitable expression for the discrete integral
 \begin{align*}
 \sum_{k=0}^{N} (\beta+N-k+1)_j\,Q_{\vec{n}}^{(i)}(k)\,w_i(k) 
 & =\sum_{l=0}^{n_i-1}C_{\vec{n}}^{(i),l}\sum_{k=0}^N \dfrac{(\alpha_i+k+1)_l\Gamma(\alpha_i+k+1)}{\Gamma(\alpha_i+1)\Gamma(k+1)}\dfrac{(\beta+N-k+1)_j\Gamma(\beta+N-k+1)}{\Gamma(\beta+1)\Gamma(N-k+1)}\\
 &=\dfrac{(\beta+1)_j}{N!}\sum_{l=0}^{n_i-1}C_{\vec{n}}^{(i),l}{(\alpha_i+1)_l}\underbrace{\sum_{k=0}^N \dbinom{N}{k}(\alpha_i+1+l)_{k}(\beta+1+j)_{N-k}}_{(\alpha_i+\beta+2+j+l)_N} .
 \end{align*}
The expression under the braces can be easily deduced using \eqref{Chu-Van}.
Replacing the coefficients from \eqref{coefHahnTypeI} and simplifying, we find that
 \begin{multline*}
 \sum_{k=0}^{N} (\beta+N-k+1)_j\,Q_{\vec{n}}^{(i)}(k)\,w_i(k)\\
 =\dfrac{(-1)^{|\vec{n}|-1}(N+1-|\vec{n}|)!\prod_{k=1}^p({\alpha}_k+\beta+|\vec{n}|)_{n_k}}{N!(\beta+1+j)_{|\vec{n}|-1-j}}\dfrac{\Gamma(\alpha_i+\beta+|\vec{n}|)(\alpha_i+\beta+N+2)_{j}}{(n_i-1)!\Gamma(\alpha_i+\beta+2+j)\prod_{k=1,k\neq i}^p({\alpha}_k-{\alpha}_i)_{{n}_k}}\\
 \times\pFq{p+2}{p+1}{-n_i+1,\alpha_i+\beta+N+2+j,\alpha_i+\beta+|\vec{n}|,(\alpha_i+1)\vec{e}_{p-1}-\vec{\alpha}^{\ast i}-\vec{n}^{\ast i}}{\alpha_i+\beta+N+2,\alpha_i+\beta+2+j,(\alpha_i+1)\vec{e}_{p-1}-\vec{\alpha}^{\ast i}}{1} .
 \end{multline*}
In this scenario, the orthogonality conditions \eqref{ortogonalidadTipoIDiscreta} that we need to establish are equivalent to the following hypergeometric summation formula:
 \begin{multline}
 \label{orthogonalityHahnTypeI}
 \dfrac{(-1)^{|\vec{n}|-1}(N+1-|\vec{n}|)!\prod_{k=1}^p({\alpha}_k+\beta+|\vec{n}|)_{n_k}}{N!(\beta+1+j)_{|\vec{n}|-1-j}}
 \sum_{i=1}^p\dfrac{\Gamma(\alpha_i+\beta+|\vec{n}|)(\alpha_i+\beta+N+2)_{j}}{(n_i-1)!\Gamma(\alpha_i+\beta+2+j)\prod_{k=1,k\neq i}^p({\alpha}_k-{\alpha}_i)_{{n}_k}}\\
 \times\pFq{p+2}{p+1}{-n_i+1,\alpha_i+\beta+N+2+j,\alpha_i+\beta+|\vec{n}|,(\alpha_i+1)\vec{e}_{p-1}-\vec{\alpha}^{\ast i}-\vec{n}^{\ast i}}{\alpha_i+\beta+N+2,\alpha_i+\beta+2+j,(\alpha_i+1)\vec{e}_{p-1}-\vec{\alpha}^{\ast i}}{1}
 \\
 =\begin{cases}
 0,\;\text{if}\; j\in\{0,\ldots,|\vec{n}|-2\},\\
 (-1)^{|n|-1},\;\text{if}\; j=|\vec{n}|-1.
 \end{cases}
 \end{multline}
The value of the inner product with respect to the highest Pochhammer symbol must be \( (-1)^{|\vec{n}|-1}\) because \((\beta+N-k+1)_{|\vec{n}|-1} = (-1)^{|\vec{n}|-1} k^{|\vec{n}|-1} + \cdots\).
 
Without loss of generality, we will now consider the summand \(i=1\) from the previous sum and apply equation \eqref{KPTheoremReformulated} to it. We need to distinguish between two cases depending on \(j\).
If \(j\in\{0,\ldots,|\vec{n}|-2\}\), in accordance with the notation introduced in Lemma \ref{lemma:KP}, we can identify the following parameters:
\begin{align*}
 &a = -n_1 + 1, 
 \quad (f_1, f_2) = (\alpha_1 + \beta + N + 2, \alpha_1 + \beta + 2 + j), 
 \quad
 (m_1, m_2) = (j, |\vec{n}| - 2 - j), \\
 &(b_1, \ldots, b_{p-1}) = (\alpha_1 - \alpha_2 - n_2 + 1, \ldots, \alpha_1 - \alpha_p - n_p + 1), \quad
 (k_1, \ldots, k_{p-1}) = (n_2, \ldots, n_p).
\end{align*}
The condition \(\Re(k_1 + \cdots + k_{p-1} - a - m_1 - m_2) = 1 > 0\) is met, allowing us to apply equation \eqref{KPTheoremReformulated}. 
After some calculations, we can express the Hahn multiple orthogonal polynomials of type I for \(i=1\) and \(j \in \{0, \ldots, |\vec{n}|-2\}\) as follows:
 \begin{multline*}
 \dfrac{\Gamma(\alpha_1+\beta+|\vec{n}|)(\alpha_1+\beta+N+2)_{j}}{(n_1-1)!\Gamma(\alpha_1+\beta+2+j)\prod_{k=2}^p({\alpha}_k-{\alpha}_1)_{{n}_k}} 
  \\
\times \pFq{p+2}{p+1}{-n_1+1,\alpha_1+\beta+N+2+j,\alpha_1+\beta+|\vec{n}|,(\alpha_1+1)\vec{e}_{p-1}-\vec{\alpha}^{\ast 1}-\vec{n}^{\ast 1}}{\alpha_1+\beta+N+2,\alpha_1+\beta+2+j,(\alpha_1+1)\vec{e}_{p-1}-\vec{\alpha}^{\ast 1}}{1}\\
 =-\sum_{i=2}^p\dfrac{\Gamma(\alpha_i+\beta+|\vec{n}|)(\alpha_i+\beta+N+2)_{j}}{(n_i-1)!\Gamma(\alpha_i+\beta+2+j)\prod_{k=1,k\neq i}^p({\alpha}_k-{\alpha}_i)_{{n}_k}}
 \\ \times 
 \pFq{p+2}{p+1}{-n_i+1,\alpha_i+\beta+N+2+j,\alpha_i+\beta+|\vec{n}|,(\alpha_i+1)\vec{e}_{p-1}-\vec{\alpha}^{\ast i}-\vec{n}^{\ast i}}{\alpha_i+\beta+N+2,\alpha_i+\beta+2+j,(\alpha_i+1)\vec{e}_{p-1}-\vec{\alpha}^{\ast i}}{1} ,
 \end{multline*}
 which is equivalent to say that the left-hand side of equation \eqref{orthogonalityHahnTypeI} equals 0 for \( j=0,\ldots,|\vec{n}|-2\). 
 
 Now, when \( j=|\vec{n}|-1\), referring to the notation introduced in Lemma \ref{lemma:KP}, we can identify:
 \begin{align*}
 &a=-n_1+1; \quad f=\alpha_1+\beta+N+2, \quad m=|\vec{n}|-1;\\
 &(b_1,\ldots,b_{p})=(\alpha_1+\beta+|\vec{n}|,\alpha_1-\alpha_2-n_2+1,\ldots,\alpha_1-\alpha_p-n_p+1),\quad (k_1,\ldots,k_{p})=(1,n_2,\ldots,n_p) .
 \end{align*}
 The condition \( \Re(k_1+\cdots+k_{p}-a-m)=1>0\) is met, allowing us to apply equation \eqref{KPTheoremReformulated}. This time we introduce an additional parameter \( b\), which results in an extra term in the decomposition. After some calculations, we can express
 \begin{multline*}
 \dfrac{(-1)^{|\vec{n}|-1}(N+1-|\vec{n}|)!\prod_{k=1}^p({\alpha}_k+\beta+|\vec{n}|)_{n_k}}{N!}\dfrac{\Gamma(\alpha_1+\beta+|\vec{n}|)(\alpha_1+\beta+N+2)_{|\vec{n}|-1}}{(n_1-1)!\Gamma(\alpha_1+\beta+|\vec{n}|+1)\prod_{k=2}^p({\alpha}_k-{\alpha}_1)_{{n}_k}}\\
 \times\pFq{p+2}{p+1}{-n_1+1,\alpha_1+\beta+N+|\vec{n}|+1,\alpha_1+\beta+|\vec{n}|,(\alpha_1+1)\vec{e}_{p-1}-\vec{\alpha}^{\ast 1}-\vec{n}^{\ast 1}}{\alpha_1+\beta+N+2,\alpha_1+\beta+|\vec{n}|+1,(\alpha_1+1)\vec{e}_{p-1}-\vec{\alpha}^{\ast 1}}{1}\\
 =(-1)^{|\vec{n}|-1}-\dfrac{(-1)^{|\vec{n}|-1}(N+1-|\vec{n}|)!\prod_{k=1}^p({\alpha}_k+\beta+|\vec{n}|)_{n_k}}{N!}\sum_{i=2}^p\dfrac{\Gamma(\alpha_i+\beta+|\vec{n}|)(\alpha_i+\beta+N+2)_{|\vec{n}|-1}}{(n_i-1)!\Gamma(\alpha_i+\beta+|\vec{n}|+1)\prod_{k=1,k\neq i}^p({\alpha}_k-{\alpha}_i)_{{n}_k}}\\
 \times\pFq{p+2}{p+1}{-n_i+1,\alpha_i+\beta+N+|\vec{n}|+1,\alpha_i+\beta+|\vec{n}|,(\alpha_i+1)\vec{e}_{p-1}-\vec{\alpha}^{\ast i}-\vec{n}^{\ast i}}{\alpha_i+\beta+N+2,\alpha_i+\beta+|\vec{n}|+1,(\alpha_i+1)\vec{e}_{p-1}-\vec{\alpha}^{\ast i}}{1} ,
 \end{multline*}
which is equivalent to saying that the left-hand side of equation \eqref{orthogonalityHahnTypeI} equals \( (-1)^{|\vec{n}|-1}\) when \( j=|\vec{n}|-1\). 
\end{proof}

\section{Integral Representations}\label{S: Integral}

In this section, we will derive some contour integral representations for the Hahn multiple orthogonal polynomials of types I and II. Rather than starting from the hypergeometric representations in Theorem~\ref{HahnTypeITheorem} and in equation~\eqref{HahnTypeII}, we aim to provide a constructive proof for these integral representations based on the residue theorem and the Mellin transform. This approach will also lead to a constructive proof for the associated hypergeometric representations. In order to get a better understanding of the ideas behind the techniques presented here, we will first investigate the settings of the multiple Laguerre polynomials of the first kind and the Jacobi--Piñeiro polynomials.

\subsection{Laguerre of the First Kind Multiple Orthogonal Polynomials}

Here, we rely on \S \ref{S:Laguerre of the first kind}.

\subsubsection{Multiple Orthogonal Polynomials of Type I}

Our approach relies on encoding the type I linear forms \( L_{\vec{n}}^{(I)}(x) = \sum_{i=1}^p L_{\vec{n}}^{(i)}(x) x^{\alpha_i} \) as a contour integral in the following manner.

\begin{pro} \label{LI_PC}
 Let us consider:
\begin{enumerate}[\rm i)]
	\item a 
linear form
\(
F_{\vec{n}}(x) = \sum_{i=1}^p A_{\vec{n},i}(x) x^{\alpha_i} \) with \( \deg A_{\vec{n},i} \leqslant n_i-1\),
\item 
an anticlockwise contour \( \Sigma\subset\C\) enclosing once \(\mathcal I=\cup_{i=1}^p[\alpha_i,n_i+\alpha_i-1]\),
\item and a function \( \varphi \) analytic inside and on \( \Sigma\) that doesn't vanish on \( \mathcal I\). 
\end{enumerate}
Then there exists a unique polynomial \( p_{\sz{n}-1}(t)\) of degree at most \( \sz{n}-1\) such that
\begin{align}\label{eq:integral_representation}
 F_{\vec{n}}(x) = \int_{\Sigma}\frac{p_{\sz{n}-1}(t)}{\prod_{i=1}^p (\alpha_i-t)_{n_i}} \varphi (t) x^t \frac{ \operatorname d t }{2 \pi \ii }.
 \end{align}
\end{pro}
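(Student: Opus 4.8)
The plan is to evaluate the contour integral on the right of \eqref{eq:integral_representation} by the residue theorem, observe that the result is automatically a linear form of the prescribed shape, and then match it to $F_{\vec{n}}$ via a dimension count. Write $D(t)\coloneqq\prod_{i=1}^p(\alpha_i-t)_{n_i}$, a polynomial in $t$ of degree $\sz{n}$ whose zeros are exactly the $\sz{n}$ points $\alpha_i+k$ with $1\le i\le p$ and $0\le k\le n_i-1$; under the standing AT assumption $\alpha_i-\alpha_j\notin\mathbb{Z}$ for $i\neq j$ these points are pairwise distinct, so every zero of $D$ is simple, and they all lie on $\mathcal I$. Since $\varphi$ is analytic inside and on $\Sigma$ while $p_{\sz{n}-1}(t)\,x^{t}$ is entire in $t$ (for $x>0$), the only singularities of the integrand inside $\Sigma$ are these simple poles, and as $\Sigma$ encircles $\mathcal I$ once the residue theorem yields
\[
\int_{\Sigma}\frac{p_{\sz{n}-1}(t)}{D(t)}\,\varphi(t)\,x^{t}\,\frac{\d t}{2\pi\ii}=\sum_{i=1}^{p}\sum_{k=0}^{n_i-1}\frac{p_{\sz{n}-1}(\alpha_i+k)\,\varphi(\alpha_i+k)}{D'(\alpha_i+k)}\,x^{\alpha_i+k}.
\]
Collecting the terms sharing the factor $x^{\alpha_i}$ exhibits the right-hand side as $\sum_{i=1}^p A_{\vec{n},i}(x)\,x^{\alpha_i}$ with $A_{\vec{n},i}(x)\coloneqq\sum_{k=0}^{n_i-1}\tfrac{p_{\sz{n}-1}(\alpha_i+k)\,\varphi(\alpha_i+k)}{D'(\alpha_i+k)}\,x^{k}$, a polynomial of degree at most $n_i-1$. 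Thus \emph{any} $p_{\sz{n}-1}$ with $\deg p_{\sz{n}-1}\le\sz{n}-1$ produces through \eqref{eq:integral_representation} a form of the required type, and the identity $F_{\vec{n}}(x)=\int_\Sigma\cdots$ holds precisely when these $A_{\vec{n},i}$ agree with the components of $F_{\vec{n}}$ — which are themselves uniquely determined by $F_{\vec{n}}$, since the distinct exponents $\alpha_i+k$ make the monomials $x^{\alpha_i+k}$ linearly independent on $(0,\infty)$.

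It remains to prove that the linear map
\[
L:\{q\in\C[t]:\deg q\le\sz{n}-1\}\longrightarrow\bigoplus_{i=1}^{p}\{A\in\C[x]:\deg A\le n_i-1\},\qquad L(q)=\bigl(A_{\vec{n},i}\bigr)_{i=1}^{p},
\]
is a bijection, for then the sought polynomial exists and is unique, namely $p_{\sz{n}-1}=L^{-1}\bigl((A_{\vec{n},i})_{i}\bigr)$. Both the source and the target of $L$ are $\sz{n}$-dimensional, so it is enough to check injectivity. Now $L$ factors as $q\mapsto\bigl(q(\alpha_i+k)\bigr)_{i,k}$, evaluation at the $\sz{n}$ pairwise distinct nodes $\alpha_i+k$ — a bijection on polynomials of degree $\le\sz{n}-1$ by Lagrange interpolation — followed by the diagonal rescaling of the $(i,k)$-entry by the constant $\varphi(\alpha_i+k)/D'(\alpha_i+k)$, which is nonzero because the zeros of $D$ are simple and $\varphi$ does not vanish on $\mathcal I$. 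Hence $L$ is injective, therefore bijective, and the proposition follows.

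There is no real obstacle in this argument; the single point requiring attention is that the interpolation nodes $\alpha_i+k$ be distinct and that neither $D'(\alpha_i+k)$ nor $\varphi(\alpha_i+k)$ vanish, which is exactly where the AT hypothesis $\alpha_i-\alpha_j\notin\mathbb{Z}$ and the non-vanishing of $\varphi$ on $\mathcal I$ are used. The remaining work is the residue computation together with the dimension count. The same scheme — residue theorem to unfold the integral, Lagrange interpolation to invert — will reappear for the Jacobi--Piñeiro and Hahn families, with $\varphi$ chosen to generate the appropriate extra weight factor.
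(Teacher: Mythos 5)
Your proof is correct and follows essentially the same route as the paper's: apply the residue theorem to unfold the contour integral into the sum $\sum_{i,k} p_{\sz{n}-1}(\alpha_i+k)\varphi(\alpha_i+k)\,x^{\alpha_i+k}/D'(\alpha_i+k)$ and then recover $p_{\sz{n}-1}$ uniquely from the $\sz{n}$ resulting interpolation conditions at the distinct nodes $\alpha_i+k$. The only difference is cosmetic — the paper writes $D'(\alpha_i+k)$ out explicitly in terms of factorials and Pochhammer symbols, and you make explicit the non-vanishing of $\varphi(\alpha_i+k)/D'(\alpha_i+k)$ and the Lagrange-interpolation bijection that the paper leaves implicit.
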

\begin{proof}
Suppose that \( p_{|\vec{n}|-1}(t) \) is a polynomial of degree at most \( |\vec{n}|-1 \). Then, it follows from the residue theorem that
\begin{align*}
	\int_{\Sigma} \frac{p_{|\vec{n}|-1}(t)}{\prod_{i=1}^p (\alpha_i - t)_{n_i}} \varphi(t) x^t \frac{\operatorname dt}{2 \pi \operatorname i} = \sum_{i=1}^p \sum_{k=0}^{n_i-1} \frac{(-1)^kp_{|\vec{n}|-1}(k + \alpha_i) \varphi(k + \alpha_i) }{ (n_i-1-k)! k! \prod_{j=1,j\neq i}^p (\alpha_j-\alpha_i-k)_{n_j} i}x^{k + \alpha_i}.
\end{align*}
If we require the Equation
\eqref{eq:integral_representation}
to be satisfied, and we write \( A_{\vec{n},i}(x) = \sum_{k=0}^{n_i-1} A_{\vec{n},i}[k] x^k \), we conclude that
\begin{align*}
 A_{\vec{n},i}[k] = \frac{(-1)^kp_{|\vec{n}|-1}(k + \alpha_i) \varphi(k + \alpha_i) }{ (n_i-1-k)! k! \prod_{j=1,j\neq i}^p (\alpha_j-\alpha_i-k)_{n_j}}.
\end{align*}
These \( |\vec{n}| \) interpolation conditions then uniquely determine the polynomial \( p_{|\vec{n}|-1}(t) \) of degree \( |\vec{n}|-1 \).
\end{proof}

The orthogonality conditions, along with the normalization condition, will enable us to explicitly determine the underlying polynomials for the type I linear forms \( L_{\vec{n}}^{(I)}(x) = \sum_{i=1}^p L_{\vec{n}}^{(i)}(x) x^{\alpha_i} \). A crucial element in the proof will be the following lemma.

\begin{lemma} \label{IR_lem}
Let \( \Sigma\subset\C\) be an anticlockwise contour enclosing once \(\cup_{i=1}^p[\alpha_i,n_i+\alpha_i-1]\). Suppose that \(p_{\sz{n}-1}(t)\) is polynomial of degree at most \(\sz{n}-1\) for which
\begin{align}
	\int_\Sigma \frac{p_{\sz{n}-1}(t)}{\prod_{i=1}^p(\alpha_i-t)_{n_i}} q_{\sz{n}-2}(t) \frac{\operatorname{d} t}{2 \pi \ii} = 0,
\end{align}
for every polynomial \(q_{\sz{n}-2}(t)\) of degree at most \(\sz{n}-2\). Then \(p_{\sz{n}-1}(t)\) is a constant.
\end{lemma}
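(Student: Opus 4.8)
The plan is to exploit the explicit residue evaluation already established in the proof of Proposition \ref{LI_PC}. Given a polynomial $p_{|\vec n|-1}$ of degree at most $|\vec n|-1$, the residue theorem expresses
\[
\int_\Sigma \frac{p_{|\vec n|-1}(t)}{\prod_{i=1}^p(\alpha_i-t)_{n_i}} q_{|\vec n|-2}(t)\,\frac{\operatorname d t}{2\pi\ii}
= \sum_{i=1}^p\sum_{k=0}^{n_i-1} \frac{(-1)^k p_{|\vec n|-1}(k+\alpha_i)\, q_{|\vec n|-2}(k+\alpha_i)}{(n_i-1-k)!\,k!\prod_{j=1,j\neq i}^p(\alpha_j-\alpha_i-k)_{n_j}},
\]
for any polynomial $q_{|\vec n|-2}$ of degree at most $|\vec n|-2$. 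Introduce the coefficients
\[
c_{i,k}\coloneq \frac{(-1)^k p_{|\vec n|-1}(k+\alpha_i)}{(n_i-1-k)!\,k!\prod_{j=1,j\neq i}^p(\alpha_j-\alpha_i-k)_{n_j}},
\]
so the hypothesis says that $\sum_{i,k} c_{i,k}\, q(k+\alpha_i)=0$ for every polynomial $q$ of degree at most $|\vec n|-2$.

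Next I would observe that the $|\vec n|$ nodes $k+\alpha_i$, $i\in\{1,\dots,p\}$, $k\in\{0,\dots,n_i-1\}$, are pairwise distinct: the AT-system condition $\alpha_i-\alpha_j\notin\mathbb Z$ for $i\neq j$ guarantees nodes from different families never coincide, and within a single family the values $\alpha_i,\alpha_i+1,\dots,\alpha_i+n_i-1$ are obviously distinct. Testing the relation $\sum c_{i,k}\,q(k+\alpha_i)=0$ against the $|\vec n|-1$ monomials $q(t)=1,t,\dots,t^{|\vec n|-2}$ gives $|\vec n|-1$ linear equations in the $|\vec n|$ unknowns $c_{i,k}$, with coefficient matrix a (rectangular) Vandermonde matrix in the distinct nodes; hence the solution space is one-dimensional. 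I would then exhibit one explicit nonzero solution. The natural candidate is $c_{i,k}=\binom{n_i-1}{k}(-1)^k\,r_i$ for suitable constants $r_i$, or more cleanly: take $p_{|\vec n|-1}$ to be the constant polynomial $1$ and check directly (via partial fractions of $1/\prod_i(\alpha_i-t)_{n_i}$, or via the identity $\sum_{l=k}^{x}\frac{(-1)^{l-k}}{(l-k)!}\frac{x!}{(x-l)!}=k!\,\delta_{k,x}$ already used in the proof of Theorem \ref{DMT_INV}) that the corresponding $c_{i,k}$ annihilate all polynomials of degree $\leq|\vec n|-2$. Equivalently, $\int_\Sigma \frac{q_{|\vec n|-2}(t)}{\prod_i(\alpha_i-t)_{n_i}}\frac{\operatorname d t}{2\pi\ii}=0$ because the integrand is $\operatorname O(t^{-2})$ at infinity, so deforming $\Sigma$ to a large circle kills the integral; this shows $p_{|\vec n|-1}\equiv 1$ lies in the kernel.

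Finally, since the kernel of the map $p_{|\vec n|-1}\mapsto\big(\text{the functional }q\mapsto\int_\Sigma\cdots\big)$ is at most one-dimensional and already contains the constants, any $p_{|\vec n|-1}$ satisfying the hypothesis must be a scalar multiple of $1$, i.e.\ a constant. The one genuine point requiring care — and the main obstacle — is the dimension count: one must argue that the linear map sending $p_{|\vec n|-1}$ (an $|\vec n|$-dimensional space) to the tuple $(c_{i,k})$ is a bijection onto $\mathbb C^{|\vec n|}$, which follows because the $|\vec n|$ interpolation conditions at the distinct nodes $k+\alpha_i$ determine $p_{|\vec n|-1}$ uniquely (this is exactly the uniqueness already invoked at the end of the proof of Proposition \ref{LI_PC}), and then that the space of $q$'s has dimension exactly $|\vec n|-1$, forcing the annihilator to be a line. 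Once these dimensions are pinned down, the conclusion is immediate.
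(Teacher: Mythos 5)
Your proposal is correct, but it follows a genuinely different route from the paper's proof. The paper argues directly: it feeds the hypothesis the specific test polynomials \(q_{\sz{n}-2}(t)=\prod_{i=1}^p(\alpha_i-t)_{n_i}/\bigl((\alpha_1-t)(\alpha_j+k-t)\bigr)\), so that the integrand collapses to \(p_{\sz{n}-1}(t)/\bigl((\alpha_1-t)(\alpha_j+k-t)\bigr)\); a two\nobreakdash-pole residue computation then gives \(p_{\sz{n}-1}(\alpha_j+k)=p_{\sz{n}-1}(\alpha_1)\) for every node, and a degree count finishes the argument. You instead run a duality/dimension argument: the residue expansion turns the hypothesis into \(\sum_{i,k}c_{i,k}\,q(k+\alpha_i)=0\), the \(\sz{n}-1\) monomial tests give a full-rank rectangular Vandermonde system at the \(\sz{n}\) distinct nodes (so the solution space of \((c_{i,k})\) is exactly one-dimensional), the map \(p_{\sz{n}-1}\mapsto(c_{i,k})\) is a bijection by Lagrange interpolation, and the constant \(p_{\sz{n}-1}\equiv 1\) is exhibited as a nonzero kernel element via the \(\operatorname{O}(t^{-2})\) decay at infinity. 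Both arguments are valid and both rest on the same two facts (the nodes \(\alpha_i+k\) are pairwise distinct under the standing AT assumption, and a degree-\((\sz{n}-1)\) polynomial is determined by its values at \(\sz{n}\) points). The paper's version is shorter and avoids any rank considerations; yours costs the extra Vandermonde rank step and the verification that constants lie in the kernel, but in exchange it shows that the kernel is \emph{exactly} the constants (i.e.\ that constants genuinely satisfy the orthogonality conditions), a fact the paper only obtains implicitly later when normalizing \(p_{\sz{n}-1}[0]\).
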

\begin{proof}
    Note that the assumption holds in particular for every polynomial of the form 
    \begin{align}
        q_{\sz{n}-2}(t) = \frac{\prod_{i=1}^p(\alpha_i-t)_{n_i}}{(\alpha_1-t)(\alpha_j+k-t)},    \end{align}
     where \(k\in\{0,\ldots,n_j-1\}\) and \(j\in\{1,\ldots,p\}\) with \(\alpha_j+k\neq \alpha_1\). After an application of the residue theorem, the identity
    \begin{align}
	\int_\Sigma \frac{p_{\sz{n}-1}(t)}{(\alpha_1-t)(\alpha_j+k-t)} \frac{\operatorname{d} t}{2 \pi \ii} = 0,
    \end{align}
    gives
    \begin{align}
 p_{\sz{n}-1}(\alpha_j + k) = p_{\sz{n}-1}(\alpha_1),
    \end{align}
    indicating that \( p_{\sz{n}-1}(t) \) remains constant across the \( \sz{n} \) zeros of \( \prod_{i=1}^p (\alpha_i - t)_{n_i} \). Since \( p_{\sz{n}-1}(t) \) has degree at most \( \sz{n} - 1 \), it follows that \( p_{\sz{n}-1}(t) \) is a constant.
\end{proof}

\begin{teo} \label{LI_IR}
Let \( \Sigma \subset \mathbb{C} \) be as in Proposition \ref{LI_PC} and take \( \operatorname{Re}(t) > -1 \). Then the linear form
\begin{align*}
L_{\vec{n}}^{(I)}(x) \coloneq \sum_{i=1}^p L_{\vec{n}}^{(i)}(x) x^{\alpha_i} ,
\end{align*}
is given by
\begin{align}\label{eq:LI}
	L_{\vec{n}}^{(I)}(x) = (-1)^{\sz{n}-1} \int_\Sigma \frac{1}{\Gamma(t+1)} \frac{x^t}{\prod_{i=1}^p(\alpha_i-t)_{n_i}} \frac{\operatorname{d} t}{2 \pi \ii}.
\end{align}
\end{teo}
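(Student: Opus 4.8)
The plan is to use the integral representation from Proposition~\ref{LI_PC} together with the characterization lemma, Lemma~\ref{IR_lem}, to pin down the underlying polynomial. Applying Proposition~\ref{LI_PC} with the choice $\varphi(t) = 1/\Gamma(t+1)$ (which is analytic everywhere and nonvanishing on $\mathcal{I}$ since $\operatorname{Re}(t)>-1$ there), there exists a unique polynomial $p_{|\vec{n}|-1}(t)$ of degree at most $|\vec{n}|-1$ such that
\begin{align*}
L_{\vec{n}}^{(I)}(x) = \int_\Sigma \frac{p_{|\vec{n}|-1}(t)}{\prod_{i=1}^p(\alpha_i-t)_{n_i}} \frac{1}{\Gamma(t+1)} x^t \frac{\operatorname{d} t}{2\pi\ii}.
\end{align*}
It then remains to show that $p_{|\vec{n}|-1}(t) \equiv (-1)^{|\vec{n}|-1}$.

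First I would translate the orthogonality relations \eqref{ortogonalidadTipoIContinua} into conditions on the contour integral. For $j\in\{0,\ldots,|\vec{n}|-2\}$ we have $\int_0^\infty x^j L_{\vec{n}}^{(I)}(x)\,\operatorname{d}x = 0$; substituting the integral representation and exchanging the order of integration, the inner $x$-integral produces $\int_0^\infty x^{t+j}\,\operatorname{d}x$, which does not converge in the classical sense, so the cleaner route is to first recognize the type I linear form through the Laguerre weights $w_i(x;\alpha_i)=\operatorname{e}^{-x}x^{\alpha_i}$: namely $L_{\vec{n}}^{(I)}(x) = \operatorname{e}^{-x}\sum_{i=1}^p L_{\vec{n}}^{(i)}(x)x^{\alpha_i}$ if we absorb the exponential, or equivalently compute the Mellin-type moments directly. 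The more robust approach is to test against the weighted monomials: for a polynomial $q_{|\vec{n}|-2}(t)$ of degree at most $|\vec{n}|-2$, one shows using \eqref{eq:Mellin_exp} that
\begin{align*}
\int_0^\infty \Bigl(\text{suitable combination}\Bigr) = \int_\Sigma \frac{p_{|\vec{n}|-1}(t)}{\prod_{i=1}^p(\alpha_i-t)_{n_i}} q_{|\vec{n}|-2}(t)\frac{\operatorname{d}t}{2\pi\ii},
\end{align*}
with the left side vanishing by orthogonality; Lemma~\ref{IR_lem} then forces $p_{|\vec{n}|-1}$ to be constant. Concretely, multiplying $L_{\vec{n}}^{(I)}(x)$ by $x^j$, integrating $x$ from $0$ to $\infty$ against $\operatorname{e}^{-x}$ (the weight already carries $\operatorname{e}^{-x}$), and using $\int_0^\infty \operatorname{e}^{-x}x^{t+j}\operatorname{d}x = \Gamma(t+j+1)$ together with $\Gamma(t+j+1)/\Gamma(t+1) = (t+1)_j$, one gets that the $x$-integration converts the integrand's $x^t/\Gamma(t+1)$ factor into the polynomial $(t+1)_j$ of degree $j\le|\vec{n}|-2$ in $t$. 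Hence the orthogonality conditions give exactly the hypotheses of Lemma~\ref{IR_lem}, so $p_{|\vec{n}|-1}(t)$ is a constant $c$.

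To evaluate the constant $c$, I would impose the normalization $\int_0^\infty x^{|\vec{n}|-1} L_{\vec{n}}^{(I)}(x)\operatorname{e}^{0}\,\cdots = 1$; repeating the computation above with $j=|\vec{n}|-1$ yields $c\int_\Sigma \frac{(t+1)_{|\vec{n}|-1}}{\prod_{i=1}^p(\alpha_i-t)_{n_i}}\frac{\operatorname{d}t}{2\pi\ii} = 1$. The polynomial $(t+1)_{|\vec{n}|-1}$ has degree exactly $|\vec{n}|-1$ with leading coefficient $1$, while $\prod_{i=1}^p(\alpha_i-t)_{n_i}$ has degree $|\vec{n}|$ with leading coefficient $(-1)^{|\vec{n}|}$, so by the residue theorem (the residue at infinity, or summing the finitely many residues inside $\Sigma$) the integral equals $(-1)^{|\vec{n}|-1}\cdot(\text{leading coeff ratio}) = (-1)^{|\vec{n}|-1}$. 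Therefore $c = (-1)^{|\vec{n}|-1}$, which is \eqref{eq:LI}.

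The main obstacle I anticipate is the careful justification of interchanging the $x$-integration with the $t$-contour integration and the convergence bookkeeping near $x=0$ and $x=\infty$ (this is where the hypothesis $\operatorname{Re}(t)>-1$ on the contour, and the decay from $\operatorname{e}^{-x}$ built into the Laguerre weight, are essential); once Fubini is licensed, reducing everything to Lemma~\ref{IR_lem} plus a single residue computation is routine. A secondary subtlety is making precise the identity between the contour-integral moments and the genuine orthogonality functionals \eqref{ortogonalidadTipoIContinua}, i.e. checking that the $\operatorname{e}^{-x}$ factor is correctly accounted for so that $\int_0^\infty x^j L_{\vec n}^{(I)}(x)\,\operatorname{d}x$ (without an extra weight) is indeed what the representation computes.
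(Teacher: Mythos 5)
Your proposal follows essentially the same route as the paper's own proof: apply Proposition~\ref{LI_PC} with \(\varphi(t)=1/\Gamma(t+1)\), use the weighted Mellin transform \(\int_0^\infty \operatorname{e}^{-x}x^{t+j}\,\operatorname{d}x=\Gamma(t+j+1)\) so that the orthogonality conditions \eqref{ortogonalidadTipoIContinua} become vanishing of the contour integral against the polynomials \((t+1)_j\), invoke Lemma~\ref{IR_lem} to force \(p_{\sz{n}-1}\) to be constant, and fix the constant from the normalization via the residue at infinity. Aside from some mid-proof hedging (the ``absorb the exponential'' detour, which resolves correctly once you note the weight \(\operatorname{e}^{-x}\) is already part of \(w_i\)) and a loosely stated leading-coefficient ratio in the final residue computation, the argument matches the paper's and reaches the correct constant \((-1)^{\sz{n}-1}\).
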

\begin{proof}
According to Proposition \ref{LI_PC}, the linear form \( L_{\vec{n}}^{(I)}(x) \) is given by
\begin{align*}
 L_{\vec{n}}^{(I)}(x) = \int_{\Sigma} \frac{p_{\sz{n}-1}(t)}{\prod_{i=1}^p (\alpha_i - t)_{n_i}} \varphi(t) x^t \frac{\operatorname{d} t}{2 \pi \ii},
\end{align*}
where \( p_{\sz{n}-1}(t) \) is a polynomial of degree at most \( \sz{n} - 1 \), and \( \varphi \) is chosen below. Recalling \eqref{eq:Mellin_exp} and applying the weighted Mellin transform, we obtain
\begin{align*}
 \int_0^{\infty} L_{\vec{n}}^{(I)}(x) \operatorname{e}^{-x} x^s \operatorname{d} x = \int_\Sigma \frac{p_{\sz{n}-1}(t)}{\prod_{i=1}^p (\alpha_i - t)_{n_i}} \varphi(t) \Gamma(s + t + 1) \frac{\operatorname{d} t}{2 \pi \ii}.
\end{align*}
By choosing \( \varphi(t) = 1 / \Gamma(t + 1) \), for \( s \in \mathbb{N} \), a polynomial of degree \( s \) appears in the integrand. The orthogonality conditions \eqref{ortogonalidadTipoIContinua} then require
\begin{align*}
 \int_\Sigma \frac{p_{\sz{n}-1}(t)}{\prod_{i=1}^p (\alpha_i - t)_{n_i}} (t + 1)_s \frac{\operatorname{d} t}{2 \pi \ii} = 0, \quad s \in \{0, \ldots, \sz{n} - 2\},
\end{align*}
or equivalently,
\begin{align*}
 \int_\Sigma \frac{p_{\sz{n}-1}(t)}{\prod_{i=1}^p (\alpha_i - t)_{n_i}} q_{\sz{n}-2}(t) \frac{\operatorname{d} t}{2 \pi \ii} = 0,
\end{align*}
for every polynomial \( q_{\sz{n}-2}(t) \) of degree at most \( \sz{n} - 2 \). Hence, it follows from Lemma \ref{IR_lem} that \( p_{\sz{n}-1}(t)\) is a constant, say \( p_{\sz{n}-1}(t) = p_{\sz{n}-1}[0] \). The specific value is determined by the normalization condition
\begin{align*}
 \int_\Sigma \frac{p_{\sz{n}-1}[0]}{\prod_{j=1}^p (\alpha_i - t)_{n_i}} (t + 1)_{\sz{n} - 1} \frac{\operatorname{d} t}{2 \pi \ii} = 1.
\end{align*}
Since the integrand decays as \( \operatorname{O}(t^{-1}) \) as \( |t| \to \infty \), we evaluate the integral using the residue at infinity:
\begin{align*}
 \int_\Sigma \frac{p_{\sz{n}-1}[0]}{\prod_{i=1}^p (\alpha_i - t)_{n_i}} (t + 1)_{\sz{n} - 1} \frac{\operatorname{d} t}{2 \pi \ii} = - \lim_{|t| \to \infty} t \frac{p_{\sz{n}-1}[0]}{\prod_{j=1}^p (\alpha_i - t)_{n_i}} (t + 1)_{\sz{n} - 1} = (-1)^{\sz{n} - 1} p_{\sz{n}-1}[0].
\end{align*}
Thus, \( p_{\sz{n}-1}[0] = (-1)^{\sz{n} - 1} \).
\end{proof}

\begin{coro}
The computation of the contour integral \eqref{eq:LI}  leads to the identities given in \eqref{LaguerreITypeI}.
\end{coro}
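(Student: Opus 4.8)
The plan is to evaluate the contour integral on the right of \eqref{eq:LI} by the residue theorem and then to extract the individual type~I polynomials $L_{\vec{n}}^{(i)}$ by separating the powers $x^{\alpha_1},\dots,x^{\alpha_p}$. For fixed $x>0$ the integrand $\dfrac{x^{t}}{\Gamma(t+1)\prod_{i=1}^{p}(\alpha_i-t)_{n_i}}$ is meromorphic in $t$, since $1/\Gamma(t+1)$ and $x^{t}$ are entire; under the AT hypothesis $\alpha_i-\alpha_j\notin\mathbb Z$ ($i\neq j$) its only singularities enclosed by $\Sigma$ are the $|\vec{n}|$ simple poles $t=\alpha_i+k$, $k\in\{0,\dots,n_i-1\}$, $i\in\{1,\dots,p\}$. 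Applying the residue theorem is then exactly the computation already carried out in the proof of Proposition~\ref{LI_PC}, now with $p_{|\vec{n}|-1}\equiv(-1)^{|\vec{n}|-1}$ and $\varphi(t)=1/\Gamma(t+1)$, so that
\begin{align*}
 L_{\vec{n}}^{(I)}(x)=\sum_{i=1}^{p}x^{\alpha_i}\sum_{k=0}^{n_i-1}\frac{(-1)^{k+|\vec{n}|-1}}{(n_i-1-k)!\,k!\,\Gamma(\alpha_i+k+1)\prod_{j=1,j\neq i}^{p}(\alpha_j-\alpha_i-k)_{n_j}}\,x^{k}.
\end{align*}

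Because $\alpha_i-\alpha_j\notin\mathbb Z$, all the monomials $x^{\alpha_i+k}$ occurring here are pairwise distinct, hence linearly independent; comparing the above with $L_{\vec{n}}^{(I)}(x)=\sum_{i=1}^{p}L_{\vec{n}}^{(i)}(x)x^{\alpha_i}$ therefore forces
\begin{align*}
 L_{\vec{n}}^{(i)}(x)=\sum_{k=0}^{n_i-1}\frac{(-1)^{k+|\vec{n}|-1}}{(n_i-1-k)!\,k!\,\Gamma(\alpha_i+k+1)\prod_{j=1,j\neq i}^{p}(\alpha_j-\alpha_i-k)_{n_j}}\,x^{k},
\end{align*}
a polynomial of degree at most $n_i-1$, as required. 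The remaining task is to recast this finite sum as a ${}_{p}F_{p}$. For that I would use three elementary Pochhammer identities: $\Gamma(\alpha_i+k+1)=\Gamma(\alpha_i+1)(\alpha_i+1)_k$; the reversal $\dfrac{(-1)^{k}}{(n_i-1-k)!}=\dfrac{(-n_i+1)_k}{(n_i-1)!}$, which absorbs the sign and makes the sum truncate by itself at $k=n_i-1$ (so that it may be read as a sum over all $k\geq0$); and, for each $j\neq i$, the shift
\begin{align*}
 (\alpha_j-\alpha_i-k)_{n_j}=(\alpha_j-\alpha_i)_{n_j}\,\frac{(\alpha_i+1-\alpha_j)_k}{(\alpha_i+1-\alpha_j-n_j)_k},
\end{align*}
which follows from $(a-k)_n=(a-k)_k(a)_n/(a+n-k)_k$ together with $(b-k)_k=(-1)^k(1-b)_k$. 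Substituting these, the $k$-independent factors collect into the prefactor $(-1)^{|\vec{n}|-1}\big/\bigl((n_i-1)!\,\Gamma(\alpha_i+1)\prod_{k=1,k\neq i}^{p}(\alpha_k-\alpha_i)_{n_k}\bigr)$, while the $k$-dependent part becomes $\dfrac{(-n_i+1)_k\prod_{j\neq i}(\alpha_i+1-\alpha_j-n_j)_k}{(\alpha_i+1)_k\prod_{j\neq i}(\alpha_i+1-\alpha_j)_k}\,\dfrac{x^k}{k!}$, whose sum over $k\geq0$ is precisely
\begin{align*}
 \pFq{p}{p}{-n_i+1, (\alpha_i+1)\vec{e}_{p-1} - \vec{\alpha}^{\ast i} - \vec{n}^{\ast i}}{\alpha_i+1, (\alpha_i+1)\vec{e}_{p-1} - \vec{\alpha}^{\ast i}}{x}.
\end{align*}
This is exactly \eqref{LaguerreITypeI}.

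I do not expect a genuine obstacle: the residue step is quoted verbatim from Proposition~\ref{LI_PC}, and the rest is bookkeeping of signs and Pochhammer symbols, which is the only point that needs care. In fact the statement can also be obtained without any computation: by Theorem~\ref{LI_IR} the right-hand side of \eqref{eq:LI} is a valid type~I linear form for the Laguerre weights of the first kind, the form built from \eqref{LaguerreITypeI} is another such, and type~I forms are unique, so the two coincide — the corollary then merely records that the explicit residue evaluation of \eqref{eq:LI} reproduces the known closed form \eqref{LaguerreITypeI}.
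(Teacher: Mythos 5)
Your proposal is correct and follows essentially the same route as the paper: evaluate \eqref{eq:LI} by residues at the simple poles $t=\alpha_i+k$, read off the coefficients $L_{\vec{n}}^{(i)}[k]$, and convert the factorials and shifted Pochhammer symbols (via $(-1)^k/(n_i-1-k)!=(-n_i+1)_k/(n_i-1)!$ and $(\alpha_j-\alpha_i-k)_{n_j}=(\alpha_j-\alpha_i)_{n_j}(\alpha_i-\alpha_j+1)_k/(\alpha_i-\alpha_j-n_j+1)_k$) into the ${}_pF_p$ form of \eqref{LaguerreITypeI}. The closing remark about uniqueness of type I forms is a valid shortcut but not what the corollary records; the paper, like your main argument, carries out the explicit residue evaluation.
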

\begin{proof}
 The integrand only has simple poles at \(t=\alpha_i+k\) for \(k\in\{0,\ldots,n_i-1\}\) and \(i\in\{1,\ldots,p\}\). The residue at \(t=\alpha_i+k\) is given by
 \begin{align*}
  \frac{1}{\Gamma(k+\alpha_i+1)} &\frac{(-1)^k}{k! (n_i-k-1)!} \frac{1}{\prod_{j=1,j\neq i}^p(\alpha_j-\alpha_i-k)_{n_j}} x^{k+\alpha_i} \\
  & = \frac{1}{(\alpha_i+1)_k \Gamma(\alpha_i+1)} \frac{(-n_i+1)_k}{k!(n_i-1)!} 
  \prod_{j=1,j\neq i}^p\frac{(\alpha_i-\alpha_j-n_j+1)_k}{ (\alpha_i-\alpha_j+1)_k(\alpha_j-\alpha_i)_{n_j}}
x^{k+\alpha_i}.
 \end{align*}
 Consequently,
 \begin{align*}
  L_{\vec{n}}^{(i)}[k] = \frac{(-1)^{\sz{n}-1}}{(n_i-1)! \prod_{j=1,j\neq i}^p(\alpha_j-\alpha_i)_{n_j} \Gamma(\alpha_i+1)} \frac{(-n_i+1)_k}{(\alpha_i+1)_k }   \prod_{j=1,j\neq i}^p\frac{(\alpha_i-\alpha_j-n_j+1)_k}{ (\alpha_i-\alpha_j+1)_k}\frac{1}{k!} ,
 \end{align*}
 which leads to \eqref{LaguerreITypeI}.
\end{proof}

\subsubsection{Multiple Orthogonal Polynomials of Type II}
Our approach for the type II polynomials in \S \ref{S:Laguerre of the first kind} is based on our ability to explicitly compute their weighted Mellin transform. The result below for \(p=2\) was previously found in \cite[Lemma 4.2]{VAWolfs}, using a different approach.

\begin{pro}
 The Mellin transform of the functions \(L_{\vec{n}}(x) \operatorname{e}^{-x}\) is given by
\begin{align*}
\int_0^\infty L_{\vec{n}}(x) \operatorname{e}^{-x} x^{s-1} \operatorname d x = (-1)^{\sz{n}} \Gamma(s) \prod_{i=1}^p (\alpha_i+1-s)_{n_i}.
\end{align*}
\end{pro}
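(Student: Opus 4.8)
The plan is to recover the Mellin transform directly from the orthogonality conditions, in the same spirit as the argument used for the type~I linear form in Theorem~\ref{LI_IR}. Write $L_{\vec{n}}(x)=\sum_{j=0}^{|\vec{n}|}c_j\,x^j$; since the Laguerre weights form an AT system, $L_{\vec{n}}$ is monic of degree exactly $|\vec{n}|$, so $c_{|\vec{n}|}=1$. Integrating term by term with \eqref{eq:Mellin_exp} and using $\Gamma(s+j)=(s)_j\Gamma(s)$ gives, for $\Re(s)>0$,
\begin{align*}
\int_0^\infty L_{\vec{n}}(x)\operatorname{e}^{-x}x^{s-1}\operatorname{d}x=\sum_{j=0}^{|\vec{n}|}c_j\,\Gamma(s+j)=\Gamma(s)\,g(s), \qquad g(s)=\sum_{j=0}^{|\vec{n}|}c_j\,(s)_j ,
\end{align*}
where $g$ is a polynomial in $s$ of degree $|\vec{n}|$ with leading coefficient $c_{|\vec{n}|}=1$.

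Next I would locate the zeros of $g$. Since $w_i(x)=\operatorname{e}^{-x}x^{\alpha_i}$, the type~II relations \eqref{ortogonalidadTipoIIContinua} say exactly that $\Gamma(s)g(s)$ vanishes at $s=\alpha_i+1,\alpha_i+2,\ldots,\alpha_i+n_i$ for every $i\in\{1,\ldots,p\}$; since $\Gamma$ is zero-free, $g$ vanishes at all of these $|\vec{n}|$ points, which are pairwise distinct by the AT hypothesis $\alpha_i-\alpha_j\notin\mathbb{Z}$ for $i\neq j$. A polynomial of degree $|\vec{n}|$ with $|\vec{n}|$ prescribed simple zeros is determined up to its leading coefficient, so matching leading coefficients gives
\begin{align*}
g(s)=\prod_{i=1}^p\prod_{k=1}^{n_i}(s-\alpha_i-k)=(-1)^{|\vec{n}|}\prod_{i=1}^p(\alpha_i+1-s)_{n_i} .
\end{align*}
Hence $\int_0^\infty L_{\vec{n}}(x)\operatorname{e}^{-x}x^{s-1}\operatorname{d}x=(-1)^{|\vec{n}|}\Gamma(s)\prod_{i=1}^p(\alpha_i+1-s)_{n_i}$, first for $\Re(s)>0$ and then for all $s$ by analytic continuation of both sides, which is the assertion.

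I do not anticipate a genuine obstacle here: the only points needing a word are perfectness of the system (so that $g$ truly has degree $|\vec{n}|$, equivalently $L_{\vec{n}}$ is honestly monic of degree $|\vec{n}|$) and distinctness of the $|\vec{n}|$ evaluation points, both guaranteed by $\alpha_i-\alpha_j\notin\mathbb{Z}$. As an alternative, one may insert the closed form \eqref{LaguerreFKTypeIIWeighted}, use $\operatorname{e}^{x}\operatorname{e}^{-x}=1$ to obtain $L_{\vec{n}}(x)\operatorname{e}^{-x}=(-1)^{|\vec{n}|}\prod_{q=1}^p(\alpha_q+1)_{n_q}\,\pFq{p}{p}{\vec{\alpha}+\vec{n}+\vec{e}_p}{\vec{\alpha}+\vec{e}_p}{-x}$, and then compute the Mellin transform of that ${}_pF_p$ (via Ramanujan's master theorem, or a Mellin--Barnes representation), the factor $\prod_q(\alpha_q+1)_{n_q}$ cancelling at the end; on that route the one delicate step is justifying the termwise integration in the borderline case of equally many numerator and denominator parameters, which is legitimate because, by Stirling's formula \eqref{eq:Stirling}, the ratio of consecutive coefficients of the ${}_pF_p$ grows only polynomially.
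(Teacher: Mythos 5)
Your argument is essentially the paper's own proof: expand $L_{\vec{n}}$ in powers of $x$, factor out $\Gamma(s)$ to obtain a degree-$|\vec{n}|$ polynomial in $s$, read off its $|\vec{n}|$ zeros from the type II orthogonality conditions, and fix the constant by monicity. The extra remarks (zero-freeness of $\Gamma$, distinctness of the zeros under the AT hypothesis, analytic continuation) are correct refinements of details the paper leaves implicit, and the sketched alternative via the closed form \eqref{LaguerreFKTypeIIWeighted} is not needed.
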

\begin{proof}
For the LHS, Equation \eqref{eq:Mellin_exp} and the expression \(L_{\vec{n}}(x) = \sum_{k=0}^{\sz{n}} L_{\vec{n}}[k] x^k\) leads to
\begin{align*}
\sum_{k=0}^{\sz{n}} L_{\vec{n}}[k] \int_0^\infty x^{s+k-1} \operatorname{e}^{-x} \operatorname d x = \sum_{k=0}^{\sz{n}} L_{\vec{n}}[k] \Gamma(s+k).
\end{align*}
Let us take out the common factor \( \Gamma(s)\) to obtain
\begin{align*}
 \Gamma(s) \sum_{k=0}^{\sz{n}} L_{\vec{n}}[k] (s)_k = \Gamma(s) p_{\sz{n}}(s),
 \end{align*}
 in terms of some polynomial \(p_{\sz{n}}(s)\) of degree at most \( \sz{n}\). It then follows from the orthogonality conditions~\eqref{ortogonalidadTipoIIContinua} that \(p_{\sz{n}}(s)\) needs to vanish at the \( \sz{n}\) values \(k+\alpha_i\) for \(k\in\{1,\ldots,n_i\}\) and \( i\in\{1,\ldots,p\}\). Hence, \(p_{\sz{n}}(s)\) must be a scalar multiple of \( \prod_{i=1}^p (\alpha_i+1-s)_{n_i}\). The fact that \(L_{\vec{n}}(x)\) is monic, i.e. \(L_{\vec{n}}[\sz{n}]=1\), then implies that \(\displaystyle
 p_{\sz{n}}(s) = (-1)^{\sz{n}} \prod_{i=1}^p (\alpha_i+1-s)_{n_i} \).
\end{proof}

We employ the inverse Mellin transform to derive the following integral representation for the weighted type II polynomials. Notice the similarities with the contour integral used for the type I linear form in Theorem~\ref{LI_IR}.

\begin{teo} \label{LII_IMT}
For \(x\in (0,\infty)\), the function \(L_{\vec{n}}(x) \operatorname{e}^{-x}\) admits the following integral representation:
\begin{align}\label{eq:LII}
 L_{\vec{n}}(x) \operatorname{e}^{-x} = (-1)^{\sz{n}} \int_{\mathcal{C}} \Gamma(s) \prod_{i=1}^p (\alpha_i+1-s)_{n_i} x^{-s} \frac{ \operatorname d s }{2 \pi \ii },
 \end{align}
 where \( \mathcal{C}\) is an anticlockwise contour enclosing \( (-\infty,0]\) once. 
\end{teo}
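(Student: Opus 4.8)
The statement to prove (Theorem~\ref{LII_IMT}) asserts an integral representation for $L_{\vec n}(x)\operatorname e^{-x}$, and the natural route is to apply the inverse Mellin transform to the formula just established in the preceding proposition, namely
\[
\int_0^\infty L_{\vec n}(x)\operatorname e^{-x}x^{s-1}\operatorname d x=(-1)^{\sz n}\Gamma(s)\prod_{i=1}^p(\alpha_i+1-s)_{n_i}.
\]
So the first step is to identify $(\mathcal M(L_{\vec n}\operatorname e^{-x}))(s)=(-1)^{\sz n}\Gamma(s)\prod_{i=1}^p(\alpha_i+1-s)_{n_i}$ and locate the fundamental strip where it is analytic: $\Gamma(s)$ is analytic for $\Re(s)>0$, the polynomial factor $\prod_i(\alpha_i+1-s)_{n_i}$ is entire, so the transform is analytic at least on $\{0<\Re(s)\}$, and in fact the function $L_{\vec n}(x)\operatorname e^{-x}$ is in $L^1(\R_{\geq0})$ with a decent fundamental strip containing $(0,\infty)$ on the real axis. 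The second step is to invoke the inverse Mellin transform formula quoted in the excerpt, $f(x)=\int_{c-\ii\infty}^{c+\ii\infty}(\mathcal Mf)(s)x^{-s}\frac{\operatorname d s}{2\pi\ii}$ for $c$ in the fundamental strip, which yields
\[
L_{\vec n}(x)\operatorname e^{-x}=(-1)^{\sz n}\int_{c-\ii\infty}^{c+\ii\infty}\Gamma(s)\prod_{i=1}^p(\alpha_i+1-s)_{n_i}\,x^{-s}\frac{\operatorname d s}{2\pi\ii},\qquad c>0.
\]

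**Deforming to the contour $\mathcal C$.** The remaining step — and the one requiring the most care — is to replace the vertical line $\Re(s)=c$ by an anticlockwise contour $\mathcal C$ enclosing $(-\infty,0]$ once. The integrand $\Gamma(s)\prod_i(\alpha_i+1-s)_{n_i}x^{-s}$ has poles only at the nonpositive integers $s=0,-1,-2,\ldots$ (the simple poles of $\Gamma$), since the polynomial factor introduces no singularities; all these poles lie in $(-\infty,0]$. One shows that the integral over the line $\Re(s)=c$ equals the integral over $\mathcal C$ by a contour-shift argument: close the line contour to the left with large semicircular (or rectangular) arcs and check that the contributions of the connecting arcs vanish in the limit. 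Here the Stirling estimate \eqref{eq:Stirling} is essential: on vertical segments $\Gamma(s)$ decays like $\operatorname e^{-\frac{\pi}{2}|\Im s|}$, which controls the polynomial growth $\prod_i(\alpha_i+1-s)_{n_i}=O(|s|^{\sz n})$ and the factor $x^{-s}$, so the arcs at infinity contribute nothing. After the deformation the two integrals pick up the same residues (those at $s=0,-1,-2,\dots$), hence are equal, and one arrives at the claimed representation with $\mathcal C$ any anticlockwise loop around $(-\infty,0]$.

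**Main obstacle.** The technical heart is the justification of the contour deformation to an \emph{unbounded} contour $\mathcal C$ around the half-line $(-\infty,0]$: one must be careful that the tails of $\mathcal C$ going off to $-\infty$ are handled consistently with the convergence of the original line integral, i.e. that for fixed $x>0$ the integrand is genuinely integrable along $\mathcal C$ and that pushing the vertical line arbitrarily far left does not leave residual boundary terms. Since $x^{-s}=x^{-\Re(s)}\operatorname e^{-\ii\Im(s)\log x}$ grows like $x^{|\Re s|}$ as $\Re(s)\to-\infty$ when $x>1$, the "enclosing $(-\infty,0]$ once" contour should be taken to hug the negative real axis closely (so that $\Re(s)$ stays bounded on its tails while only $\Im(s)$ stays small), and then the $\operatorname e^{-\frac\pi2|\Im s|}$ decay of $\Gamma$ near the real axis is not available — instead one uses that near the cut the product $\Gamma(s)\prod_i(\alpha_i+1-s)_{n_i}$ itself decays (the reflection formula $\Gamma(s)\Gamma(1-s)=\pi/\sin\pi s$ shows $\Gamma(s)$ decays like $1/\Gamma(1-s)$ along the negative axis, beating the polynomial factor once $\sz n$ is fixed). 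Making this precise — choosing $\mathcal C$ appropriately and estimating the integrand on its two tails — is the one place where genuine work is needed; everything else is bookkeeping with the inverse Mellin transform and the residue theorem.
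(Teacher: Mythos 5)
Your proposal is correct and takes essentially the same route as the paper: the paper states this theorem without a written proof, the intended argument being exactly Mellin inversion applied to the preceding Proposition followed by deformation of the vertical line onto a contour around \((-\infty,0]\). The convergence details you supply (exponential decay of \(\Gamma(s)\) on vertical lines via Stirling, and decay along the tails hugging the negative real axis via the reflection formula) are precisely the estimates the paper itself invokes in the subsequent corollary, where it replaces \(\int_{\mathcal C}\) by \(\lim_{M\to\infty}\int_{\mathcal C_M}\) over rectangles with vertices \(-M\pm\ii/2\) and \(1/2\pm\ii/2\) and uses \eqref{eq:Stirling}.
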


\begin{coro}
The explicit hypergeometric expression in \eqref{LaguerreFKTypeIIWeighted} follows from the evaluation  of the contour integral  \eqref{eq:LII}.
\end{coro}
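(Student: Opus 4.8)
The plan is to evaluate the contour integral in \eqref{eq:LII} by the residue theorem. Fix $x\in(0,\infty)$. Because $\prod_{i=1}^p(\alpha_i+1-s)_{n_i}$ is a polynomial in $s$ and $s\mapsto x^{-s}$ is entire, the integrand $\Gamma(s)\prod_{i=1}^p(\alpha_i+1-s)_{n_i}x^{-s}$ is meromorphic with singularities exactly at the simple poles $s=-k$, $k\in\mathbb{N}_0$, of the gamma factor, where $\operatorname{Res}_{s=-k}\Gamma(s)=(-1)^k/k!$. These are precisely the poles enclosed by $\mathcal C$, so, summing residues,
\[
L_{\vec n}(x)\operatorname{e}^{-x}=(-1)^{|\vec n|}\sum_{k=0}^{\infty}\frac{(-1)^k}{k!}\Bigl(\prod_{i=1}^p(\alpha_i+1+k)_{n_i}\Bigr)x^{k}.
\]

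To pass from the integral to this series I would truncate $\mathcal C$ to a finite loop around $[-K,0]$ and estimate the integrand on the connecting arcs: by Stirling's formula \eqref{eq:Stirling}, $|\Gamma(s)|$ decays faster than any power of $|s|$ away from the negative real axis, which overwhelms the polynomial growth of $\prod_{i=1}^p(\alpha_i+1-s)_{n_i}$ and the factor $|x^{-s}|=x^{-\operatorname{Re}s}$ (with $x$ in a fixed compact subset of $(0,\infty)$), so the arc contributions vanish as $K\to\infty$ and the residue theorem applies with the anticlockwise orientation of $\mathcal C$.

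It then remains to simplify. Writing $(\alpha_i+1+k)_{n_i}=(\alpha_i+1)_{n_i}\,(\alpha_i+n_i+1)_k/(\alpha_i+1)_k$, pulling the $k$-independent constant $\prod_{i=1}^p(\alpha_i+1)_{n_i}$ out of the sum, and recognising the remaining series as
\[
\sum_{k=0}^{\infty}\frac{\prod_{i=1}^p(\alpha_i+n_i+1)_k}{\prod_{i=1}^p(\alpha_i+1)_k}\,\frac{(-x)^k}{k!}=\pFq{p}{p}{\vec{\alpha}+\vec{n}+\vec{e}_p}{\vec{\alpha}+\vec{e}_p}{-x}
\]
(which converges for all $x$, having equally many numerator and denominator parameters), one obtains $L_{\vec n}(x)\operatorname{e}^{-x}=(-1)^{|\vec n|}\prod_{i=1}^p(\alpha_i+1)_{n_i}\,\pFq{p}{p}{\vec{\alpha}+\vec{n}+\vec{e}_p}{\vec{\alpha}+\vec{e}_p}{-x}$. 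Multiplying by $\operatorname{e}^{x}$ then gives exactly \eqref{LaguerreFKTypeIIWeighted}.

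The only genuinely non-formal step is the arc estimate legitimising the exchange of the unbounded contour integral for the infinite residue sum; everything after it is bookkeeping with Pochhammer symbols. (If one prefers, the exchange can be sidestepped by observing that the residue sum defines a function whose weighted Mellin transform is $(-1)^{|\vec n|}\Gamma(s)\prod_{i=1}^p(\alpha_i+1-s)_{n_i}$, which is the content used to obtain Theorem~\ref{LII_IMT} in the first place.)
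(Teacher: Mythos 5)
Your proof is correct and follows essentially the same route as the paper: sum the residues of $\Gamma(s)$ at $s=-k$, justify the interchange of the contour integral with the infinite residue sum by a Stirling estimate on truncating contours, and rewrite $(\alpha_i+1+k)_{n_i}=(\alpha_i+1)_{n_i}(\alpha_i+n_i+1)_k/(\alpha_i+1)_k$ to recognise the ${}_pF_p$. One small caution on the arc estimate: on the left edge $\operatorname{Re}(s)=-K$ the factor $|x^{-s}|=x^{K}$ grows exponentially for $x>1$, so ``decays faster than any power of $|s|$'' is not by itself sufficient --- what is needed (and what Stirling, via the reflection formula, actually gives, and what the paper uses) is the factorial-type decay $|\Gamma(-K+\ii\tau)|\sim C\,K^{-K-1/2}\operatorname{e}^{K}$, which does overwhelm $x^{K}$.
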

\begin{proof}
 The integrand only has simple poles at \(s=-k\) for \( k\in \mathbb{N}_0\). The residue at \(t=-k\) is given by
 \begin{align*}
  \frac{(-1)^k}{k!} \prod_{i=1}^p (\alpha_i+1+k)_{n_i} x^k = 
  \prod_{i=1}^p 
  \frac{(\alpha_i+1)_{n_i}  (n_i+\alpha_i+1)_k}{(\alpha_i+1)_k}  
  \frac{(-x)^k}{k!}.
 \end{align*}
 To establish \eqref{LaguerreFKTypeIIWeighted}, it then remains to note that we may replace the contour integral \(\int_{\mathcal{C}}\) by \(\lim_{M\to\infty} \int_{\mathcal{C}_M}\) in terms of anticlockwise oriented rectangles \(\mathcal{C}_M\) that connect the vertices \(-M\pm \operatorname i/2\) and \(1/2\pm \operatorname i/2\). We may do this because Stirling's formula \eqref{eq:Stirling} implies there exists  $K>0$ such  that \(|\Gamma(-M+ \operatorname i s)| \sim K M^{-M-1/2}\operatorname e^{M} \) as \(M\to\infty \) 
 and thus
 \begin{align*}
  \lim_{M\to\infty} \int_{-M-\frac{\operatorname i}{2}}^{-M+\frac{\operatorname i}{2}} \Gamma(s) \prod_{i=1}^p (\alpha_i+1-s)_{n_i} x^{-s} \frac{ \operatorname d s }{2 \pi \ii } = 0.
 \end{align*}
 
\end{proof}

\subsection{Jacobi--Piñeiro Multiple Orthogonal Polynomials}

Now we follow \S \ref{S:Jacobi-Piñeiro}.

\subsubsection{Multiple Orthogonal Polynomials of Type I}

The counterpart of Theorem \ref{LI_IR} in this context is as follows.

\begin{teo} \label{JI_IR}
Let \( \Sigma \subset \mathbb{C} \) be as in Proposition \ref{LI_PC} and take \( \operatorname{Re}(t) > -1 \). Then the linear form \begin{align*}
P_{\vec{n}}^{(I)}(x)\coloneq\sum_{i=1}^p P_{\vec{n}}^{(i)}(x) x^{\alpha_i}
\end{align*}
 is given by
\begin{align}\label{eq:JPI}
 P_{\vec{n}}^{(I)}(x)= (-1)^{\sz{n}-1} \frac{\prod_{i=1}^p (\alpha_i+\beta+\sz{n})_{n_i}}{\Gamma(\sz{n}+\beta)} \int_\Sigma \frac{\Gamma(t+\beta+\sz{n})}{\Gamma(t+1)} \frac{x^t}{\prod_{i=1}^p(\alpha_i-t)_{n_i}} \frac{ \operatorname d t }{2 \pi \ii }.
 \end{align}
\end{teo}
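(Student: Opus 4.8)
The plan is to mirror the proof of Theorem~\ref{LI_IR} for the Laguerre case, replacing the Mellin transform of $\operatorname{e}^{-x}$ with the Mellin transform of the Jacobi weight factor $(1-x)^\beta$ on $(0,1)$, recorded in \eqref{eq:Mellin_beta}. First I would invoke Proposition~\ref{LI_PC} (whose hypotheses are satisfied since $\Sigma$ encloses $\mathcal I = \cup_{i=1}^p[\alpha_i,n_i+\alpha_i-1]$ and the yet-to-be-chosen $\varphi$ will be analytic inside $\Sigma$ and nonvanishing on $\mathcal I$) to write
\begin{align*}
 P_{\vec{n}}^{(I)}(x) = \int_{\Sigma}\frac{p_{\sz{n}-1}(t)}{\prod_{i=1}^p(\alpha_i-t)_{n_i}}\varphi(t)\,x^t\,\frac{\operatorname d t}{2\pi\ii},
\end{align*}
for some polynomial $p_{\sz{n}-1}$ of degree at most $\sz{n}-1$.

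Next I would apply the weighted Mellin transform against the Jacobi weight: using \eqref{eq:Mellin_beta} (with the shift $s\mapsto s+t$), for each $t$ with $\operatorname{Re}(t)>-1$,
\begin{align*}
 \int_0^1 P_{\vec{n}}^{(I)}(x)\,(1-x)^\beta x^{s-1}\,\operatorname d x = \Gamma(\beta+1)\int_{\Sigma}\frac{p_{\sz{n}-1}(t)}{\prod_{i=1}^p(\alpha_i-t)_{n_i}}\varphi(t)\,\frac{\Gamma(s+t)}{\Gamma(s+t+\beta+1)}\,\frac{\operatorname d t}{2\pi\ii},
\end{align*}
where exchanging the order of integration is justified by absolute convergence on the compact contour. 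Now I would choose $\varphi(t) = \Gamma(t+\beta+\sz{n})/\Gamma(t+1)$, which is analytic for $\operatorname{Re}(t)>-1$ (in particular inside $\Sigma$) and nonvanishing on $\mathcal I$. With $s=j+1$, $j\in\{0,\ldots,\sz{n}-1\}$, the factor $\Gamma(s+t)/\big(\Gamma(t+1)\Gamma(s+t+\beta+1)\big)\cdot\Gamma(t+\beta+\sz{n}) = (t+1)_j\cdot(t+\beta+j+1)_{\sz{n}-1-j}$ becomes a polynomial in $t$ of degree exactly $\sz{n}-1$. The orthogonality relations \eqref{ortogonalidadTipoIContinua} for $j\in\{0,\ldots,\sz{n}-2\}$ then force
\begin{align*}
 \int_{\Sigma}\frac{p_{\sz{n}-1}(t)}{\prod_{i=1}^p(\alpha_i-t)_{n_i}}\,q_{\sz{n}-2}(t)\,\frac{\operatorname d t}{2\pi\ii}=0
\end{align*}
for every polynomial $q_{\sz{n}-2}$ of degree at most $\sz{n}-2$ (the polynomials $(t+1)_j(t+\beta+j+1)_{\sz{n}-1-j}$ for $0\le j\le\sz{n}-2$ span this space), so Lemma~\ref{IR_lem} gives that $p_{\sz{n}-1}(t)=p_{\sz{n}-1}[0]$ is constant.

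Finally I would fix the constant via the normalization $j=\sz{n}-1$: the corresponding polynomial is $(t+1)_{\sz{n}-1}$, and the condition reads
\begin{align*}
 \Gamma(\beta+1)\,p_{\sz{n}-1}[0]\int_{\Sigma}\frac{(t+1)_{\sz{n}-1}}{\prod_{i=1}^p(\alpha_i-t)_{n_i}}\,\frac{\operatorname d t}{2\pi\ii}=1.
\end{align*}
Since the integrand decays like $\operatorname O(t^{-1})$ as $|t|\to\infty$, evaluating by the residue at infinity gives $\int_{\Sigma}(t+1)_{\sz{n}-1}/\prod_i(\alpha_i-t)_{n_i}\,\operatorname d t/(2\pi\ii) = (-1)^{\sz{n}-1}$, exactly as in Theorem~\ref{LI_IR}. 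Hence $p_{\sz{n}-1}[0] = (-1)^{\sz{n}-1}/\Gamma(\beta+1)$. Absorbing $\Gamma(\beta+1)$ and rewriting $\varphi$, and comparing with the claimed prefactor $(-1)^{\sz{n}-1}\prod_i(\alpha_i+\beta+\sz{n})_{n_i}/\Gamma(\sz{n}+\beta)$, one reconciles the normalization constants using $\Gamma(t+\beta+\sz{n})$ versus the constant $\Gamma(\beta+1)$ — this bookkeeping, together with checking that the choice of $\varphi$ indeed makes all the $j$-dependent integrands polynomial, is the only delicate point; the main obstacle is really just getting the shift in \eqref{eq:Mellin_beta} and the Pochhammer arithmetic right so that the constant collapses to the stated form. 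No interchange-of-limits subtlety arises because $\Sigma$ is compact and the normalization integral is handled by the residue at infinity exactly as before.
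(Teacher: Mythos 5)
Your strategy is the same as the paper's (Proposition \ref{LI_PC}, weighted Mellin transform against \((1-x)^\beta\) via \eqref{eq:Mellin_beta}, the choice \(\varphi(t)=\Gamma(t+\beta+\sz{n})/\Gamma(t+1)\) up to the constant \(\Gamma(\beta+1)\), and Lemma \ref{IR_lem}), but your normalization step contains a genuine error. With \(s=j+1\) the Mellin factor is \(\Gamma(t+j+1)/\bigl(\Gamma(t+1)\Gamma(t+\beta+j+2)\bigr)\), so multiplying by \(\Gamma(t+\beta+\sz{n})\) gives
\[
(t+1)_j\,(t+\beta+j+2)_{\sz{n}-2-j},
\]
a polynomial of degree \(\sz{n}-2\) (not \(\sz{n}-1\)) for \(j\in\{0,\ldots,\sz{n}-2\}\). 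Crucially, for \(j=\sz{n}-1\) the second factor is \((t+\beta+\sz{n}+1)_{-1}=1/(t+\beta+\sz{n})\): the normalization integrand is \emph{not} the polynomial \((t+1)_{\sz{n}-1}\) but the rational function \((t+1)_{\sz{n}-1}/(t+\sz{n}+\beta)\), which has a simple pole at \(t=-\sz{n}-\beta\) outside \(\Sigma\). Consequently the integral cannot be evaluated by the residue at infinity (the integrand is \(\operatorname{O}(|t|^{-2})\), so that residue vanishes); it is computed from the residue at \(t=-\sz{n}-\beta\), where \(\prod_{i=1}^p(\alpha_i-t)_{n_i}\) evaluates to \(\prod_{i=1}^p(\alpha_i+\beta+\sz{n})_{n_i}\) and \((t+1)_{\sz{n}-1}\) to \((-\sz{n}-\beta+1)_{\sz{n}-1}=(-1)^{\sz{n}-1}(\beta+1)_{\sz{n}-1}\). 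This is precisely where the factor \(\prod_{i=1}^p(\alpha_i+\beta+\sz{n})_{n_i}\) in the stated prefactor comes from; your value \(p_{\sz{n}-1}[0]=(-1)^{\sz{n}-1}/\Gamma(\beta+1)\) omits it and cannot be ``reconciled'' with \eqref{eq:JPI} by bookkeeping. The correct normalization gives
\[
p_{\sz{n}-1}[0]=(-1)^{\sz{n}-1}\,\frac{\prod_{i=1}^p(\alpha_i+\beta+\sz{n})_{n_i}}{\Gamma(\beta+1)\,(\beta+1)_{\sz{n}-1}}
=(-1)^{\sz{n}-1}\,\frac{\prod_{i=1}^p(\alpha_i+\beta+\sz{n})_{n_i}}{\Gamma(\sz{n}+\beta)}
\]
in your convention, which matches the theorem. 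The rest of your argument (the span of the degree-\(\le\sz{n}-2\) polynomials and the application of Lemma \ref{IR_lem}) goes through once the degree count is corrected.
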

\begin{proof} 
Using Proposition \ref{LI_PC}, we express \(P_{\vec{n}}^{(I)}(x)\) as
\begin{align*}
P_{\vec{n}}^{(I)}(x) = \int_{\Sigma}\frac{p_{\sz{n}-1}(t)}{\prod_{i=1}^p (\alpha_i-t)_{n_i}} \varphi (t) x^t \frac{ \operatorname d t }{2 \pi \ii },
\end{align*}
where \(p_{\sz{n}-1}(t)\) is a polynomial of degree at most \( \sz{n}-1\) and \(\varphi(t)\) is chosen below. After performing the weighted Mellin transform, we obtain
\begin{align*}
\int_0^1 P_{\vec{n}}^{(I)}(x) (1-x)^{\beta} x^s \operatorname d x = \int_\Sigma\frac{p_{\sz{n}-1}(t)}{\prod_{i=1}^p (\alpha_i-t)_{n_i}} \varphi (t) \frac{\Gamma(s+t+1)\Gamma(\beta+1)}{\Gamma(s+t+\beta+2)} \frac{ \operatorname d t }{2 \pi \ii },
\end{align*}
were \eqref{eq:Mellin_beta} has been used.

The orthogonality conditions \eqref{ortogonalidadTipoIContinua} are then given by
\begin{align*}
\int_\Sigma\frac{p_{\sz{n}-1}(t)}{\prod_{i=1}^p (\alpha_i-t)_{n_i}} \varphi (t) \frac{\Gamma(s+t+1)\Gamma(\beta+1)}{\Gamma(s+t+\beta+2)} \frac{ \operatorname d t }{2 \pi \ii } = 0,\quad s\in\{0,\ldots,\sz{n}-2\}.
\end{align*}
Setting \( \varphi (t) = \Gamma(t+\beta+\sz{n})/(\Gamma(t+1)\Gamma(\beta+1)) \) is convenient here, because for \( s\in \{0,\ldots,\sz{n}-2\} \) the integrand involves the polynomial \((t+1)_s (s+t+\beta+2)_{\sz{n}-2-s}\). The conditions for these integrals to vanish then require
\begin{align*}
\int_\Sigma \frac{p_{\sz{n}-1}(t)}{\prod_{i=1}^p (\alpha_i-t)_{n_i}} q_{\sz{n}-2}(t) \frac{ \operatorname d t }{2 \pi \ii } = 0,
\end{align*}
for every polynomial \(q_{\sz{n}-2}(t)\) of degree at most \( \sz{n}-2\). In that case, Lemma \ref{IR_lem} implies that \(p_{\sz{n}-1}(t)\) is a constant, say \(p_{\sz{n}-1}(t) = p_{\sz{n}-1}[0]\). The precise value of \(p_{\sz{n}-1}[0]\) follows from the normalization condition
\begin{align*}
\int_\Sigma \frac{p_{\sz{n}-1}[0]}{\prod_{i=1}^p (\alpha_i-t)_{n_i}} \frac{(t+1)_{\sz{n}-1}}{t+\sz{n}+\beta} \frac{ \operatorname d t }{2 \pi \ii } = 1.
\end{align*}
Since the integrand behaves like
\(\operatorname{O}(|t|^{-2})\) as \( |t|\to\infty \), we evaluate the integral using the residue at \( t=-\sz{n}-\beta \) (note that $-\sz{n}-\beta<-1$ for \( \sz{n}\geq 2 \)):
\begin{align*}
\int_\Sigma \frac{p_{\sz{n}-1}[0]}{\prod_{i=1}^p (\alpha_i-t)_{n_i}} \frac{(t+1)_{\sz{n}-1}}{t+\sz{n}+\beta} \frac{ \operatorname d t }{2 \pi \ii } = \frac{p_{\sz{n}-1}[0]}{\prod_{i=1}^p (\alpha_i+\beta+\sz{n})_{n_i}} (-\sz{n}-\beta+1)_{\sz{n}-1}.
\end{align*}
Thus, \(\displaystyle
p_{\sz{n}-1}[0] = (-1)^{\sz{n}-1} \frac{\prod_{i=1}^p (\alpha_i+\beta+\sz{n})_{n_i}}{(\beta+1)_{\sz{n}-1}}\).
\end{proof}

\begin{coro}
	The explicit expressions \eqref{JPTypeI} are recovered from the evaluation of the contour integral \eqref{eq:JPI}.
\end{coro}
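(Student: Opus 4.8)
The plan is to evaluate the contour integral \eqref{eq:JPI} by residues, following the same strategy used for the Laguerre corollary after Theorem~\ref{LI_IR}. First I would observe that, inside and on $\Sigma$, the integrand of \eqref{eq:JPI} has simple poles only at $t=\alpha_i+k$ for $k\in\{0,\ldots,n_i-1\}$ and $i\in\{1,\ldots,p\}$, coming from the zeros of $\prod_{i=1}^p(\alpha_i-t)_{n_i}$: indeed $1/\Gamma(t+1)$ is entire, and $\varphi(t)=\Gamma(t+\beta+\sz{n})/(\Gamma(t+1)\Gamma(\beta+1))$ was chosen analytic inside and on $\Sigma$ in the proof of Theorem~\ref{JI_IR}. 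Since $\Sigma$ is a compact contour enclosing the bounded cluster $\cup_{i=1}^p[\alpha_i,n_i+\alpha_i-1]$, no residue-at-infinity argument is needed here.

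Next I would compute the residue at $t=\alpha_i+k$. Pulling the vanishing factor $(\alpha_i-t+k)$, whose $t$-derivative is $-1$, out of $\prod_{j=1}^p(\alpha_j-t)_{n_j}$ produces, exactly as in the proof of Proposition~\ref{LI_PC}, the combinatorial factor
\[
\frac{(-1)^k}{k!\,(n_i-1-k)!\,\prod_{j=1,j\neq i}^p(\alpha_j-\alpha_i-k)_{n_j}},
\]
while the remaining part of the integrand contributes
\[
\frac{\Gamma(\alpha_i+k+\beta+\sz{n})}{\Gamma(\alpha_i+k+1)}\,x^{\alpha_i+k}=\frac{(\alpha_i+\beta+\sz{n})_k}{(\alpha_i+1)_k}\,\frac{\Gamma(\alpha_i+\beta+\sz{n})}{\Gamma(\alpha_i+1)}\,x^{\alpha_i+k}.
\]
Multiplying by the prefactor $(-1)^{\sz{n}-1}\prod_{i=1}^p(\alpha_i+\beta+\sz{n})_{n_i}/\Gamma(\sz{n}+\beta)$ from \eqref{eq:JPI} and summing over all poles, the residue theorem expresses $P_{\vec n}^{(I)}(x)$ as $\sum_{i=1}^p x^{\alpha_i}$ times an inner sum over $k\in\{0,\ldots,n_i-1\}$; matching the $x^{\alpha_i}$-components, which is legitimate because $\alpha_i-\alpha_j\notin\mathbb{Z}$, identifies each inner sum with $P_{\vec n}^{(i)}(x)$.

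To recognise that inner sum as the terminating ${}_{p+1}F_p$ in \eqref{JPTypeI}, I would then rewrite the Pochhammer data using
\[
\frac{(-1)^k}{k!\,(n_i-1-k)!}=\frac{(-n_i+1)_k}{k!\,(n_i-1)!},\qquad\frac{1}{(\alpha_j-\alpha_i-k)_{n_j}}=\frac{(\alpha_i-\alpha_j-n_j+1)_k}{(\alpha_i-\alpha_j+1)_k\,(\alpha_j-\alpha_i)_{n_j}},
\]
so that the $k$-dependent part of the $i$-th inner sum becomes
\[
\frac{(-n_i+1)_k\,(\alpha_i+\beta+\sz{n})_k}{(\alpha_i+1)_k}\prod_{j=1,j\neq i}^p\frac{(\alpha_i-\alpha_j-n_j+1)_k}{(\alpha_i-\alpha_j+1)_k}\,\frac{x^k}{k!}.
\]
Since $(-n_i+1)_k=0$ for $k\geq n_i$, extending the summation to all $k\geq 0$ turns the inner sum into the hypergeometric series in \eqref{JPTypeI}; the surviving $k$-independent factors, after recombining the gamma functions into $\Gamma(\alpha_i+\beta+\sz{n})$, $\Gamma(\sz{n}+\beta)$ and $\Gamma(\alpha_i+1)$, reproduce exactly the constant prefactor in \eqref{JPTypeI}.

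The only mildly delicate point I anticipate is the sign and normalisation bookkeeping in passing from the residues to the Pochhammer symbols and in collecting the gamma factors; everything else is a routine computation, essentially identical to the Laguerre case treated just above.
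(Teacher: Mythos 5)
Your proposal is correct and follows essentially the same route as the paper: identify the simple poles at $t=\alpha_i+k$ inside $\Sigma$ (the poles of $\Gamma(t+\beta+|\vec n|)$ lying outside since $\operatorname{Re}(t)>-1$), compute the residues, and convert the factorials and shifted Pochhammer symbols via $\tfrac{(-1)^k}{k!(n_i-1-k)!}=\tfrac{(-n_i+1)_k}{k!(n_i-1)!}$ and $\tfrac{1}{(\alpha_j-\alpha_i-k)_{n_j}}=\tfrac{(\alpha_i-\alpha_j-n_j+1)_k}{(\alpha_i-\alpha_j+1)_k(\alpha_j-\alpha_i)_{n_j}}$ to read off the terminating ${}_{p+1}F_p$ and its prefactor in \eqref{JPTypeI}. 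Your bookkeeping is in fact slightly cleaner than the paper's displayed residue, which contains a duplicated factor by an evident typographical slip.
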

\begin{proof}
 The integrand only has simple poles inside the contour \( \Sigma\) at \(t=\alpha_i+k\) for \(k=0,\ldots,n_i-1\) and \(i=1,\ldots,p\). Since \(\text{Re}(t)>-1\), the poles of \(\Gamma(t+\beta+\sz{n})\) are outside of \( \Sigma\) (assuming that \( \sz{n}\geq 2 \)). The residue at \(t=\alpha_i+k\) is given by
 \begin{multline*}
  \frac{\Gamma(k+\alpha_i+\beta+\sz{n})}{\Gamma(k+\alpha_i+1)} \frac{(-1)^k}{k! (n_i-k-1)!} \frac{1}{\prod_{j=1,j\neq i}^p(\alpha_j-\alpha_i-k)_{n_j}} x^{k+\alpha_i} \\
   \quad = \frac{(\sz{n}+\alpha_i+\beta)_k \Gamma(\sz{n}+\alpha_i+\beta)}{(\alpha_i+1)_k \Gamma(\alpha_i+1)} \frac{(-n_i+1)_k}{k! (n_i-1)!} \frac{1}{(\alpha_i+1)_k \Gamma(\alpha_i+1)} \frac{(-n_i+1)_k}{k!(n_i-1)!} 
  \\\times  \prod_{j=1,j\neq i}^p\frac{(\alpha_i-\alpha_j-n_j+1)_k}{ (\alpha_i-\alpha_j+1)_k(\alpha_j-\alpha_i)_{n_j}}x^{k+\alpha_i}.
 \end{multline*}
 Consequently,
 \begin{multline*}
  P_{\vec{n}}^{(i)}[k] = \frac{(-1)^{\sz{n}-1} \prod_{j=1}^p (\alpha_j+\beta+\sz{n})_{n_j} \Gamma(\sz{n}+\alpha_i+\beta) }{(n_i-1)! \prod_{j=1,j\neq i}^p(\alpha_j-\alpha_i)_{n_j} \Gamma(\alpha_i+1) \Gamma(\sz{n}+\beta)} \frac{(-n_i+1)_k (\sz{n}+\alpha_i+\beta)_k }{(\alpha_i+1)_k }\\\times 
  \prod_{j=1,j\neq i}^p\frac{(\alpha_i-\alpha_j-n_j+1)_k}{ (\alpha_i-\alpha_j+1)_k}\frac{1}{k!} ,
 \end{multline*}
 which gives \eqref{JPTypeI}.
 
\end{proof}

\subsubsection{Multiple Orthogonal Polynomials of Type II}

\begin{pro} \label{JPII_MT}
 The Mellin transform of \(P_{\vec{n}}(x) (1-x)^\beta\) is given by
\begin{align*}
\int_0^1 P_{\vec{n}}(x) (1-x)^\beta x^{s-1} \operatorname d x = \frac{(-1)^{\sz{n}} \Gamma(\sz{n}+\beta+1)}{\prod_{i=1}^p (\alpha_i+\beta+\sz{n}+1)_{n_i}} \frac{\Gamma(s)}{\Gamma(s+\beta+\sz{n}+1)} \prod_{i=1}^p (\alpha_i+1-s)_{n_i}.
\end{align*}
\end{pro}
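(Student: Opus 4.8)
The plan is to adapt the proof of the Laguerre type~II Mellin transform given above, relying only on the monicity of \(P_{\vec{n}}\) and the type~II orthogonality \eqref{ortogonalidadTipoIIContinua}; in particular, no recourse to the explicit expansion \eqref{JPTypeII} will be needed. First I would expand the monic polynomial as \(P_{\vec{n}}(x)=\sum_{k=0}^{\sz{n}}P_{\vec{n}}[k]\,x^k\) and apply the beta-density Mellin transform \eqref{eq:Mellin_beta} termwise, obtaining
\[
\int_0^1 P_{\vec{n}}(x)(1-x)^\beta x^{s-1}\,\d x=\Gamma(\beta+1)\sum_{k=0}^{\sz{n}}P_{\vec{n}}[k]\,\frac{\Gamma(s+k)}{\Gamma(s+k+\beta+1)}.
\]
The decisive algebraic step is to pull out the \(k\)-independent gamma quotient: writing \(\Gamma(s+k)=\Gamma(s)\,(s)_k\) and \(\Gamma(s+k+\beta+1)=\Gamma(s+\beta+\sz{n}+1)/(s+k+\beta+1)_{\sz{n}-k}\), one is left with
\[
\int_0^1 P_{\vec{n}}(x)(1-x)^\beta x^{s-1}\,\d x=\Gamma(\beta+1)\,\frac{\Gamma(s)}{\Gamma(s+\beta+\sz{n}+1)}\,q_{\sz{n}}(s),\qquad q_{\sz{n}}(s):=\sum_{k=0}^{\sz{n}}P_{\vec{n}}[k]\,(s)_k\,(s+k+\beta+1)_{\sz{n}-k},
\]
where \(q_{\sz{n}}\) is a polynomial in \(s\) of degree at most \(\sz{n}\).

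Next I would use the orthogonality \eqref{ortogonalidadTipoIIContinua} with the weights \(w_i(x)=x^{\alpha_i}(1-x)^\beta\): substituting \(s=\alpha_i+j+1\) for \(j\in\{0,\ldots,n_i-1\}\) and \(i\in\{1,\ldots,p\}\) forces the left-hand side to vanish, while \(\Gamma(s)/\Gamma(s+\beta+\sz{n}+1)\) is finite and nonzero there; since \(\alpha_i-\alpha_{i'}\notin\mathbb{Z}\) for \(i\neq i'\), the \(\sz{n}\) numbers \(\alpha_i+j\) (\(1\le j\le n_i\), \(1\le i\le p\)) are distinct, so \(q_{\sz{n}}\) must be a scalar multiple of \(\prod_{i=1}^p\prod_{j=1}^{n_i}(s-\alpha_i-j)=(-1)^{\sz{n}}\prod_{i=1}^p(\alpha_i+1-s)_{n_i}\); write \(q_{\sz{n}}(s)=c\,(-1)^{\sz{n}}\prod_{i=1}^p(\alpha_i+1-s)_{n_i}\). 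To determine \(c\) I would evaluate \(q_{\sz{n}}\) at \(s=-\beta-\sz{n}\): for \(k<\sz{n}\) the Pochhammer factor \((s+k+\beta+1)_{\sz{n}-k}\) contains the zero factor \(s+\beta+\sz{n}\), so only the top term survives, and monicity \(P_{\vec{n}}[\sz{n}]=1\) gives \(q_{\sz{n}}(-\beta-\sz{n})=(-\beta-\sz{n})_{\sz{n}}=(-1)^{\sz{n}}(\beta+1)_{\sz{n}}\); comparing with \(c\,(-1)^{\sz{n}}\prod_{i=1}^p(\alpha_i+\beta+\sz{n}+1)_{n_i}\) yields \(c=(\beta+1)_{\sz{n}}/\prod_{i=1}^p(\alpha_i+\beta+\sz{n}+1)_{n_i}\). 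Substituting back and using \(\Gamma(\beta+1)(\beta+1)_{\sz{n}}=\Gamma(\beta+\sz{n}+1)\) reproduces exactly the asserted formula.

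I do not expect a genuine obstacle: the whole argument is the Laguerre computation with a beta density in place of a gamma density. The only points requiring care are keeping the gamma-quotient factorization clean so that \(q_{\sz{n}}\) emerges as a polynomial of the right degree, verifying via the AT condition that \(q_{\sz{n}}\) has the full complement of \(\sz{n}\) prescribed zeros, and checking that the single evaluation at \(s=-\beta-\sz{n}\) really isolates the leading coefficient, so that monicity alone — with no appeal to a closed form for \(P_{\vec{n}}(0)\) — fixes the normalization constant.
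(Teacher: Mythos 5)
Your proposal is correct and follows essentially the same route as the paper's proof: termwise beta-density Mellin transform, extraction of the common gamma quotient to expose a degree-\(\sz{n}\) polynomial, identification of its \(\sz{n}\) zeros from the type II orthogonality, and normalization via evaluation at \(s=-\beta-\sz{n}\) using monicity. The only differences are cosmetic (where the factor \(\Gamma(\beta+1)\) is kept and your slightly more explicit remarks on the distinctness of the zeros).
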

\begin{proof}
Performing the Mellin transform on the LHS with \(P_{\vec{n}}(x) = \sum_{k=0}^{\sz{n}} P_{\vec{n}}[k] x^k\), from \eqref{eq:Mellin_beta} we obtain
\begin{align*}
	\sum_{k=0}^{\sz{n}} P_{\vec{n}}[k] \int_0^1 x^{s+k-1} (1-x)^\beta \operatorname d x = \sum_{k=0}^{\sz{n}} P_{\vec{n}}[k] \frac{\Gamma(s+k)\Gamma(\beta+1)}{\Gamma(s+k+\beta+1)}.
\end{align*}
Taking the factor \( \Gamma(s)\Gamma(\beta+1)/\Gamma(s+\sz{n}+\beta+1) \) outside of the sum yields
\begin{align} \label{JPII_MT_COEFF}
	\frac{\Gamma(s)\Gamma(\beta+1)}{\Gamma(s+\sz{n}+\beta+1)} \sum_{k=0}^{\sz{n}} P_{\vec{n}}[k] (s)_k (s+k+\beta+1)_{\sz{n}-k} = \frac{\Gamma(s)}{\Gamma(s+\sz{n}+\beta+1)} p_{\sz{n}}(s),
\end{align}
where \(p_{\sz{n}}(s)\) is a polynomial of degree at most \( \sz{n} \). The orthogonality conditions \eqref{ortogonalidadTipoIIContinua} imply that \(p_{\sz{n}}(s)\) must vanish at the \( \sz{n} \) points \(k+\alpha_i\) for \( k \in \{1,\ldots,n_i\} \) and \( i\in\{1,\ldots,p\} \). Consequently, \( p_{\sz{n}}(s) \) can be expressed as \( p_{\sz{n}}(s) = \mathcal{P}_{\vec{n}} \prod_{i=1}^p (\alpha_i+1-s)_{n_i} \), where \(\mathcal{P}_{\vec{n}}\) is a constant.

To determine \(\mathcal{P}_{\vec{n}}\), we evaluate \eqref{JPII_MT_COEFF} at \( s=-\sz{n}-\beta \). Given that \(P_{\vec{n}}[\sz{n}] = 1\), we obtain
\begin{align*}
	\Gamma(\beta+1) (-\sz{n}-\beta)_{\sz{n}} = \mathcal{P}_{\vec{n}} \prod_{i=1}^p (\alpha_i+\beta+\sz{n}+1)_{n_i},
\end{align*}
which leads to
\(\displaystyle
\mathcal{P}_{\vec{n}} = (-1)^{\sz{n}} \frac{\Gamma(\sz{n}+\beta+1)}{\prod_{i=1}^p (\alpha_i+\beta+\sz{n}+1)_{n_i}} \).
\end{proof}

Applying the inverse Mellin transform yields the following integral representation for the weighted type~II polynomials. Notice the analogous structure of the integral representation for the type I linear form in Theorem \ref{JI_IR}.

\begin{teo} \label{JPII_IMT}
For \(x\in (0,1)\), the function \( P_{\vec{n}}(x) (1-x)^\beta\) is given by
\begin{align}\label{eq:JPII}
 P_{\vec{n}}(x) (1-x)^\beta = \frac{(-1)^{\sz{n}} \Gamma(\sz{n}+\beta+1)}{\prod_{i=1}^p (\alpha_i+\beta+\sz{n}+1)_{n_i}} \int_{\mathcal{C}} \frac{\Gamma(s)}{\Gamma(s+\sz{n}+\beta+1)} \prod_{i=1}^p (\alpha_i+1-s)_{n_i} x^{-s} \frac{ \operatorname d s }{2 \pi \ii },
 \end{align}
where \( \mathcal{C}\) is an anticlockwise contour enclosing \( (-\infty,0]\) once. 
\end{teo}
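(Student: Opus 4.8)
The plan is to recover the inversion formula by applying the discrete-to-continuous inverse Mellin transform \eqref{eq:Mellin_beta} term by term, in exact parallel to the type I argument of Theorem~\ref{JI_IR} but now running it backwards. Concretely, I would start from the Mellin transform computed in Proposition~\ref{JPII_MT},
\begin{align*}
(\mathcal M (P_{\vec n}(1-x)^\beta))(s) = \frac{(-1)^{\sz{n}} \Gamma(\sz{n}+\beta+1)}{\prod_{i=1}^p (\alpha_i+\beta+\sz{n}+1)_{n_i}} \frac{\Gamma(s)}{\Gamma(s+\sz{n}+\beta+1)} \prod_{i=1}^p (\alpha_i+1-s)_{n_i},
\end{align*}
and observe that, since \(P_{\vec n}(x)(1-x)^\beta\in L^1(\mathbb R_{\geqslant 0})\) (the weight is integrable on \((0,1)\) because \(\beta>-1\) and \(P_{\vec n}\) is a polynomial, and the function vanishes off \([0,1]\)), the transform is analytic in a strip \(\{0<\Re(s)<b\}\) for some \(b>0\). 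Hence the standard inverse Mellin transform applies: for any \(c\) in that strip,
\begin{align*}
P_{\vec n}(x)(1-x)^\beta = \int_{c-\ii\infty}^{c+\ii\infty} (\mathcal M(P_{\vec n}(1-x)^\beta))(s)\, x^{-s}\,\frac{\operatorname d s}{2\pi\ii},\qquad x\in(0,1).
\end{align*}

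Next I would deform this vertical line into the stated anticlockwise contour \(\mathcal C\) enclosing \((-\infty,0]\) once. The integrand has poles only at \(s=0,-1,-2,\ldots\) coming from \(\Gamma(s)\); the factor \(1/\Gamma(s+\sz{n}+\beta+1)\) is entire and \(\prod_{i=1}^p(\alpha_i+1-s)_{n_i}\) is polynomial, so no other singularities appear. The key estimate needed to justify closing the contour to the left is the decay of \(\Gamma(s)/\Gamma(s+\sz{n}+\beta+1)\): by Stirling's formula \eqref{eq:Stirling}, on vertical segments far to the left (avoiding the poles, e.g. along the rectangle edges used in the Laguerre corollary) this ratio decays like \(|s|^{-\sz{n}-\beta-1}\), which beats the polynomial growth \(|s|^{\sz{n}}\) of the product, giving an overall decay \(\operatorname O(|s|^{-\beta-1})\) with \(\beta>-1\); combined with the factor \(x^{-s}\) which is bounded for \(x\in(0,1)\) and \(\Re(s)\) bounded below, this makes the contributions from the connecting pieces vanish. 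Therefore the integral over the vertical line equals the integral over \(\mathcal C\), yielding \eqref{eq:JPII}.

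I expect the only real subtlety is this contour-deformation/decay argument — precisely the point that was spelled out carefully in the Laguerre corollary via the rectangles \(\mathcal C_M\) with vertices \(-M\pm\ii/2\) and \(1/2\pm\ii/2\). The same device works verbatim here: one writes \(\int_{\mathcal C} = \lim_{M\to\infty}\int_{\mathcal C_M}\), uses the residue theorem on each rectangle so that \(\int_{\mathcal C_M}\) picks up exactly the residues at \(s=0,-1,\ldots,-M\), and shows the left edge contributes \(o(1)\) because the Gamma-ratio decay now has an \(M\)-dependent gain that overwhelms \(x^{-s}\sim x^{M}\) (here one uses \(0<x<1\)). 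Since this is structurally identical to the case already handled, I would keep the proof very short, either stating that it follows by the inverse Mellin transform together with the decay estimate exactly as in the proof of the corollary to Theorem~\ref{LII_IMT}, or omitting it entirely as an immediate consequence of Proposition~\ref{JPII_MT}. No genuinely new obstacle arises; the extra factor \(\Gamma(\sz{n}+\beta+1)/\prod(\alpha_i+\beta+\sz{n}+1)_{n_i}\) is a harmless constant and the growth bookkeeping is if anything easier than in the Laguerre case because of the faster decay of the Gamma ratio.
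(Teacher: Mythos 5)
Your proposal is correct and follows exactly the route the paper intends: the paper states Theorem \ref{JPII_IMT} without a written proof, presenting it as an immediate application of the inverse Mellin transform to Proposition \ref{JPII_MT}, with the contour-deformation/decay bookkeeping deferred to the subsequent corollary via the rectangles \(\mathcal{C}_M\). Your write-up simply makes explicit the line-to-loop deformation and the Stirling estimate that the paper leaves implicit, and the estimates you give (decay \(\operatorname{O}(|s|^{-\beta-1})\) of the integrand together with \(|x^{-s}|=x^{M}\to 0\) on the far-left edge for \(x\in(0,1)\)) are the right ones.
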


\begin{coro}
Formula \eqref{JPTypeIIWeighted} can be derived by computing the contour integral \eqref{eq:JPII}.
\end{coro}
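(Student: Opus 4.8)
The plan is to evaluate the contour integral in~\eqref{eq:JPII} by residues, exactly as in the corollary following Theorem~\ref{LII_IMT}. Inside $\mathcal{C}$ the integrand
\[
\frac{\Gamma(s)}{\Gamma(s+\sz{n}+\beta+1)}\prod_{i=1}^p(\alpha_i+1-s)_{n_i}\,x^{-s}
\]
has only the simple poles of $\Gamma(s)$ at $s=-k$, $k\in\N_0$: the reciprocal gamma $1/\Gamma(s+\sz{n}+\beta+1)$ is entire, the Pochhammer factors are polynomials, and since $\alpha_i>-1$ they do not vanish at these points; moreover $x^{-s}=\Exp{-s\log x}$ is entire for $x\in(0,1)$, so there is no branch cut. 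The first step is to record the residue at $s=-k$, which using $\operatorname{Res}_{s=-k}\Gamma(s)=(-1)^k/k!$ equals
\[
\frac{(-1)^k}{k!}\,\frac{\prod_{i=1}^p(\alpha_i+1+k)_{n_i}}{\Gamma(\sz{n}+\beta+1-k)}\,x^{k}.
\]

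The second step is to put this summand into hypergeometric form with the elementary identities
\[
\frac{1}{\Gamma(\sz{n}+\beta+1-k)}=\frac{(-1)^k(-\sz{n}-\beta)_k}{\Gamma(\sz{n}+\beta+1)},\qquad
(\alpha_i+1+k)_{n_i}=(\alpha_i+1)_{n_i}\,\frac{(\alpha_i+n_i+1)_k}{(\alpha_i+1)_k}.
\]
Summing over $k$ and multiplying by the prefactor of~\eqref{eq:JPII}, the two factors $\Gamma(\sz{n}+\beta+1)$ cancel, the signs $(-1)^k(-1)^k$ collapse, and one is left with
\[
P_{\vec{n}}(x)(1-x)^\beta=(-1)^{\sz{n}}\frac{\prod_{i=1}^p(\alpha_i+1)_{n_i}}{\prod_{i=1}^p(\alpha_i+\beta+\sz{n}+1)_{n_i}}\,\pFq{p+1}{p}{-\sz{n}-\beta,\vec{\alpha}+\vec{n}+\vec{e}_p}{\vec{\alpha}+\vec{e}_p}{x},
\]
which is precisely~\eqref{JPTypeIIWeighted} after dividing by $(1-x)^\beta$; this step is pure Pochhammer/gamma bookkeeping.

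To make the residue expansion rigorous I would, as in the proof of the corollary to Theorem~\ref{LII_IMT}, take $\mathcal{C}$ as the limit of the anticlockwise rectangles $\mathcal{C}_M$ with vertices $-M\pm\ii/2$ and $1/2\pm\ii/2$, apply the residue theorem on each $\mathcal{C}_M$, and let $M\to\infty$. Stirling's formula~\eqref{eq:Stirling} gives $\Gamma(s)/\Gamma(s+\sz{n}+\beta+1)=\operatorname{O}(|s|^{-\sz{n}-\beta-1})$ while $\prod_{i=1}^p(\alpha_i+1-s)_{n_i}=\operatorname{O}(|s|^{\sz{n}})$, so on the left edge $\operatorname{Re}(s)=-M$ the integrand is $\operatorname{O}(M^{-\beta-1}x^{M})$, which tends to $0$ for $x\in(0,1)$; the same estimate together with the exponential factor $|x^{-s}|=x^{-\operatorname{Re}(s)}$ makes the top and bottom edges converge to $\int_{\mathcal{C}}$ and guarantees absolute convergence of $\sum_{k\ge0}\operatorname{Res}_{s=-k}$ for $|x|<1$. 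The only genuinely delicate point is this tail estimate, but it is milder here than in the Laguerre setting of Theorem~\ref{LII_IMT}, since the extra reciprocal gamma supplies $\sz{n}+\beta+1$ additional powers of decay; everything else is routine.
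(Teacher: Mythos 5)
Your proposal is correct and follows essentially the same route as the paper: compute the residues of the integrand at the simple poles $s=-k$, rewrite them in Pochhammer form via $1/\Gamma(\sz{n}+\beta+1-k)=(-1)^k(-\sz{n}-\beta)_k/\Gamma(\sz{n}+\beta+1)$ and $(\alpha_i+1+k)_{n_i}=(\alpha_i+1)_{n_i}(\alpha_i+n_i+1)_k/(\alpha_i+1)_k$, and justify the residue expansion by passing to the rectangles $\mathcal{C}_M$ and using Stirling's formula to kill the left edge for $x\in(0,1)$. Your tail estimate is in fact slightly more detailed than the paper's, but the argument is the same.
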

\begin{proof}
 The integrand only has simple poles at \(s=-k\) for \( k\in \mathbb{N}_0\). The residue at \(t=-k\) is given by
 \begin{align*}
  \frac{(-1)^k}{k!} \frac{\prod_{i=1}^p (\alpha_i+1+k)_{n_i}}{\Gamma(\sz{n}+\beta+1-k)} x^k = \frac{(-\sz{n}-\beta)_k   }{\Gamma(\sz{n}+\beta+1)} 
 \prod_{i=1}^p 
  \frac{(\alpha_i+1)_{n_i}  (n_i+\alpha_i+1)_k}{(\alpha_i+1)_k}   \frac{x^k}{k!}.
 \end{align*}
 To establish \eqref{JPTypeIIWeighted}, it then remains to note that we may replace the contour integral \(\int_{\mathcal{C}}\) by \(\lim_{M\to\infty} \int_{\mathcal{C}_M}\) in terms of anticlockwise oriented rectangles \(\mathcal{C}_M\) that connect the vertices \(-M\pm \operatorname i/2\) and \(1/2\pm \operatorname i/2\). We may do this because Stirling's formula \eqref{eq:Stirling} implies that  such that  \[\left|\frac{\Gamma(-M+ \operatorname i s) }{\Gamma(-M+ \operatorname i s+\sz{n}+\beta+1)}\right| \sim  \frac{1}{M^{\sz{n}+\beta+1}}\] as \( M\to\infty \) and thus
 \begin{align*}
  \lim_{M\to\infty} \int_{-M-\frac{\operatorname i}{2}}^{-M+\frac{\operatorname i}{2}} \frac{\Gamma(s)}{\Gamma(s+\sz{n}+\beta+1)} \prod_{i=1}^p (\alpha_i+1-s)_{n_i} x^{-s} \frac{ \operatorname d s }{2 \pi \ii } = 0,
 \end{align*}
 whenever \(x\in (0,1)\).
\end{proof}

\subsection{Hahn Multiple Orthogonal Polynomials}

Here, we rely on \S \ref{S:Hahn_0} and \S \ref{S:Hahn}.

\subsubsection{Multiple Orthogonal Polynomials of Type I}
Recall that the type I multiple Hahn polynomials are vectors of polynomials \((Q_{\vec{n}}^{(1)}, \ldots, Q_{\vec{n}}^{(p)})\), where \(\deg Q_{\vec{n}}^{(i)} \leq n_i - 1\). The linear form
\begin{align*}
Q_{\vec{n}}^{(I)}(x) = \sum_{i=1}^p Q_{\vec{n}}^{(i)}(x) \frac{\Gamma(x+\alpha_i+1)}{\Gamma(\alpha_i+1)} ,
\end{align*}
 satisfies the conditions:
\begin{align*}
\int_0^\infty Q_{\vec{n}}^{(I)}(x) \frac{\Gamma(\beta+N-x+1)}{\Gamma(x-k+1)\Gamma(N-x+1)}\d\nu_N(x) =
\begin{cases}
	0, & \text{if } k \in \{0, \ldots, |\vec{n}| - 2\}, \\
	\Gamma(\beta+1), & \text{if } k = |\vec{n}| - 1.
\end{cases}
\end{align*}
Recall that \( \nu_N=\sum_{k=0}^N \delta_k\) has mass points on a lattice \( \mathcal{L}_N = \{0,\ldots,N\}\).
Observe that we consider orthogonality with respect to \( (x-k+1)_k = \Gamma(x+1)/\Gamma(x-k+1)\) instead of the standard \(x^k\). This is more suitable for our approach.

To proceed further, we need the following analogue of Theorem \ref{LI_IR}.

\begin{pro} \label{HI_PC}
	Let us consider:
	\begin{enumerate}[\rm i)]
		\item a 
		linear form
		\( F_{\vec{n}}(x) = \sum_{i=1}^p A_{\vec{n},i}(x) 
\Gamma(x+\alpha_i+1)
		\) with \( \deg A_{\vec{n},i} \leqslant n_i-1\),
		\item 
		an anticlockwise contour \( \Sigma\subset\C\) enclosing once \(\mathcal I=\cup_{i=1}^p[\alpha_i,n_i+\alpha_i-1]\),
		\item and a function \( \varphi \) analytic inside and on \( \Sigma\) that doesn't vanish on \( \mathcal I\). 
	\end{enumerate}Then there exists a unique polynomial \( p_{\sz{n}-1}(t)\) of degree at most \( \sz{n}-1\) such that
 \begin{align*}
 F_{\vec{n}}(x) = \int_{\Sigma} \frac{p_{\sz{n}-1}(t)}{\prod_{i=1}^p (\alpha_i-t)_{n_i}} \varphi(t) \Gamma(x+t+1) \frac{ \operatorname d t }{2\pi \ii}.
 \end{align*}
\end{pro}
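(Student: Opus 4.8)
The plan is to mimic directly the proof of Proposition \ref{LI_PC}, since the only change is that the monomial kernel $x^t$ is replaced by the gamma kernel $\Gamma(x+t+1)$, and that the linear form now uses $\Gamma(x+\alpha_i+1)$ in place of $x^{\alpha_i}$. First I would assume $p_{\sz{n}-1}(t)$ is an arbitrary polynomial of degree at most $\sz{n}-1$ and apply the residue theorem to the contour integral. The integrand $\dfrac{p_{\sz{n}-1}(t)}{\prod_{i=1}^p(\alpha_i-t)_{n_i}}\varphi(t)\Gamma(x+t+1)$ has, inside $\Sigma$, only the simple poles coming from the factors $(\alpha_i-t)_{n_i}$, i.e. at $t=\alpha_i+k$ for $k\in\{0,\ldots,n_i-1\}$ and $i\in\{1,\ldots,p\}$; the factor $\Gamma(x+t+1)$ is analytic there because $\varphi$ is analytic and, for the relevant range of $x$, $x+t+1$ stays away from the non-positive integers on $\mathcal I$ (and in any case the poles of $\Gamma(x+t+1)$ lie outside $\Sigma$ by the hypotheses on the contour). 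Collecting residues gives
\begin{align*}
\int_{\Sigma}\frac{p_{\sz{n}-1}(t)}{\prod_{i=1}^p(\alpha_i-t)_{n_i}}\varphi(t)\Gamma(x+t+1)\frac{\operatorname{d}t}{2\pi\ii}
=\sum_{i=1}^p\sum_{k=0}^{n_i-1}\frac{(-1)^k p_{\sz{n}-1}(k+\alpha_i)\varphi(k+\alpha_i)}{(n_i-1-k)!\,k!\,\prod_{j=1,j\neq i}^p(\alpha_j-\alpha_i-k)_{n_j}}\Gamma(x+k+\alpha_i+1),
\end{align*}
exactly as in Proposition \ref{LI_PC} but with $x^{k+\alpha_i}$ replaced by $\Gamma(x+k+\alpha_i+1)$.

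Next I would compare this with the prescribed form $F_{\vec{n}}(x)=\sum_{i=1}^p A_{\vec{n},i}(x)\Gamma(x+\alpha_i+1)$. Here the key algebraic observation is that $\Gamma(x+k+\alpha_i+1)=(x+\alpha_i+1)_k\,\Gamma(x+\alpha_i+1)$, so writing $A_{\vec{n},i}(x)$ in the shifted factorial basis $\{(x+\alpha_i+1)_k\}_{k=0}^{n_i-1}$ — which is a legitimate basis for polynomials of degree at most $n_i-1$ — as $A_{\vec{n},i}(x)=\sum_{k=0}^{n_i-1}A_{\vec{n},i}[k]\,(x+\alpha_i+1)_k$, the requirement that the contour integral equal $F_{\vec{n}}(x)$ becomes, after matching the coefficient of $\Gamma(x+\alpha_i+1)(x+\alpha_i+1)_k$ for each $i$ and $k$,
\begin{align*}
A_{\vec{n},i}[k]=\frac{(-1)^k p_{\sz{n}-1}(k+\alpha_i)\varphi(k+\alpha_i)}{(n_i-1-k)!\,k!\,\prod_{j=1,j\neq i}^p(\alpha_j-\alpha_i-k)_{n_j}}.
\end{align*}
Since $\varphi$ does not vanish on $\mathcal I$ and the $\alpha_i$ differ by non-integers (so the denominators $\prod_{j\neq i}(\alpha_j-\alpha_i-k)_{n_j}$ are nonzero), these relations prescribe the values of $p_{\sz{n}-1}$ at the $\sz{n}$ distinct points $\{k+\alpha_i: 0\le k\le n_i-1,\ 1\le i\le p\}$. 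A polynomial of degree at most $\sz{n}-1$ is uniquely determined by its values at $\sz{n}$ distinct points (Lagrange interpolation), so $p_{\sz{n}-1}(t)$ exists and is unique, which is exactly the assertion.

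The only genuinely new point compared to Proposition \ref{LI_PC} — and the step I would treat most carefully — is checking that the residue computation is clean, i.e. that $\Gamma(x+t+1)$ contributes no extra poles inside $\Sigma$ and that the $\sz{n}$ interpolation nodes $k+\alpha_i$ are indeed pairwise distinct. The distinctness follows from the AT-system hypothesis $\alpha_i-\alpha_j\notin\mathbb Z$ for $i\neq j$ together with $0\le k\le n_i-1$, which is already in force for the Hahn family in \S\ref{S:Hahn_0}. For the pole structure, one notes that by hypothesis $\Sigma$ encloses only $\mathcal I$, hence $\varphi$ and $\Gamma(x+t+1)$ are analytic on and inside $\Sigma$ apart from the explicit factors $(\alpha_i-t)_{n_i}$ in the denominator; so the residue theorem applies verbatim. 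Everything else — the binomial/Pochhammer bookkeeping in converting residues into the coefficients $A_{\vec{n},i}[k]$ — is routine and identical in spirit to the Laguerre case. Hence the proof is essentially a transcription of the argument for Proposition \ref{LI_PC} with $x^t\rightsquigarrow\Gamma(x+t+1)$ and the monomial basis replaced by the shifted-factorial basis.
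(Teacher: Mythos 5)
Your proposal is correct and follows essentially the same route as the paper's proof: apply the residue theorem to pick up the simple poles at $t=\alpha_i+k$, expand $A_{\vec{n},i}$ in the shifted-factorial basis $\{(x+\alpha_i+1)_k\}$ so that $\Gamma(x+k+\alpha_i+1)=(x+\alpha_i+1)_k\,\Gamma(x+\alpha_i+1)$, and read off the $\sz{n}$ interpolation conditions that determine $p_{\sz{n}-1}$ uniquely. The extra care you take about the distinctness of the nodes and the pole structure of $\Gamma(x+t+1)$ is left implicit in the paper but is consistent with its argument.
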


\begin{proof}
Suppose that \( p_{\sz{n}-1}(t) \) is a polynomial of degree at most \( \sz{n}-1 \). By applying the residue theorem, we derive that
\begin{align*}
\int_{\Sigma} \frac{p_{\sz{n}-1}(t)}{\prod_{i=1}^p (\alpha_i-t)_{n_i}} \varphi(t) \Gamma(x+t+1) \frac{ \operatorname d t }{2\pi \ii} = \sum_{i=1}^p \sum_{k=0}^{n_i-1} \frac{(-1)^k p_{\sz{n}-1}(k+\alpha_i) \varphi(k+\alpha_i) }{(n_i-1-k)! k! \prod_{j=1,j\neq i}^p (\alpha_j-\alpha_i-k)_{n_j} }\Gamma(x+k+\alpha_i+1).
\end{align*}
Thus, expanding \( A_{\vec{n},i}(x) = \sum_{k=0}^{n_i-1} A_{\vec{n},i}[k] (x+\alpha_i+1)_k \) in the basis \( \{(x+\alpha_i+1)_k\}_{k=0}^{n_i-1} \), we find that 
\[ A_{\vec{n},i}[k] = \frac{(-1)^k p_{\sz{n}-1}(k+\alpha_i) \varphi(k+\alpha_i) }{(n_i-1-k)! k! \prod_{j=1,j\neq i}^p (\alpha_j-\alpha_i-k)_{n_j} }.
\]
 These \( \sz{n} \) interpolation conditions uniquely determine the degree \( \sz{n}-1 \) polynomial~\( p_{\sz{n}-1}(t) \).
\end{proof}

The orthogonality conditions, together with the normalization condition, allow us to explicitly determine the polynomials we have to use in order to recover the type I linear forms \[ Q_{\vec{n}}^{(I)}(x) = \sum_{i=1}^p Q_{\vec{n}}^{(i)}(x) \frac{\Gamma(x+\alpha_i+1)}{\Gamma(\alpha_i+1)}.\]

\begin{teo} \label{HI_IR}
Let \( \Sigma \subset \mathbb{C} \) be as in Proposition \ref{HI_PC} and take \( \operatorname{Re}(t) > -1 \). Then the linear form
\begin{align*}
 Q_{\vec{n}}^{(I)}(x) \coloneq  \sum_{i=1}^p Q_{\vec{n}}^{(i)}(x) \frac{\Gamma(x+\alpha_i+1)}{\Gamma(\alpha_i+1) } ,
 \end{align*}
 is given by
\begin{align}\label{eq:HI}
 Q_{\vec{n}}^{(I)}(x) = (-1)^{\sz{n}-1} (N-\sz{n}+1)! \frac{\prod_{i=1}^p (\alpha_i+\beta+\sz{n})_{n_i}}{(\beta+1)_{\sz{n}-1}} \int_\Sigma \frac{\Gamma(t+\beta+\sz{n})}{\Gamma(t+1) \Gamma(t+\beta+N+2)} \frac{\Gamma(x+t+1)}{\prod_{i=1}^p (\alpha_i-t)_{n_i}} \frac{ \operatorname d t }{2 \pi \ii }. 
\end{align} 
\end{teo}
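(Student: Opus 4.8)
The plan is to mirror the strategy already used for the Laguerre-I (Theorem~\ref{LI_IR}) and Jacobi--Pi\~neiro-I (Theorem~\ref{JI_IR}) cases, now with the discrete Mellin transform $\mathcal M_N$ of Theorem~\ref{DMT_INV} playing the role that the ordinary Mellin transform played there. First I would invoke Proposition~\ref{HI_PC} to write
\begin{align*}
Q_{\vec n}^{(I)}(x)=\int_\Sigma \frac{p_{\sz{n}-1}(t)}{\prod_{i=1}^p(\alpha_i-t)_{n_i}}\,\varphi(t)\,\Gamma(x+t+1)\,\frac{\operatorname d t}{2\pi\ii},
\end{align*}
for some polynomial $p_{\sz{n}-1}$ of degree $\le\sz{n}-1$ and some analytic non-vanishing $\varphi$ still to be chosen. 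Then I would apply the discrete transform: multiply by $\Gamma(\beta+N-x+1)/(\Gamma(x-k+1)\Gamma(N-x+1))$, sum against $\nu_N$, and interchange the (finite) sum over the lattice with the contour integral. The key computation here is the evaluation of
\begin{align*}
\sum_{x=0}^{N}\frac{\Gamma(x+t+1)}{\Gamma(x-k+1)}\frac{\Gamma(\beta+N-x+1)}{\Gamma(N-x+1)}
\end{align*}
in closed form; writing $\Gamma(x+t+1)/\Gamma(x-k+1)=(x-k+1)_{k}\,\Gamma(x+t+1)/\Gamma(x+1)\cdot(\text{shift})$ and using the Chu--Vandermonde identity \eqref{Chu-Van} (exactly as in the proof of Theorem~\ref{HahnTypeITheorem}, where the brace-term $(\alpha_i+\beta+2+j+l)_N$ appeared) should produce a ratio of Gamma functions times $(t+\text{something})_{k}$ — i.e.\ a polynomial in $t$ of degree $k$ sitting in the integrand, provided $\varphi$ is chosen to cancel the unwanted Gamma factors.

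The natural choice, suggested by the Laguerre/Jacobi--Pi\~neiro pattern and by the shape of the claimed answer \eqref{eq:HI}, is
\begin{align*}
\varphi(t)=\frac{\Gamma(t+\beta+\sz{n})}{\Gamma(t+1)\,\Gamma(t+\beta+N+2)},
\end{align*}
up to a constant, so that after the summation the integrand against $(x-k+1)_k$ becomes $p_{\sz{n}-1}(t)/\prod_i(\alpha_i-t)_{n_i}$ times a polynomial of degree exactly $k$ in $t$. The orthogonality conditions for $k\in\{0,\ldots,\sz{n}-2\}$ then say precisely that $\int_\Sigma p_{\sz{n}-1}(t)q_{\sz{n}-2}(t)/\prod_i(\alpha_i-t)_{n_i}\,\frac{\operatorname d t}{2\pi\ii}=0$ for every $q_{\sz{n}-2}$ of degree $\le\sz{n}-2$, and Lemma~\ref{IR_lem} forces $p_{\sz{n}-1}(t)\equiv p_{\sz{n}-1}[0]$ to be constant. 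Finally I would fix the constant from the normalization ($k=\sz{n}-1$, value $\Gamma(\beta+1)$): the integrand then decays like $\operatorname O(|t|^{-2})$, so the integral is evaluated by the residue at the one pole of $\Gamma(t+\beta+\sz{n})$ lying inside $\Sigma$ (namely $t=-\beta-\sz{n}$, which is $<-1$ once $\sz{n}\ge 2$, hence not among the $t=\alpha_i+m$), exactly as the residue-at-$t=-\sz{n}-\beta$ step in Theorem~\ref{JI_IR}. Matching this residue to $\Gamma(\beta+1)$ yields $p_{\sz{n}-1}[0]=(-1)^{\sz{n}-1}(N-\sz{n}+1)!\,\prod_i(\alpha_i+\beta+\sz{n})_{n_i}/(\beta+1)_{\sz{n}-1}$, giving \eqref{eq:HI}.

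The main obstacle I anticipate is the bookkeeping in the Chu--Vandermonde evaluation of the lattice sum and in verifying that the chosen $\varphi$ indeed turns the whole $x$-sum into a polynomial in $t$ of the correct degree $k$ (so that the degree count feeding Lemma~\ref{IR_lem} is exactly right, neither too large nor accidentally degenerate); the analogous step in Theorem~\ref{JI_IR} required the nontrivial observation that the integrand involves $(t+1)_s(s+t+\beta+2)_{\sz{n}-2-s}$, and here the discrete nature (finite sum, truncation at $N$, the extra $\Gamma(t+\beta+N+2)$ in the denominator) makes the corresponding identity more delicate — in particular one must check the $N$-dependent factors collapse to the clean $(N-\sz{n}+1)!$ and $\Gamma(t+\beta+N+2)$ appearing in \eqref{eq:HI}. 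A secondary technical point is justifying the termwise interchange of the finite lattice sum with the contour integral and confirming that, for $\operatorname{Re}(t)>-1$ and $\sz{n}\ge 2$, the pole at $t=-\beta-\sz{n}$ is the only relevant one outside $\mathcal I$ so that the residue-at-a-point evaluation of the normalization integral is legitimate.
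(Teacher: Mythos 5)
Your proposal follows the paper's proof essentially step for step: Proposition~\ref{HI_PC} for the contour ansatz, the discrete Mellin transform of the linear form with the lattice sum collapsed by Chu--Vandermonde, the choice $\varphi(t)=\Gamma(t+\beta+\sz{n})/(\Gamma(t+1)\Gamma(t+\beta+N+2))$ reducing the orthogonality conditions to Lemma~\ref{IR_lem}, and the normalization fixed by the residue at $t=-\sz{n}-\beta$. The only slip is describing that pole as lying \emph{inside} $\Sigma$ --- it lies outside (since $-\sz{n}-\beta<-1$ while $\operatorname{Re}(t)>-1$ on $\Sigma$), and the evaluation works because the $\operatorname{O}(|t|^{-2})$ decay lets one trade the residues inside for the single residue outside, as your closing remark in fact acknowledges.
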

\begin{proof}
Using Proposition \ref{HI_PC}, we can express 
\begin{align*}
Q_{\vec{n}}^{(I)}(x) = \int_{\Sigma}\frac{p_{\sz{n}-1}(t)}{\prod_{i=1}^p (\alpha_i-t)_{n_i}} \varphi(t) \Gamma(x+t+1) \frac{ \operatorname d t }{2 \pi \ii },
\end{align*}
where \( p_{\sz{n}-1}(t) \) is a polynomial of degree at most \( \sz{n}-1 \) and \( \varphi(t) \) (and the contour \(\Sigma\)) will be chosen appropriately later. The orthogonality conditions can be written as
\begin{align*}
\int_0^\infty Q_{\vec{n}}^{(I)}(x) \frac{\Gamma(\beta+N-x+1)}{\Gamma(x-s+1)\Gamma(N-x+1)} \operatorname d \nu_N(x) = 0, \quad s \in \{0, \ldots, \sz{n}-2\}.
\end{align*}
Bringing the integral inside the contour integral, we get
\begin{align*}
\int_{\Sigma}\frac{p_{\sz{n}-1}(t)}{\prod_{i=1}^p (\alpha_i-t)_{n_i}} \varphi(t) \left( \sum_{k=s}^N \frac{\Gamma(k+t+1)}{(k-s)!} \frac{\Gamma(N-k+\beta+1)}{(N-k)!} \right) \frac{ \operatorname d t }{2 \pi \ii } = 0.
\end{align*}
The inner sum can be simplified using the Chu--Vandermonde identity \eqref{Chu-Van}. Specifically,
\begin{align*}
\begin{aligned}
	\sum_{k=s}^N \frac{\Gamma(k+t+1)}{(k-s)!} \frac{\Gamma(N-k+\beta+1)}{(N-k)!} 
	&= \sum_{k=0}^{N-s} \frac{\Gamma(k+s+t+1)}{k!} \frac{\Gamma(N-s-k+\beta+1)}{(N-s-k)!} \\
	&= \Gamma(s+t+1) \Gamma(\beta+1) \frac{(s+t+\beta+2)_{N-s}}{(N-s)!}.
\end{aligned}
\end{align*}
Substituting this back into the contour integral, we have
\begin{align*}
\frac{\Gamma(\beta+1)}{(N-s)!} \int_{\Sigma}\frac{p_{\sz{n}-1}(t)}{\prod_{i=1}^p (\alpha_i-t)_{n_i}} \varphi(t) \Gamma(t+\beta+N+2) \frac{\Gamma(s+t+1)}{\Gamma(s+t+\beta+2)} \frac{ \operatorname d t }{2 \pi \ii } = 0.
\end{align*}
It is convenient to choose \( \varphi(t) = \Gamma(t+\beta+\sz{n}) / (\Gamma(t+\beta+N+2)\Gamma(t+1)) \), transforming the above to
\begin{align*}
\frac{\Gamma(\beta+1)}{(N-s)!} \int_{\Sigma}\frac{p_{\sz{n}-1}(t)}{\prod_{i=1}^p (\alpha_i-t)_{n_i}} (t+1)_s (s+t+\beta+2)_{\sz{n}-2-s} \frac{ \operatorname d t }{2 \pi \ii } = 0,
\end{align*}
which matches the contour integral derived in the proof of Theorem \ref{JI_IR}. We can then conclude from Lemma \ref{IR_lem} that \( p_{\sz{n}-1}(t) \) must be a constant, say \( p_{\sz{n}-1}(t) = p_{\sz{n}-1}[0] \). The precise value of this constant is determined by the normalization condition:
\begin{align*}
\frac{\Gamma(\beta+1)}{(N-\sz{n}+1)!} \int_{\Sigma}\frac{p_{\sz{n}-1}[0]}{\prod_{i=1}^p (\alpha_i-t)_{n_i}} \frac{(t+1)_{\sz{n}-1}}{t+\sz{n}+\beta} \frac{ \operatorname d t }{2 \pi \ii } =  \Gamma(\beta+1).
\end{align*}
Since the integrand decays as
\(\operatorname{O}(|t|^{-2})\) as \( |t|\to\infty \), we can evaluate the integral using the residue at \( t=-\sz{n}-\beta \) (note that $-\sz{n}-\beta<-1$ for \( \sz{n}\geq 2 \)):
\begin{align*}
\int_\Sigma \frac{p_{\sz{n}-1}[0]}{\prod_{i=1}^p (\alpha_i-t)_{n_i}} \frac{(t+1)_{\sz{n}-1}}{t+\sz{n}+\beta} \frac{ \operatorname d t }{2 \pi \ii } = \frac{p_{\sz{n}-1}[0]}{\prod_{i=1}^p (\alpha_i+\beta+\sz{n})_{n_i}} (-\sz{n}-\beta+1)_{\sz{n}-1}.
\end{align*}
We can therefore conclude that 
\(
\displaystyle
p_{\sz{n}-1}[0] = (-1)^{\sz{n}-1} (N-\sz{n}+1)! \frac{\prod_{i=1}^p (\alpha_i+\beta+\sz{n})_{n_i}}{(\beta+1)_{\sz{n}-1}} \).
\end{proof}

As a byproduct we find the following constructive proof  of the previously given hypergeometric representation of type I  Hahn multiple orthogonal polynomials.
\begin{coro}
The explicit hypergeometric formula \eqref{HahnTypeI} 
follows from the evaluation of the contour integral \eqref{eq:HI}.
\end{coro}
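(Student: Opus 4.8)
The plan is to mimic exactly the two corollaries already carried out for the Laguerre of the first kind and Jacobi--Piñeiro type I linear forms. Starting from the contour integral representation \eqref{eq:HI}, the first step is to identify the poles of the integrand that lie inside $\Sigma$. By hypothesis $\Sigma$ encloses once $\mathcal I=\cup_{i=1}^p[\alpha_i,n_i+\alpha_i-1]$, and since $\operatorname{Re}(t)>-1$ the poles of $\Gamma(t+\beta+\sz{n})$ (located at $t\leq -\beta-\sz{n}<-1$ for $\sz{n}\geq 2$) and those of $\Gamma(x+t+1)$ lie outside $\Sigma$; the factor $1/\Gamma(t+\beta+N+2)$ is entire. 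Hence the only poles inside $\Sigma$ are the simple poles of $1/\prod_{i=1}^p(\alpha_i-t)_{n_i}$ at $t=\alpha_i+k$ for $k\in\{0,\ldots,n_i-1\}$ and $i\in\{1,\ldots,p\}$, exactly as in the Laguerre and Jacobi--Piñeiro cases.

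Next I would apply the residue theorem. The residue at $t=\alpha_i+k$ contributes, up to the global prefactor $(-1)^{\sz{n}-1}(N-\sz{n}+1)!\,\prod_{i=1}^p(\alpha_i+\beta+\sz{n})_{n_i}/(\beta+1)_{\sz{n}-1}$, a term
\[
\frac{\Gamma(k+\alpha_i+\beta+\sz{n})}{\Gamma(k+\alpha_i+1)\,\Gamma(k+\alpha_i+\beta+N+2)}\,
\frac{(-1)^k}{k!\,(n_i-1-k)!}\,
\frac{\Gamma(x+k+\alpha_i+1)}{\prod_{j=1,j\neq i}^p(\alpha_j-\alpha_i-k)_{n_j}},
\]
and comparing with $Q_{\vec n}^{(I)}(x)=\sum_{i=1}^p Q_{\vec n}^{(i)}(x)\Gamma(x+\alpha_i+1)/\Gamma(\alpha_i+1)$ expanded in the Pochhammer basis $\{(x+\alpha_i+1)_k\}$ (so that $\Gamma(x+k+\alpha_i+1)=\Gamma(x+\alpha_i+1)(x+\alpha_i+1)_k$), one reads off $Q_{\vec n}^{(i)}[k]$. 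The routine but bookkeeping-heavy part is to rewrite the gamma quotients using
\[
\frac{\Gamma(k+\alpha_i+\beta+\sz{n})}{\Gamma(\alpha_i+\beta+\sz{n})}=(\alpha_i+\beta+\sz{n})_k,\qquad
\frac{\Gamma(\alpha_i+\beta+N+2)}{\Gamma(k+\alpha_i+\beta+N+2)}=\frac{1}{(\alpha_i+\beta+N+2)_k},
\]
\[
\frac{1}{\Gamma(k+\alpha_i+1)}=\frac{1}{(\alpha_i+1)_k\Gamma(\alpha_i+1)},\qquad
\frac{(-1)^k}{(n_i-1-k)!}=\frac{(-n_i+1)_k}{(n_i-1)!},
\]
and $\prod_{j\neq i}1/(\alpha_j-\alpha_i-k)_{n_j}=\prod_{j\neq i}(\alpha_i-\alpha_j-n_j+1)_k/\big((\alpha_i-\alpha_j+1)_k(\alpha_j-\alpha_i)_{n_j}\big)$, exactly the manipulations used in the Laguerre corollary. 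Collecting these gives $Q_{\vec n}^{(i)}[k]=C_{\vec n}^{(i),l}$ with $l=k$ as in \eqref{coefHahnTypeI}, whence $Q_{\vec n}^{(i)}(x)=\sum_{l=0}^{n_i-1}C_{\vec n}^{(i),l}(x+\alpha_i+1)_l$, which is precisely \eqref{HahnTypeI}.

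One subtlety to address, rather than a genuine obstacle, is the factor $1/\Gamma(t+\beta+N+2)$: its zeros at $t=-\beta-N-2,-\beta-N-3,\ldots$ do not interfere because they lie outside $\Sigma$, and $\Gamma(x+\alpha_i+k+1)$ in the residue stays finite for $\operatorname{Re}(x)$ in the relevant range (indeed for $x\in\mathcal L_N$). Since the residue theorem here is an exact finite sum — there is no contour at infinity to discard, unlike the type II corollaries — the argument is genuinely just a computation, and the main (minor) point of care is simply matching the Pochhammer-basis normalization $\Gamma(x+\alpha_i+1)/\Gamma(\alpha_i+1)$ against the $\Gamma(x+k+\alpha_i+1)$ produced by Proposition~\ref{HI_PC}.
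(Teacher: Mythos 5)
Your proposal is correct and follows essentially the same route as the paper: identify the simple poles of \(1/\prod_{i=1}^p(\alpha_i-t)_{n_i}\) at \(t=\alpha_i+k\) inside \(\Sigma\) (noting, as the paper does via \(\Gamma(t+\beta+\sz{n})/\Gamma(t+\beta+N+2)=1/(t+\beta+\sz{n})_{N+2-\sz{n}}\), that no other poles intervene for \(\operatorname{Re}(t)>-1\)), compute the residues, convert the gamma quotients into the Pochhammer factors, and read off the coefficients \(C_{\vec n}^{(i),l}\) in the basis \((x+\alpha_i+1)_l\). The bookkeeping identities you list are exactly those used in the paper's proof, so no gap remains.
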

\begin{proof}
 The integrand only has simple poles inside the contour \( \Sigma\) at \(t=\alpha_i+k\) for \(k\in\{0,\ldots,n_i-1\}\) and \(i\in\{1,\ldots,p\}\). Since \(\text{Re}(t)>-1\), the poles of \[\frac{\Gamma(t+\beta+\sz{n})}{\Gamma(t+\beta+N+2)} = \frac{1}{(t+\beta+\sz{n})_{N+2-\sz{n}}}\]
 are outside of \( \Sigma\) (assuming that \( \sz{n}\geq 2 \)). The residue at \(t=\alpha_i+k\) is given by
 \begin{multline*}
   \frac{\Gamma(k+\alpha_i+\beta+\sz{n})}{\Gamma(k+\alpha_i+1) \Gamma(k+\alpha_i+\beta+N+2)} \frac{(-1)^k}{k! (n_i-k-1)!} \frac{1}{\prod_{j=1,j\neq i}^p(\alpha_j-\alpha_i-k)_{n_j}} \Gamma(x+k+\alpha_i+1) \\
   = \frac{(\sz{n}+\alpha_i+\beta)_k \Gamma(\sz{n}+\alpha_i+\beta)}{(\alpha_i+1)_k \Gamma(\alpha_i+1) (\alpha_i+\beta+N+2)_k \Gamma(\alpha_i+\beta+N+2)} \frac{(-n_i+1)_k}{k! (n_i-1)!} 
   	\\\times  \prod_{j=1,j\neq i}^p\frac{(\alpha_i-\alpha_j-n_j+1)_k}{ (\alpha_i-\alpha_j+1)_k(\alpha_j-\alpha_i)_{n_j}}\Gamma(x+k+\alpha_i+1).
 \end{multline*}
 Consequently,
 \begin{multline*}
  Q_{\vec{n}}^{(i)}(x)  = \frac{(-1)^{\sz{n}-1} (N-\sz{n}+1)! \Gamma(\sz{n}+\alpha_i+\beta) \prod_{j=1}^p (\alpha_j+\beta+\sz{n})_{n_j} }{(\beta+1)_{\sz{n}-1} (n_i-1)!  \Gamma(\alpha_i+\beta+N+2) \prod_{j=1,j\neq i}^p(\alpha_j-\alpha_i)_{n_j}  } \\
    \times \sum_{k=0}^{n_i-1}  \frac{(-n_i+1)_k (\sz{n}+\alpha_i+\beta)_k }{(\alpha_i+1)_k (\alpha_i+\beta+N+2)_k}  	\prod_{j=1,j\neq i}^p\frac{(\alpha_i-\alpha_j-n_j+1)_k}{ (\alpha_i-\alpha_j+1)_k} \frac{(x+\alpha_i+1)_k}{k!} ,
 \end{multline*}
 which leads to the result in Theorem \ref{HahnTypeITheorem}.

\end{proof}

\subsubsection{Multiple Orthogonal Polynomials of Type II}

Recall that the type II multiple Hahn polynomials are monic polynomials \(Q_{\vec{n}}(x)\) that satisfy the orthogonality conditions
\begin{align*}
	\int_0^\infty Q_{\vec{n}}(x) \frac{\Gamma(x+k+\alpha_i+1)}{\Gamma(x+1)} \frac{\Gamma(\beta+N-x+1)}{\Gamma(N-x+1)} \operatorname{d} \nu_N(x) = 0,
\end{align*}
for \(k \in \{0, \ldots, n_i - 1\}\) and \(i \in \{1, \ldots, p\}\). Note that we consider orthogonality with respect to \((x + \alpha_i + 1)_k\) instead of the standard \(x^k\), as this will be more compatible with our approach.

The analogue of Proposition \ref{JPII_MT} is stated below. Indeed, it follows from Stirling's formula \eqref{eq:Stirling} that
\[\begin{aligned}
	\frac{\Gamma(N-Nx+\beta+1)}{\Gamma(N-Nx+1)} &\sim (N(1-x))^{\beta}, &
	\frac{\Gamma(Nx+s)}{\Gamma(Nx+1)} &\sim (Nx)^{s-1},& N &\to \infty.
\end{aligned}\]

\begin{pro} \label{HII_MT}
 Let us use the notation
\begin{align*}
\tilde{Q}_{\vec{n}}(x) = Q_{\vec{n}}(x) \frac{\Gamma(N-x+\beta+1)}{\Gamma(N-x+1)}.
\end{align*}
 Then
\begin{align*}
\begin{aligned}
 \int_0^\infty \tilde{Q}_{\vec{n}}(x) \frac{\Gamma(x+s)}{\Gamma(x+1)} \operatorname d \nu_N(x)
 = \frac{(-1)^{\sz{n}} \Gamma(\sz{n}+\beta+1)}{(N-\sz{n})! \prod_{i=1}^p (\alpha_i+\beta+\sz{n}+1)_{n_i}} \frac{\Gamma(s) \Gamma(s+N+\beta+1)}{\Gamma(s+\sz{n}+\beta+1)} \prod_{i=1}^p (\alpha_i+1-s)_{\sz{n}-1}.
\end{aligned}
\end{align*}
\end{pro}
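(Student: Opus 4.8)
The plan is to follow the same route as in the proof of Proposition~\ref{JPII_MT}: evaluate the discrete integral in closed form, observe that the result is a fixed ratio of gamma functions times a polynomial of degree at most $\sz{n}$ in $s$, identify that polynomial from the orthogonality relations, and fix the remaining scalar by one convenient evaluation.

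First I would write $\int_0^\infty\tilde Q_{\vec n}(x)\frac{\Gamma(x+s)}{\Gamma(x+1)}\operatorname d \nu_N(x)=\sum_{x=0}^N Q_{\vec n}(x)\frac{\Gamma(N-x+\beta+1)}{\Gamma(N-x+1)}\frac{\Gamma(x+s)}{\Gamma(x+1)}$ and use the elementary rewritings $\frac{\Gamma(x+s)}{\Gamma(x+1)}=\Gamma(s)\frac{(s)_x}{x!}$ and $\frac{\Gamma(N-x+\beta+1)}{\Gamma(N-x+1)}=\Gamma(\beta+1)\frac{(\beta+1)_{N-x}}{(N-x)!}$, valid for $0\le x\le N$. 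Since $Q_{\vec n}$ is monic of degree $\sz{n}$, I would expand it in the falling-factorial basis, $Q_{\vec n}(x)=\sum_{j=0}^{\sz{n}}b_j\,\frac{x!}{(x-j)!}$ with $b_{\sz{n}}=1$. Then $\frac{x!}{(x-j)!}\,\frac{(s)_x}{x!}=(s)_j\,\frac{(s+j)_{x-j}}{(x-j)!}$, and after shifting the summation index the inner sum over $x$ is exactly a Chu--Vandermonde sum~\eqref{Chu-Van}, equal to $(s)_j\,\frac{(s+j+\beta+1)_{N-j}}{(N-j)!}$. This produces the closed form $\Gamma(s)\Gamma(\beta+1)\sum_{j=0}^{\sz{n}}\frac{b_j\,(s)_j\,(s+j+\beta+1)_{N-j}}{(N-j)!}$.

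Next I would peel off the predicted gamma factor. Using $(s+j+\beta+1)_{N-j}=(s+j+\beta+1)_{\sz{n}-j}\,(s+\sz{n}+\beta+1)_{N-\sz{n}}$ and $(s+\sz{n}+\beta+1)_{N-\sz{n}}=\Gamma(s+N+\beta+1)/\Gamma(s+\sz{n}+\beta+1)$, the closed form becomes
\[
\int_0^\infty\tilde Q_{\vec n}(x)\frac{\Gamma(x+s)}{\Gamma(x+1)}\operatorname d \nu_N(x)=\Gamma(\beta+1)\,\frac{\Gamma(s)\,\Gamma(s+N+\beta+1)}{\Gamma(s+\sz{n}+\beta+1)}\,p_{\sz{n}}(s),
\]
where $p_{\sz{n}}(s)=\sum_{j=0}^{\sz{n}}\frac{b_j\,(s)_j\,(s+j+\beta+1)_{\sz{n}-j}}{(N-j)!}$ is a polynomial in $s$ of degree at most $\sz{n}$. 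The orthogonality relations now enter: since $\frac{\Gamma(x+k+\alpha_i+1)}{\Gamma(x+1)}=\frac{\Gamma(x+s)}{\Gamma(x+1)}$ with $s=\alpha_i+k+1$, they state precisely that the left-hand side vanishes at the $\sz{n}$ values $s\in\{\alpha_i+1,\dots,\alpha_i+n_i\}$, $i\in\{1,\dots,p\}$. Each of these exceeds $-1$, so the gamma prefactor is finite and nonzero there; hence $p_{\sz{n}}$ vanishes at them, and since they are pairwise distinct (the AT hypothesis $\alpha_i-\alpha_j\notin\Z$) and $\deg p_{\sz{n}}\le\sz{n}$, it follows that $p_{\sz{n}}(s)=\mathcal Q_{\vec n}\prod_{i=1}^p(\alpha_i+1-s)_{n_i}$ for a constant $\mathcal Q_{\vec n}$.

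Finally I would determine $\mathcal Q_{\vec n}$ by evaluating at $s=-\sz{n}-\beta$: there the factor $(s+j+\beta+1)_{\sz{n}-j}$ equals $(j-\sz{n}+1)_{\sz{n}-j}$, which contains a zero factor for every $j<\sz{n}$, so only the top term survives and $p_{\sz{n}}(-\sz{n}-\beta)=\frac{b_{\sz{n}}}{(N-\sz{n})!}\,(-\sz{n}-\beta)_{\sz{n}}=\frac{(-1)^{\sz{n}}(\beta+1)_{\sz{n}}}{(N-\sz{n})!}$. Comparing with $\mathcal Q_{\vec n}\prod_{i=1}^p(\alpha_i+\sz{n}+\beta+1)_{n_i}$ and using $\Gamma(\beta+1)(\beta+1)_{\sz{n}}=\Gamma(\sz{n}+\beta+1)$ yields the stated identity. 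I expect the only delicate point to be the bookkeeping in the middle step — arranging the falling-factorial expansion so that a single Chu--Vandermonde summation does all the work and the factor $(s+\sz{n}+\beta+1)_{N-\sz{n}}$ detaches cleanly; the rest is the verbatim discrete analogue of the proof of Proposition~\ref{JPII_MT}, ready to be fed into the inverse transform of Theorem~\ref{DMT_INV}.
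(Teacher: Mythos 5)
Your proposal is correct and follows essentially the same route as the paper's proof: expand $Q_{\vec n}$ in the falling-factorial basis (your $x!/(x-j)!$ is just $(-1)^j(-x)_j$, matching the paper's $(-x)_k$ expansion), collapse the inner sum with Chu--Vandermonde, detach the factor $\Gamma(s+N+\beta+1)/\Gamma(s+\sz{n}+\beta+1)$, identify the residual degree-$\sz{n}$ polynomial from the orthogonality conditions at $s=\alpha_i+k+1$, and fix the constant by evaluating at $s=-\sz{n}-\beta$, where only the top coefficient survives. The only differences are cosmetic (you keep $\Gamma(\beta+1)$ outside the polynomial and recombine it at the end via $\Gamma(\beta+1)(\beta+1)_{\sz{n}}=\Gamma(\sz{n}+\beta+1)$); note also that your derivation yields $\prod_{i=1}^p(\alpha_i+1-s)_{n_i}$, which is the correct product --- the exponent $\sz{n}-1$ in the displayed statement is a typo, as confirmed by Theorem~\ref{HII_IR}.
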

\begin{proof}
To proceed, we will expand \( Q_{\vec{n}}(x) = \sum_{k=0}^{\sz{n}} Q_{\vec{n}}[k] (-x)_k \) in terms of \( (-x)_k = (-1)^k \Gamma(x+1)/\Gamma(x-k+1) \). This allows us to express the integrand as follows:
\begin{align*}
	\sum_{k=0}^{\sz{n}} (-1)^k Q_{\vec{n}}[k] \frac{\Gamma(x+s)}{\Gamma(x-k+1)} \frac{\Gamma(N-x+\beta+1)}{\Gamma(N-x+1)}.
\end{align*}
After evaluating the integral, we obtain 
\begin{align*}
	\sum_{k=0}^{\sz{n}} (-1)^k Q_{\vec{n}}[k] \sum_{l=k}^N \frac{\Gamma(l+s)}{(l-k)!} \frac{\Gamma(N-l+\beta+1)}{(N-l)!}.
\end{align*}
We can simplify the inner sum using the Chu--Vandermonde identity \eqref{Chu-Van}, which gives:
\begin{align*}
	\Gamma(s+k)\Gamma(\beta+1) \sum_{l=0}^{N-k} \frac{(s+k)_l}{l!} \frac{(\beta+1)_{N-k-l}}{(N-k-l)!} = \Gamma(s+k)\Gamma(\beta+1) \frac{(s+k+\beta+1)_{N-k}}{(N-k)!}.
\end{align*}
Thus, the integral can be written as
\begin{align} \label{HII_MT_COEFF}
	\Gamma(\beta+1) \frac{\Gamma(s)\Gamma(s+N+\beta+1)}{\Gamma(s+\sz{n}+\beta+1)} \sum_{k=0}^{\sz{n}} \frac{(-1)^k Q_{\vec{n}}[k]}{(N-k)!} (s)_k (s+k+\beta+1)_{\sz{n}-k} = \frac{\Gamma(s)\Gamma(s+N+\beta+1)}{\Gamma(s+\sz{n}+\beta+1)} p_{\sz{n}}(s),
\end{align}
where \( p_{\sz{n}}(s) \) is a polynomial of degree at most \(\sz{n}\). 

From the orthogonality conditions, we infer that \( p_{\sz{n}}(s) \) must vanish at the \(\sz{n}\) values \(\alpha_i + k\) for \(k \in \{0, \ldots, n_i - 1\}\) and \(i\in \{1, \ldots, p\}\). Thus, we have:
\begin{align*}
	p_{\sz{n}}(s) = \mathcal{Q}_{\vec{n}} \prod_{i=1}^p (\alpha_i + 1 - s)_{n_i},
\end{align*}
for some constant \(\mathcal{Q}_{\vec{n}} \in \mathbb{R}\).

We can determine the constant \(\mathcal{Q}_{\vec{n}}\) by setting \(s = -\sz{n} - \beta\) in \eqref{HII_MT_COEFF}. Since $Q_{\vec{n}}(x)$ is monic, and thus \(Q_{\vec{n}}[\sz{n}] = (-1)^{\sz{n}}\), it follows that:
\begin{align*}
	\Gamma(\beta+1) \frac{(-\sz{n}-\beta)_{\sz{n}}}{(N-\sz{n})!} = \mathcal{Q}_{\vec{n}} \prod_{i=1}^p (\alpha_i + \beta + \sz{n} + 1)_{n_i}.
\end{align*}
Therefore,
\(\displaystyle
	\mathcal{Q}_{\vec{n}} = \frac{ (-1)^{\sz{n}} \Gamma(\sz{n} + \beta + 1)}{(N-\sz{n})! \prod_{i=1}^p (\alpha_i + \beta + \sz{n} + 1)_{n_i}}\).
\end{proof}

\begin{rem}
	If we make use of formula \eqref{JPTypeII}, we can now provide a constructive proof for the expression of the type II Hahn polynomials in \eqref{HahnTypeII} through the following result.
\end{rem}

\begin{pro}
Let us write \( Q_{\vec{n}}(x) = \sum_{k=0}^{\sz{n}} Q_{\vec{n}}[k] (-x)_k \) and \( P_{\vec{n}}(x) = \sum_{k=0}^{\sz{n}} P_{\vec{n}}[k] x^k \). Then the relationship between the coefficients \( Q_{\vec{n}}[k] \) and \( P_{\vec{n}}[k] \) can be given by:
\begin{align*}
Q_{\vec{n}}[k] = (-1)^{k} \frac{(N-k)!}{(N-\sz{n})!} P_{\vec{n}}[k].
\end{align*}
\end{pro}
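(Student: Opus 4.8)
The plan is to read off the coefficient identity by putting side by side the two weighted Mellin transforms that have already been computed. Recall from the proof of Proposition~\ref{JPII_MT} (after factoring out the gamma quotient in \eqref{JPII_MT_COEFF} and inserting the value of $\mathcal{P}_{\vec n}$) that
\[
\sum_{k=0}^{\sz{n}} P_{\vec{n}}[k]\,(s)_k\,(s+k+\beta+1)_{\sz{n}-k}
=\frac{(-1)^{\sz{n}}\,\Gamma(\sz{n}+\beta+1)}{\Gamma(\beta+1)\prod_{i=1}^p(\alpha_i+\beta+\sz{n}+1)_{n_i}}\prod_{i=1}^p(\alpha_i+1-s)_{n_i},
\]
while the proof of Proposition~\ref{HII_MT} (from \eqref{HII_MT_COEFF} together with the value of $\mathcal{Q}_{\vec n}$) gives
\[
\sum_{k=0}^{\sz{n}}\frac{(-1)^k Q_{\vec{n}}[k]}{(N-k)!}\,(s)_k\,(s+k+\beta+1)_{\sz{n}-k}
=\frac{(-1)^{\sz{n}}\,\Gamma(\sz{n}+\beta+1)}{(N-\sz{n})!\,\Gamma(\beta+1)\prod_{i=1}^p(\alpha_i+\beta+\sz{n}+1)_{n_i}}\prod_{i=1}^p(\alpha_i+1-s)_{n_i}.
\]
The two right-hand sides differ only by the factor $1/(N-\sz{n})!$, so subtracting (after rescaling the second display by $(N-\sz{n})!$) yields
\[
\sum_{k=0}^{\sz{n}}\left(\frac{(-1)^k Q_{\vec{n}}[k]}{(N-k)!}-\frac{P_{\vec{n}}[k]}{(N-\sz{n})!}\right)(s)_k\,(s+k+\beta+1)_{\sz{n}-k}=0
\]
identically in $s$.

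The conclusion then follows as soon as one knows the family $\{(s)_k\,(s+k+\beta+1)_{\sz{n}-k}\}_{k=0}^{\sz{n}}$ is linearly independent, and this is the only point that genuinely needs an argument, since every member is a \emph{monic} polynomial of the same degree $\sz{n}$, so one cannot match leading terms. I would prove independence by an evaluation induction on the index: the $k$-th member carries the factor $(s)_k=s(s+1)\cdots(s+k-1)$, hence vanishes at $s=0,-1,\dots,-(k-1)$. Assuming the coefficients $c_0,\dots,c_{m-1}$ of a vanishing combination are already known to be zero, write $(s)_k=(s)_m\,(s+m)_{k-m}$ for $k\ge m$, divide the relation by $(s)_m$, and evaluate at $s=-m$; every term with $k>m$ dies and one is left with $c_m\,(\beta+1)_{\sz{n}-m}$, forcing $c_m=0$ because $(\beta+1)_{\sz{n}-m}\neq 0$ for $\beta>-1$. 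Running this from $m=0$ upward shows all coefficients vanish, whence $\tfrac{(-1)^k Q_{\vec{n}}[k]}{(N-k)!}=\tfrac{P_{\vec{n}}[k]}{(N-\sz{n})!}$ for every $k$, which is exactly the asserted identity.

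A second, more self-contained route avoids the Mellin machinery: define $\tilde Q(x):=\sum_{k=0}^{\sz{n}}(-1)^k\frac{(N-k)!}{(N-\sz{n})!}P_{\vec{n}}[k]\,(-x)_k$, observe it is monic of degree $\sz{n}$ (the $k=\sz{n}$ term is $(-x)_{\sz{n}}=(-1)^{\sz{n}}x^{\sz{n}}+\cdots$ since $P_{\vec{n}}[\sz{n}]=1$), and verify the type~II Hahn orthogonality directly. Using $(-x)_k/\Gamma(x+1)=(-1)^k/\Gamma(x-k+1)$, shifting the summation index by $k$, and applying the Chu--Vandermonde identity \eqref{Chu-Van} twice, the pairing $\sum_{x=0}^N\tilde Q(x)\frac{\Gamma(x+k+\alpha_i+1)}{\Gamma(x+1)}\frac{\Gamma(\beta+N-x+1)}{\Gamma(N-x+1)}$ collapses to $\frac{\Gamma(N+k+\alpha_i+\beta+2)}{(N-\sz{n})!}\int_0^1 P_{\vec{n}}(x)\,x^{k+\alpha_i}(1-x)^\beta\,\d x$, which vanishes for $k\in\{0,\dots,n_i-1\}$ by the Jacobi--Piñeiro orthogonality \eqref{ortogonalidadTipoIIContinua}. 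Uniqueness of the monic type~II Hahn polynomial in the AT setting then gives $\tilde Q=Q_{\vec{n}}$, again the coefficient identity. In either approach the only real obstacle is the linear‑independence step (first route) or keeping the two Chu--Vandermonde reductions in order (second route); everything else is the bookkeeping already carried out in the proofs of Propositions~\ref{JPII_MT} and~\ref{HII_MT}.
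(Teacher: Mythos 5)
Your first route is essentially the paper's own proof: the paper simply juxtaposes equations \eqref{JPII_MT_COEFF} and \eqref{HII_MT_COEFF}, notes that both sums equal the same multiple of \(\prod_{i=1}^p(\alpha_i+1-s)_{n_i}\), and reads off the coefficients. The one step the paper leaves implicit --- that the polynomials \((s)_k(s+k+\beta+1)_{\sz{n}-k}\), \(k=0,\dots,\sz{n}\), form a basis, so that equal sums force equal coefficients --- is exactly the step you isolate and prove, and your evaluation induction at \(s=-m\) after dividing by \((s)_m\) is correct (the pivot \((\beta+1)_{\sz{n}-m}\) is nonzero since \(\beta>-1\)). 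Your second route (defining \(\tilde Q\) from the Jacobi--Piñeiro coefficients, verifying the Hahn type~II orthogonality via Chu--Vandermonde and the Beta integral, then invoking uniqueness of the monic type~II polynomial) is a genuinely different and self-contained argument that the paper does not give; it buys independence from the Mellin-transform propositions at the cost of redoing the summation bookkeeping, and it also checks out.
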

\begin{proof}
It follows from the proofs of Propositions \ref{JPII_MT} and \ref{HII_MT}, specifically from equations \eqref{JPII_MT_COEFF} and \eqref{HII_MT_COEFF}, that we have the following identities:
\begin{align*}
	\sum_{k=0}^{\sz{n}} \frac{(-1)^k Q_{\vec{n}}[k]}{(N-k)!} (s)_k (s+k+\beta+1)_{\sz{n}-k} 
	&= \frac{(-1)^{\sz{n}} (\beta+1)_{\sz{n}}}{(N-\sz{n})! \prod_{i=1}^p (\alpha_i+\beta+\sz{n}+1)_{n_i}} \prod_{i=1}^p (\alpha_i+1-s)_{n_i} \\
	&= \frac{1}{(N-\sz{n})!} \sum_{k=0}^{\sz{n}} P_{\vec{n}}[k] (s)_k (s+k+\beta+1)_{\sz{n}-k}.
\end{align*}
We therefore have the stated result.
\end{proof} 

An application of the inversion theorem for the Mellin transform in the discrete setting, as stated in Theorem \ref{DMT_INV}, leads to the following result. Note the duality with the integral representation for the type I linear form as shown in Theorem \ref{HI_IR}.

\begin{teo} \label{HII_IR}
 For \( x \in\mathcal{L}_N\), the function
\begin{align*}
\tilde{Q}_{\vec{n}}(x) = Q_{\vec{n}}(x) \frac{\Gamma(N-x+\beta+1)}{\Gamma(N-x+1)}
\end{align*}
is given by
\begin{align*}
 \tilde{Q}_{\vec{n}}(x) = \frac{(-1)^{\sz{n}} \Gamma(\sz{n}+\beta+1)}{(N-\sz{n})! \prod_{i=1}^p (\alpha_i+\beta+\sz{n}+1)_{n_i}} \int_{\mathcal{C}} \frac{\Gamma(s) \Gamma(s+N+\beta+1)}{\Gamma(s+\sz{n}+\beta+1)} \prod_{i=1}^p (\alpha_i+1-s)_{n_i} \frac{\Gamma(x+1)}{\Gamma(x+s+1)} \frac{ \operatorname d s }{2 \pi \ii },
 \end{align*}
 where \( \mathcal{C}\) is an anticlockwise contour enclosing \( [-N,0]\) once. 
\end{teo}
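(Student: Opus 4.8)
The plan is to obtain Theorem~\ref{HII_IR} as an immediate consequence of Proposition~\ref{HII_MT} together with the discrete Mellin inversion of Theorem~\ref{DMT_INV}, in exact analogy with how Theorem~\ref{JPII_IMT} follows from Proposition~\ref{JPII_MT}. First I would note that, restricted to the lattice $\mathcal{L}_N=\{0,\ldots,N\}$, the function
\[
\tilde{Q}_{\vec{n}}(x)=Q_{\vec{n}}(x)\,\frac{\Gamma(N-x+\beta+1)}{\Gamma(N-x+1)}
\]
is a genuine function on $\mathcal{L}_N$, since for $k\in\{0,\ldots,N\}$ the ratio $\Gamma(N-k+\beta+1)/\Gamma(N-k+1)=(\beta+1)_{N-k}/(N-k)!$ is finite; thus $\tilde{Q}_{\vec{n}}$ is exactly of the type to which $\mathcal{M}_N$ and its inverse apply.

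Next, Proposition~\ref{HII_MT} supplies the discrete Mellin transform in closed form: $(\mathcal{M}_N\tilde{Q}_{\vec{n}})(s)$ equals the $s$-independent constant $\dfrac{(-1)^{\sz{n}}\Gamma(\sz{n}+\beta+1)}{(N-\sz{n})!\prod_{i=1}^p(\alpha_i+\beta+\sz{n}+1)_{n_i}}$ times $\dfrac{\Gamma(s)\Gamma(s+N+\beta+1)}{\Gamma(s+\sz{n}+\beta+1)}\prod_{i=1}^p(\alpha_i+1-s)_{n_i}$. Substituting this into the inversion formula
\[
\tilde{Q}_{\vec{n}}(x)=\int_{\mathcal{C}}(\mathcal{M}_N\tilde{Q}_{\vec{n}})(s)\,\frac{\Gamma(x+1)}{\Gamma(x+s+1)}\,\frac{\d s}{2\pi\ii},\qquad x\in\mathcal{L}_N,
\]
of Theorem~\ref{DMT_INV}, with $\mathcal{C}$ an anticlockwise contour enclosing $[-N,0]$ once, and pulling the constant prefactor out of the integral, gives exactly the asserted representation.

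The only step requiring a moment's care is checking that $\mathcal{C}$ encircles all the poles of the integrand, but this is automatic. For $x\in\mathcal{L}_N$ the factor $\Gamma(x+1)/\Gamma(x+s+1)=x!/\bigl(\Gamma(s)(s)_{x+1}\bigr)$ cancels the $\Gamma(s)$ coming from the Mellin transform, while $\Gamma(s+N+\beta+1)/\Gamma(s+\sz{n}+\beta+1)=(s+\sz{n}+\beta+1)_{N-\sz{n}}$ and $\prod_{i=1}^p(\alpha_i+1-s)_{n_i}$ are polynomials, since $N\geq\sz{n}$; hence the integrand is a polynomial times $1/(s)_{x+1}$, whose poles all lie in $\{0,-1,\ldots,-x\}\subseteq[-N,0]$, exactly the set enclosed by $\mathcal{C}$, as in the proof of Theorem~\ref{DMT_INV}. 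So there is no genuine obstacle: the proof is essentially a one-line substitution, and it exhibits the announced duality with the type~I integral representation of Theorem~\ref{HI_IR}. As a further byproduct, collecting the residues of this contour integral at $s=-k$ inside $\mathcal{C}$ --- just as in the corollaries following Theorems~\ref{LII_IMT} and~\ref{JPII_IMT} --- would recover the hypergeometric expression~\eqref{HahnTypeII} for the type~II Hahn polynomials.
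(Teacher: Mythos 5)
Your proposal is correct and matches the paper's (implicit) argument exactly: the paper derives Theorem~\ref{HII_IR} by applying the discrete Mellin inversion of Theorem~\ref{DMT_INV} to the transform computed in Proposition~\ref{HII_MT}, which is precisely your one-line substitution, and your check that all poles of the integrand lie in $[-N,0]$ fills in the only detail the paper leaves unstated. You also correctly read the exponent in Proposition~\ref{HII_MT} as $n_i$ rather than the typographical $\sz{n}-1$, consistent with its proof.
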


This result, along with the residue theorem, leads to the conclusion below.

\begin{teo}
\label{teo:hahn_2}
 For \( x \in\mathcal{L}_N\), the function
	\(\tilde{Q}_{\vec{n}}(x) 
	\) 
	is given by
\begin{align}\label{eq:integral_representation_hahn_2}
\tilde{Q}_{\vec{n}}(x) =
 (-1)^{\sz{n}} \frac{\Gamma(N+\beta+1)}{(N-\sz{n})!} \frac{\prod_{i=1}^p (\alpha_i+1)_{n_i}}{\prod_{i=1}^p (\alpha_i+\beta+\sz{n}+1)_{n_i}}\pFq{p+2}{p+1}{-x,-\beta-|\vec n|,\vec \alpha+\vec n+\vec e_p}{-\beta-N,\vec \alpha+\vec e_p}{1}.
\end{align}
\end{teo}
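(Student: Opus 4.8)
The plan is to evaluate the contour integral of Theorem~\ref{HII_IR} by the residue theorem, exactly as was done for the weighted type~II Laguerre and Jacobi--Piñeiro polynomials in the corollaries to Theorems~\ref{LII_IMT} and~\ref{JPII_IMT}. So I would start from the representation of Theorem~\ref{HII_IR},
\[
\tilde Q_{\vec n}(x)=\frac{(-1)^{\sz{n}}\Gamma(\sz{n}+\beta+1)}{(N-\sz{n})!\prod_{i=1}^p(\alpha_i+\beta+\sz{n}+1)_{n_i}}\int_{\mathcal{C}}g(s)\,\frac{\operatorname d s}{2\pi\ii},\qquad g(s)=\frac{\Gamma(s)\Gamma(s+N+\beta+1)}{\Gamma(s+\sz{n}+\beta+1)}\prod_{i=1}^p(\alpha_i+1-s)_{n_i}\,\frac{\Gamma(x+1)}{\Gamma(x+s+1)},
\]
where $\mathcal{C}$ is anticlockwise and encloses $[-N,0]$ once.

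Next I would locate the poles of $g$ inside $\mathcal{C}$. Since $\beta>-1$, the poles of $\Gamma(s+N+\beta+1)$ lie strictly to the left of $-N$ and hence outside $\mathcal{C}$; the reciprocal gamma $1/\Gamma(s+\sz{n}+\beta+1)$ and the polynomial $\prod_{i=1}^p(\alpha_i+1-s)_{n_i}$ are entire. So the only candidate poles inside $\mathcal{C}$ are those of $\Gamma(s)$, at $s=0,-1,\dots,-N$. However, for $x\in\mathcal{L}_N$ and any integer $k$ with $x+1\le k\le N$ the argument $x-k+1$ is a non-positive integer, so $1/\Gamma(x+s+1)$ has a simple zero at $s=-k$ which cancels the simple pole of $\Gamma(s)$ there. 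Hence $g$ has simple poles inside $\mathcal{C}$ exactly at $s=-k$ for $k=0,1,\dots,x$, and the residue theorem turns the integral into a finite sum of $x+1$ terms.

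Then I would compute the residues and rewrite them in Pochhammer form. Using $\operatorname{Res}_{s=-k}\Gamma(s)=(-1)^k/k!$, the identity $\Gamma(x+1)/\Gamma(x-k+1)=(-1)^k(-x)_k$, the relations $\Gamma(N+\beta+1-k)=\Gamma(N+\beta+1)/\bigl((-1)^k(-\beta-N)_k\bigr)$ and $\Gamma(\sz{n}+\beta+1-k)=\Gamma(\sz{n}+\beta+1)/\bigl((-1)^k(-\beta-\sz{n})_k\bigr)$, and $(\alpha_i+k+1)_{n_i}=(\alpha_i+1)_{n_i}\,(\alpha_i+n_i+1)_k/(\alpha_i+1)_k$, the two factors $(-1)^k$ cancel and the residue at $s=-k$ collapses to
\[
\frac{\Gamma(N+\beta+1)}{\Gamma(\sz{n}+\beta+1)}\prod_{i=1}^p(\alpha_i+1)_{n_i}\cdot\frac{(-x)_k\,(-\beta-\sz{n})_k\,\prod_{i=1}^p(\alpha_i+n_i+1)_k}{(-\beta-N)_k\,\prod_{i=1}^p(\alpha_i+1)_k}\,\frac{1}{k!}.
\]
Summing over $k=0,\dots,x$ — equivalently over all $k\ge 0$, since $(-x)_k$ already truncates the series at $k=x$ as $x$ is a non-negative integer — gives the ${}_{p+2}F_{p+1}$ at argument $1$; multiplying by the prefactor in the first display, the $\Gamma(\sz{n}+\beta+1)$ factors cancel and one recovers exactly~\eqref{eq:integral_representation_hahn_2}.

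The main obstacle is the pole-counting step: one must argue carefully that every pole of $\Gamma(s)$ lying strictly between $-N$ and $-x-1$ is annihilated by a zero of $1/\Gamma(x+s+1)$, so that the residue sum truncates at $k=x$ (consistent with $\tilde Q_{\vec n}$ being, on the lattice, a polynomial in $x$ of degree at most $N$); once this is settled, what remains is the routine but bookkeeping-heavy gamma-to-Pochhammer simplification needed to land on the stated normalization constant.
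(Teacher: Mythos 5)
Your proposal is correct and follows essentially the same route as the paper: evaluate the contour integral of Theorem~\ref{HII_IR} by residues at the simple poles of \(\Gamma(s)\) inside \(\mathcal{C}\), convert the gamma ratios to Pochhammer symbols, and observe that \((-x)_k\) truncates the series (the paper sums over all \(l=0,\dots,N\) and notes the terms with \(l>x\) vanish, which is the same truncation you establish via the zeros of \(1/\Gamma(x+s+1)\)). The only cosmetic slip is that the residue actually carries four factors of \((-1)^k\) rather than two, but their product is still \(+1\), so your final expression and the resulting \({}_{p+2}F_{p+1}\) are exactly right.
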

\begin{proof}
 It follows from Theorem \ref{HII_IR} and the residue theorem that
\begin{align*}
\begin{aligned}
 \tilde{Q}_{\vec{n}}(x) &= \frac{(-1)^{\sz{n}} \Gamma(\sz{n}+\beta+1)}{(N-\sz{n})! \prod_{i=1}^p (\alpha_i+\beta+\sz{n}+1)_{n_i}} \sum_{l=0}^N \frac{(-1)^l}{l!} \frac{\Gamma(N+\beta+1-l)}{\Gamma(\sz{n}+\beta+1-l)} \prod_{i=1}^p (\alpha_i+1+l)_{n_i} (x-l+1)_l 
 ,
\end{aligned}
\end{align*}
which we may write as
\begin{align}
    \tilde{Q}_{\vec{n}}(x) = (-1)^{\sz{n}} \frac{\Gamma(N+\beta+1)}{(N-\sz{n})!} \frac{\prod_{i=1}^p (\alpha_i+1)_{n_i}}{\prod_{i=1}^p (\alpha_i+\beta+\sz{n}+1)_{n_i}} \sum_{l=0}^N  \prod_{i=1}^p 
\frac{ (n_i+\alpha_i+1)_l}{(\alpha_i+1)_l}  \frac{(-\sz{n}-\beta)_l}{(-N-\beta)_l} \frac{(-x)_l}{l!}.
\end{align}
This coincides with the expression in \eqref{eq:integral_representation_hahn_2}, because both series get truncated after $l=x\in\mathcal{L}_N$ due to the Pochhammer symbol $(-x)_l$. 
\end{proof}

\section*{Conclusions and Outlook}
The paper focuses on two main objectives. Firstly, it establishes explicit hypergeometric expressions for the type I Hahn multiple orthogonal polynomials, as demonstrated in Theorem \ref{HahnTypeITheorem}. This result is pivotal for understanding the properties and applications of these polynomials.
Secondly, 
applying
the residue theorem and the Mellin transform, the paper derives contour integral representations for various families of muliple orthogonal polynomials within the classical families. Specifically, it provides integral representations for the Laguerre of the first kind, Jacobi--Piñeiro, and both types I and II Hahn multiple orthogonal polynomials. These integral formulas constitute significant contributions to the field, leading to explicit hypergeometric representations that enhance the understanding and computational aspects of these polynomial families.

We are currently working on providing explicit hypergeometric expressions and integral representations for the discrete descendants of the Hahn polynomials in the Askey scheme. These include the multiple Meixner polynomials of the first and second kinds, as well as the Kravchuk and Charlier polynomials.

An unexplored frontier lies in mixed multiple orthogonal polynomials, which extend the classical families of multiple orthogonal polynomials discussed in this paper to the mixed multiple case. Currently, explicit hypergeometric expressions for mixed Hahn multiple orthogonal polynomials remain elusive, as do complex contour integral representations for these polynomials. This area presents an open avenue for future research, promising new insights into the interplay between different orthogonal polynomial families and their applications.

\section*{Acknowledgments} AB acknowledges Centre for Mathematics of the University of Coimbra funded
	by the Portuguese Government through FCT/MCTES, DOI: 10.54499/UIDB/00324/2020.

AF and JEFD acknowledge Center for Research and Development
	in Mathematics and Applications (CIDMA) from University of Aveiro funded by the Portuguese Foundation
	for Science and Technology (FCT) through projects DOI: 10.54499/UIDB/04106/2020 and DOI:
	10.54499/UIDP/04106/2020. Additionally, JEFD acknowledges PhD contract
	DOI: 10.54499/UI/BD/152576/2022 from FCT.

JEFD and MM acknowledge research project [PID2021- 122154NB-I00], \emph{Ortogonalidad y Aproximación con Aplicaciones en Machine Learning y Teoría de la Probabilidad} funded by \href{https://doi.org/10.13039/501100011033}{MICIU/AEI/10.13039/501100011033} and by "ERDF A Way of making Europe”

TW acknowledges PhD project 3E210613 funded by BOF KU Leuven.


\begin{thebibliography}{99}
 
 
\bibitem{andrews}
G. E. Andrews, R. Askey \& R. Roy,
\emph{Special Functions},
Cambridge University Press, Cambridge, 1999.


\bibitem{Arvesu}
J. Arvesu, J. Coussement \& W. Van Assche,
\emph{Some discrete multiple orthogonal polynomials},
Journal of Computational and Applied Mathematics \textbf{153} (2003)
19–45.




\bibitem{AskeyII}
B. Beckerman, J. Coussement \& W. Van Assche,
\emph{Multiple Wilson and Jacobi--Piñeiro polynomials}, Journal of Approximation Theory 
\textbf{132} (2005)
155–181.




\bibitem{CMOPI}
A. Branquinho, J.E.F. Díaz, A. Foulquié-Moreno \& M. Mañas,
\emph{Classical multiple orthogonal polynomials for arbitrary number of weights and their explicit representation}, \hyperref{https://arxiv.org/abs/2404.13958}{}{}{arXiv:2404.13958}.


\bibitem{HahnI}
A. Branquinho, J.E.F. Díaz, A. Foulquié-Moreno \& M. Mañas,
\emph{Hahn multiple orthogonal polynomials of type I: Hypergeometric expressions},
Journal of Mathematical Analysis and Applications \textbf{528} (2023) 1277471.




\bibitem{HS:JP-L1}
A. Branquinho, J.E.F. Díaz, A. Foulquié-Moreno \& M. Mañas,
\emph{Hypergeometric expressions for type I Jacobi--Piñeiro orthogonal polynomials with arbitrary number of weights}, Proceedings of the American Mathematical Society Series B, \textbf{11} (2024)
200–210.
















\bibitem{Ismail}
M. E. H. Ismail,
\textit{Classical and Quantum Orthogonal Polynomials in One Variable},
Cambridge University Press, Cambridge, 2005.





\bibitem{KP}
D.B. Karp \& E.G. Prilepkina,
\emph{Extensions of Karlsson--Minton summation
theorem and some consequences of the first
Miller–Paris transformation}, Integral Transforms and Special Functions \textbf{29} (2018)
955–970.







\bibitem{nikishin_sorokin}
E. M. Nikishin \& V. N. Sorokin,
\emph{Rational Approximations and Orthogonality},
Translations of Mathematical Monographs \textbf{92},
American Mathematical Society, Providence, 1991



\bibitem{Rakha-Rathie}
M.A. Rakha \& A.K. Rathie,
\emph{Certain reduction and transformation formulas for the Kampé de Fériet function},
Communications of the Korean Mathematical Society, \textbf{37} (2) (2022)
473–496.




\bibitem{slater}
L. J. Slater,
\emph{Generalized Hypergeometric Functions},
Cambridge University Press, Cambridge, 2008.




\bibitem{Srivastava}
 H. M. Srivastava \& P. W. Karlsson, 
 \emph{Multiple Gaussian Hypergeometric Series}, Ellis Horwood Limited, John Wiley \& Sons,
Chichester, 1985.


\bibitem{VAWolfs} 
W. Van Assche \& T. Wolfs,
\emph{Multiple orthogonal polynomials associated with the exponential integral},
Studies in Applied Mathematics \textbf{151} (2023)
411–3449.



\bibitem{Wolfs} T. Wolfs, 
\emph{Applications of multiple orthogonal polynomials with hypergeometric moment generating functions}, Advances in Applied Mathematics \textbf{158} (2024) 102709.




\end{thebibliography}
\end{document}